\tikzset{
	>=stealth',
	punktchain/.style={
		rectangle,
		rounded corners,
		draw=black, thick,
		minimum height=3em,
		text centered,
		on chain},
	line/.style={draw, thick, <-},
	element/.style={
		tape,
		top color=white,
		bottom color=blue!50!black!60!,
		minimum width=8em,
		draw=blue!40!black!90, very thick,
		text width=10em,
		minimum height=3.5em,
		text centered,
		on chain},
	every join/.style={->, thick,shorten >=1pt},
	decoration={brace},
	tuborg/.style={decorate},
	tubnode/.style={midway, right=2pt},
}
\numberwithin{equation}{section} 
\def\H{\ensuremath{\mathbb{H}}}
\def\Z{\ensuremath{\mathbb{Z}}}
\def\ch{\mathop{\mathrm{ch}}\nolimits}
\def\Coh{\mathop{\mathrm{Coh}}\nolimits}
\def\codim{\mathop{\mathrm{codim}}\nolimits}
\def\dim{\mathop{\mathrm{dim}}\nolimits}
\def\Ext{\mathop{\mathrm{Ext}}\nolimits}
\def\Hom{\mathop{\mathrm{Hom}}\nolimits}
\def\id{\mathop{\mathrm{id}}\nolimits}
\def\Id{\mathop{\mathrm{Id}}\nolimits}
\def\im{\mathop{\mathrm{im}}\nolimits}
\def\min{\mathop{\mathrm{min}}\nolimits}
\def\rk{\mathop{\mathrm{rk}}}
\def\Spec{\mathop{\mathrm{Spec}}}
\def\supp{\mathop{\mathrm{supp}}}
\def\Stab{\mathop{\mathrm{Stab}}\nolimits}
\def\Db{\mathrm{D}^{b}}
\def\Cone{\mathrm{Cone}}
\newtheorem*{rep@theorem}{\rep@title}
\newcommand{\newreptheorem}[2]{%
\newenvironment{rep#1}[1]{%
 \def\rep@title{#2 \ref{##1}}%
 \begin{rep@theorem}}%
 {\end{rep@theorem}}}
\newtheorem{Thm}{Theorem}[section]
\newtheorem{Prop}[Thm]{Proposition}
\newtheorem{PropDef}[Thm]{Proposition and Definition}
\newtheorem{Lem}[Thm]{Lemma}
\newtheorem{Cor}[Thm]{Corollary}
\newtheorem{thm-int}{Theorem}
\theoremstyle{definition}
\newtheorem{Def-s}[Thm]{Definition}
\newtheorem{Def}[Thm]{Definition}
\newtheorem{Rem}[Thm]{Remark}
\def\H{\ensuremath{\mathbb{H}}}
\def\Z{\ensuremath{\mathbb{Z}}}
\def\cC{\ensuremath{\mathcal C}}
\def\cD{\ensuremath{\mathcal D}}
\def\cE{\ensuremath{\mathcal E}}
\def\cF{\ensuremath{\mathcal F}}
\def\cG{\ensuremath{\mathcal G}}
\def\cI{\ensuremath{\mathcal I}}
\def\cJ{\ensuremath{\mathcal J}}
\def\cL{\ensuremath{\mathcal L}}
\def\cO{\ensuremath{\mathcal O}}
\def\cT{\ensuremath{\mathcal T}}
\def\iff{\; \Longleftrightarrow \;}
\def\RHom{\mathrm{RHom}}
\def\sF{\mathsf{F}}
\def\sG{\mathsf{G}}
\begin{document}

\title[Fully faithful functors, skyscraper sheaves, and birational equivalence]{Fully faithful functors, skyscraper sheaves, and birational equivalence}

\author{Chunyi Li}
\address{C. L.:
Mathematics Institute, University of Warwick,
Coventry, CV4 7AL,
United Kingdom}
\email{C.Li.25@warwick.ac.uk}
\urladdr{https://sites.google.com/site/chunyili0401/}

\author{Xun Lin}
\address{X. L.:
Max Planck Institute for Mathematics, Vivatsgasse 7, 53111 Bonn, Germany
}
\email{xlin@mpim-bonn.mpg.de}
\urladdr{}

\author{Xiaolei Zhao}
\address{X. Z.:
Department of Mathematics \\
South Hall 6607 \\
University of California \\
Santa Barbara, CA 93106, USA
}
\email{xlzhao@math.ucsb.edu}
\urladdr{https://sites.google.com/site/xiaoleizhaoswebsite/}

\keywords{}

\begin{abstract} 
Let $X$ and $Y$ be two smooth projective varieties such that there is a fully faithful exact functor from $\Db(X)$ to $\Db(Y)$. We show that $X$ and $Y$ are birational equivalent if the functor maps one skyscraper sheaf to a skyscraper sheaf. Further assuming that $X$ and $Y$ are of the same dimension, we show that if $X$ has ample canonical bundle and $H^0(X ,K_X)\neq 0$, or if $X$ is a K3 surface with Picard number one, then $Y$ is birational to a Fourier--Mukai partner of $X$.
\end{abstract}
\date{\today}

\maketitle

\setcounter{tocdepth}{1}
\tableofcontents

\maketitle

\section{Introduction}

The bounded derived category of coherent sheaves $\Db(X)$ of a smooth projective variety $X$  contains a lot of geometric information of the underlying variety. One fascinating aspect of its study is the connection to birational properties. For example, \cite[Conjecture 1.2]{Kawamata:D-equiv_K-equiv} predicts that two smooth projective varieties are derived equivalent if they are K-equivalent. \cite[Conjecture 1.1]{Kuz:fourfold} says that a cubic fourfold is rational if and only if its so-called Kuznetsov component is equivalent to $\Db(S)$ of a K3 surface $S$.

More recently, the question of when derived equivalence implies birational equivalence has been raised in \cite{lieblich2022derived}. In this paper, we provide a result in this direction.

\begin{Thm}[Theorem \ref{thm:pointtoglobal}]
    Let $X$ and $Y$ be two irreducible smooth projective varieties with a fully faithful exact functor $\sF\colon \Db(X)\rightarrow \Db(Y)$. Assume that there exists a closed point $x\in X$ such that $\sF(\cO_x)=\cO_y$ for some closed point $y\in Y$. Then $X$ and $Y$ are birational.
\end{Thm}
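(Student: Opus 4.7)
The plan is to realize $\sF$ as a Fourier--Mukai transform $\Phi_{\cE}$, identify $\cE$ near the fiber $\{x\}\times Y$ with the structure sheaf of the graph of some morphism $\phi\colon U\to Y$ defined on an open neighborhood $U$ of $x$, and verify that $\phi$ is birational. As a preliminary observation, full faithfulness applied to $\cO_x$ and $\cO_y$ gives isomorphisms
\[
\wedge^{\bullet} T_{X,x} \cong \Ext^\bullet(\cO_x,\cO_x) \cong \Ext^\bullet(\cO_y,\cO_y) \cong \wedge^{\bullet} T_{Y,y},
\]
and looking at the top nonzero degree forces $\dim X = \dim Y =: n$. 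Orlov's representability theorem then produces a kernel $\cE \in \Db(X\times Y)$ with $\sF \cong \Phi_{\cE}$, and the hypothesis translates to $Li_x^*\cE \cong \cO_y$, where $i_x\colon \{x\}\times Y \hookrightarrow X\times Y$ denotes the fiber inclusion.

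The main technical step, and what I expect to be the hardest part, is to show that $\cE$ is quasi-isomorphic to a single coherent sheaf, flat over $X$, in an open neighborhood of $\{x\}\times Y$. Concentration of $Li_x^*\cE$ in degree $0$ together with upper semicontinuity of derived restriction lets me apply Nakayama's lemma to each cohomology sheaf $\cH^j(\cE)$ with $j\neq 0$ to conclude that $\cH^j(\cE)=0$ in a neighborhood of $\{x\}\times Y$, so $\cE$ is a sheaf there. The vanishing of $L^{j}i_x^*\cE$ for $j>0$ is then equivalent to $X$-flatness of this sheaf along the fiber over $x$, and flatness spreads to an open set of the form $U\times Y$ by openness of the flat locus combined with properness of the projection $X\times Y\to X$.

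On $U\times Y$, $\cE$ is a flat family of coherent sheaves on $Y$ whose fiber at $x$ is $\cO_y$ and hence has Hilbert polynomial $1$; by constancy of the Hilbert polynomial under flat specialization, every fiber must be a length-one skyscraper $\cO_{y'}$. The universal property of $\Hilb^1(Y)=Y$ then yields a morphism $\phi\colon U\to Y$ with $\sF(\cO_{x'}) = \cO_{\phi(x')}$ for all $x'\in U$. Injectivity of $\phi$ follows from full faithfulness: the left adjoint $\sG$ of $\sF$ satisfies $\sG\sF\cong\id$, so $\sG(\cO_{\phi(x')})\cong \cO_{x'}$, and $\phi(x_1)=\phi(x_2)$ therefore forces $x_1=x_2$. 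Since $\dim U = n = \dim Y$ and $Y$ is irreducible, the image $\phi(U)$ is constructible of dimension $n$ and thus contains a dense open subset of $Y$; this yields the desired birational equivalence between $X$ and $Y$.
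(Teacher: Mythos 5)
Your proposal takes a genuinely different route from the paper's main proof. The paper's Theorem \ref{thm:pointtoglobal} is proved in two stages that never pass through a Fourier--Mukai kernel: Proposition \ref{prop:pointtoopen} uses classical generators, $\RHom$ computations with hyperplane sections, and the characterization of skyscrapers via $\RHom(\cO_X(-m), -)$ to produce open sets $U\subset X$, $V\subset Y$ on which $\sF$ induces a bijection of points that is a homeomorphism; Proposition \ref{prop:UVisom} then builds a ``restricted functor'' $\sF|_U\colon\Coh(U)\to\Coh(V)$ and applies Gabriel--Rosenberg reconstruction to get an isomorphism $U\cong V$. Your proof, by contrast, is a kernel argument: represent $\sF$ by $\cE\in\Db(X\times Y)$, show $\cE$ is a flat family of sheaves near $\{x\}\times Y$, use constancy of the Hilbert polynomial to see all nearby fibers are skyscrapers, and map to $\Hilb^1(Y)=Y$. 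This is closer in spirit to the paper's \emph{alternative} proof in Section 4 (Theorem \ref{thm:mainYminusBsKtoX}), which also works with the kernel $P_R$ and its fiberwise restrictions, but there the ampleness of $K_X$ is used precisely to guarantee the skyscraper condition on a whole open set $Y\setminus\mathrm{Bs}|K_Y|$ before the kernel is brought in; your argument instead spreads out the hypothesis from a single point via flatness, which is the right way to handle the general statement.

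There is one step in your writeup that is stated too quickly and deserves flagging. You claim that concentration of $Li_x^*\cE$ in degree $0$, together with upper semicontinuity and Nakayama, yields $\cH^j(\cE)=0$ near $\{x\}\times Y$ for all $j\neq 0$. This is immediate for $j>0$: the top cohomology of $Li_x^*\cE$ is $\cH^{j_{\max}}(\cE)\otimes\cO_{\{x\}\times Y}$, so Nakayama plus properness of $X\times Y\to X$ kills $\cH^{j}(\cE)$ for $j>0$ in a neighborhood of the fiber. For $j<0$ it is not a direct Nakayama argument: in the spectral sequence $\mathrm{Tor}_{-p}(\cH^q(\cE),\cO_{\{x\}\times Y})\Rightarrow\cH^{p+q}(Li_x^*\cE)$, the term $\cH^{-1}(\cE)\otimes\cO_{\{x\}\times Y}$ could in principle be cancelled by a differential from $\mathrm{Tor}_2(\cH^0(\cE),\cO_{\{x\}\times Y})$. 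To close this one needs an extra input: for instance, first note that $\mathrm{Tor}_1(\cH^0(\cE),\cO_{\{x\}\times Y})$ survives to $E_\infty$ and must vanish, then invoke rigidity of Tor over a regular local ring (equivalently, rigidity of Koszul homology, since $\cO_{\{x\}\times Y}$ is cut out by a regular sequence) to conclude all higher Tors of $\cH^0(\cE)$ vanish, which removes the potentially cancelling differential and makes Nakayama applicable to $\cH^{-1}(\cE)$, and then iterate downward. Alternatively cite the standard kernel lemma to this effect (variants appear in Bridgeland and in Huybrechts' Fourier--Mukai book). The conclusion you want is true, but ``upper semicontinuity $+$ Nakayama'' alone does not cover the negative degrees. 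The remainder of the argument --- the passage to $\Hilb^1(Y)$, injectivity from $\sG\sF\cong\id$, and the char-$0$ conclusion that an injective dominant morphism of $n$-folds is birational --- is correct, though you should note in passing that your injectivity is set-theoretic and the last step uses characteristic zero.
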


The proof of this theorem roughly consists of three steps. First, we show that there exists an open neighborhood $U$ of $x$, such that each skyscraper sheaf in $U$ is sent to a skyscraper sheaf in $Y$. Second, we show that this induces a bijection between closed points in $U$ and closed points in an open subset $V\subset Y$, compatible with the Zariski topology. Finally, we prove that this induces a birational equivalence between $X$ and $Y$.

As an application, we have the following result.

\begin{Thm}[Theorem \ref{thm:K3pic1}]
  Let $S$ be a smooth projective K3 surface of Picard rank one. Then a smooth projective surface $T$ admits $\Db(S)$ as a semiorthogonal factor if and only if $T$ is birational to a Fourier--Mukai partner of $S$.
\end{Thm}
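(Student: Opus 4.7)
The backward direction is short. If $T$ is birational to a Fourier--Mukai partner $S'$ of $S$, then $S'$ is a K3 surface and hence its own minimal model; by uniqueness of minimal models for surfaces of non-negative Kodaira dimension, $T$ is obtained from $S'$ by a finite sequence of point blowups. Orlov's blowup formula therefore realizes $\Db(S')$ as a semiorthogonal factor of $\Db(T)$, and composing with the derived equivalence $\Db(S) \simeq \Db(S')$ yields the desired embedding.

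For the forward direction, let $\sF\colon \Db(S) \hookrightarrow \Db(T)$ be a fully faithful exact functor. By Orlov's representability theorem, $\sF$ is a Fourier--Mukai transform with kernel $\cK \in \Db(S \times T)$, and it induces an injective isometry $\sF_*\colon H^*_{\alg}(S, \Z) \hookrightarrow H^*_{\alg}(T, \Z)$ with respect to the Euler pairing. The overall plan is to reduce to Theorem \ref{thm:pointtoglobal}: we aim to produce a Fourier--Mukai partner $S'$ of $S$, an equivalence $\Phi\colon \Db(S') \xrightarrow{\sim} \Db(S)$, a closed point $x' \in S'$, a closed point $y \in T$, and an integer $n$ such that $\sF(\Phi(\cO_{x'})) \simeq \cO_y[n]$. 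Given this data, Theorem \ref{thm:pointtoglobal} applied to the composition $[-n] \circ \sF \circ \Phi$ yields that $S'$ is birational to $T$, which is exactly the claim.

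The construction of $(S', \Phi, x')$ rests on the Picard rank one hypothesis. Since $H^*_{\alg}(S, \Z)$ has rank three with an explicit description of its isotropic vectors, the image of $\sF_*$ is highly constrained. Using Bridgeland stability conditions on $\Db(S)$ and the Bayer--Macr\`i theory of moduli spaces of stable objects on K3 surfaces, we plan to choose a primitive isotropic Mukai vector $w \in H^*_{\alg}(S, \Z)$ and a generic stability condition $\sigma$ on $\Db(S)$ such that the moduli space $S' := M_\sigma(w)$ is a smooth projective K3 surface---automatically a Fourier--Mukai partner of $S$---with universal family on $S' \times S$ providing an equivalence $\Phi$ whose value on $\cO_{x'}$ is the $\sigma$-stable object of class $w$ parametrized by $x'$. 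In the model case where $T$ is a blowup of some Fourier--Mukai partner $S'$ of $S$ and $\sF$ factors through this blowup, the universal family giving $\Phi$ is the expected candidate.

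The main obstacle is to show that $\sF(\Phi(\cO_{x'}))$ is in fact a shifted skyscraper sheaf on $T$ for some $x'$, not merely an object with the correct Mukai vector. The plan to handle this is to exploit the Hom-orthogonality and the Calabi--Yau $2$ Serre duality inherited by the admissible subcategory $\sF(\Db(S)) \subset \Db(T)$: these strongly constrain the point-like objects of $\Db(T)$ lying in the image, and should be combined with the classification of point-like objects underlying the proof of Theorem \ref{thm:pointtoglobal} to identify $\sF(\Phi(\cO_{x'}))$, possibly after composing $\Phi$ with a suitable autoequivalence of $\Db(S')$, with an actual shifted skyscraper $\cO_y[n]$. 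Once such a triple $(x', y, n)$ is produced, the reduction above via Theorem \ref{thm:pointtoglobal} completes the proof.
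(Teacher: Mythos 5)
Your proposal identifies the right target (reduce to Theorem \ref{thm:pointtoglobal} by producing a Fourier--Mukai partner $S'$ and an equivalence $\Phi$ so that the composed embedding sends a skyscraper to a skyscraper), and the ``if'' direction via Orlov's blowup formula matches the paper. But the ``only if'' direction as written has a genuine gap, and the gap sits precisely where you flag ``the main obstacle'': you never actually show that any object of the form $\sF(\Phi(\cO_{x'}))$ is a shifted skyscraper on $T$, and the order in which you try to build the data makes this essentially unprovable.

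The logical order matters. You propose to first pick a primitive isotropic Mukai vector $w$ and a stability condition $\sigma$, form the moduli K3 $S' = M_\sigma(w)$, and then hope that for some $x'$ the image $\sF(\Phi(\cO_{x'}))$ is a skyscraper. There is no mechanism in your argument forcing this: a fully faithful $\sF$ will generally send a $\sigma$-stable object of class $w$ to some complicated object of $\Db(T)$, and Hom-orthogonality plus CY2 Serre duality for the admissible subcategory only tell you that the images are ``point-like'' inside the subcategory $\sF(\Db(S))$, not that they are actual skyscrapers of $T$. What is missing is the opposite starting point: one must first locate a skyscraper sheaf $\cO_y$ on $T$ that already lies in $\sF(\Db(S))$, and only then transport its preimage. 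The paper does this by the argument of Corollary \ref{cor:pgneq0bir}: since $S$ is K3, $\mathrm{HH}_{-2}(S)\cong H^0(S,K_S)\neq 0$ injects as a direct summand into $\mathrm{HH}_{-2}(T)\cong H^0(T,K_T)$, so $\mathrm{Bs}|K_T|\subsetneq T$; and by \cite[Theorem 1.2]{OK:nonsod} and \cite[Lemma 5.3]{Pirozhkov:delPezzoSOD} (or \cite[Theorem 4.5]{linxun:remarkhochschild}) the support of $^\perp(\sF(\Db(S)))$ is contained in $\mathrm{Bs}|K_T|$. Hence for any $y\notin\mathrm{Bs}|K_T|$ one has $\cO_y=\sF(E)$ for some $E\in\Db(S)$, and $E$ is automatically simple, semirigid, of primitive class. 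Only \emph{then} does the Bayer--Bridgeland stability theory (Lemma \ref{lem:k3pointobj}) enter: it supplies an autoequivalence $\Phi\colon\Db(S)\to\Db(S')$ to a Fourier--Mukai partner with $\Phi(E)=\cO_p$, and $\sF\circ\Phi^{-1}$ then satisfies the hypothesis of Theorem \ref{thm:pointtoglobal}. Without the Hochschild-homology/base-locus input you cannot produce a single skyscraper of $T$ in the image of $\sF$, so your reduction never gets off the ground; and the Mukai vector $w$ should be read off from this $E$, not chosen in advance.
\end{document}
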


This verifies  \cite[Conjecture 5.8]{Kuz:fractionalCY} ``generically" in dimension $2$. Note that by \cite[Proposition 5.2.5]{abuaf2017compact}, this conjecture is not true in dimension $4$. We need to assume that the K3 surface is of Picard rank one as we are using crucially a result of Bayer and Bridgeland \cite[Proposition 3.15 and 6.3]{K3Pic1}. It would be an interesting question to see how to generalize this to the case of higher Picard rank.

As another application, thanks to the results in \cite{Pirozhkov:delPezzoSOD} and \cite{linxun:remarkhochschild}, we have the following result which is relevant to \cite[Question 2.6]{Kawamata:birg}.

\begin{Cor}[Corollary \ref{cor:pgneq0bir}]
    Let $X$ and $Y$ be two smooth projective varieties with the same dimension. Assume that the canonical divisor $K_X$ is ample and $H^0(X,K_X)\neq 0$. If there exists a fully faithful exact functor $\sF\colon \Db(X)\rightarrow \Db(Y)$, then $Y$ is birational to $X$.
\end{Cor}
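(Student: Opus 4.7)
The plan is to verify the hypothesis of Theorem~\ref{thm:pointtoglobal}: to produce a closed point $x\in X$ and a closed point $y\in Y$ with $\sF(\cO_x)\cong \cO_y$. Since $\sF$ remains fully faithful after composition with any shift, it suffices to find $x$, $y$, and an integer $m$ with $\sF(\cO_x)\cong \cO_y[m]$; replacing $\sF$ by $\sF[-m]$ then satisfies the hypothesis of the theorem, and the conclusion follows.

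Set $n=\dim X=\dim Y$. Being a fully faithful exact functor between bounded derived categories of smooth projective varieties, $\sF$ is of Fourier--Mukai type and admits both adjoints, so its essential image $\cC=\sF(\Db(X))$ is an admissible subcategory of $\Db(Y)$, giving a semiorthogonal decomposition $\Db(Y)=\langle \cA,\cC\rangle$. For every closed point $x\in X$ the object $\sF(\cO_x)$ is simple with $\dim\Ext^i(\sF\cO_x,\sF\cO_x)=\binom{n}{i}$, and it satisfies $S_{\cC}(\sF\cO_x)\cong \sF(\cO_x)[n]$ for the intrinsic Serre functor $S_{\cC}$ of $\cC$, since $\cO_x\otimes K_X\cong \cO_x$. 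Moreover, by the Hochschild--Kostant--Rosenberg isomorphism and the injectivity of $\sF_{*}$ on Hochschild homology (which follows from additivity of $HH_*$ along semiorthogonal decompositions), the nonzero section $\omega\in H^0(X,K_X)=HH_{-n}(X)$ transports to a nonzero class $\omega'\in HH_{-n}(Y)=H^0(Y,K_Y)$.

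Next I would invoke the admissible-subcategory rigidity results of \cite{Pirozhkov:delPezzoSOD} and \cite{linxun:remarkhochschild}. Combined with the ampleness of $K_X$ and the nonzero global canonical section $\omega'$ on $Y$ produced above, these results should force, for at least one closed point $x\in X$, the identification $\sF(\cO_x)\cong \cO_y[m]$ for some $y\in Y$ and $m\in \Z$. The key input from $\omega'$ is the natural transformation $\id\to S_Y[-n]$ on $\Db(Y)$, whose compatibility with $\sF$ and the analogous transformation on $\Db(X)$ coming from $\omega$ upgrades the intrinsic point-object property of $\sF(\cO_x)$ in $\cC$ to an identification with a shifted skyscraper in $\Db(Y)$ itself. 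Once such a pair $(x,y)$ is in hand, Theorem~\ref{thm:pointtoglobal} applied to $\sF[-m]$ yields that $X$ and $Y$ are birational.

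The main obstacle is precisely this final identification step. A priori the Calabi--Yau condition on $\sF(\cO_x)$ holds only for the intrinsic Serre functor $S_{\cC}$, not for $S_Y$, and the Bondal--Orlov reconstruction theorem does not apply directly to $Y$ because $K_Y$ is not assumed (anti)ample. The cited papers provide exactly the Hochschild-theoretic input that, under the hypothesis $K_X$ ample and $H^0(X,K_X)\neq 0$ and together with the canonical section $\omega$ as the geometric data, closes this gap.
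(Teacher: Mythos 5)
Your plan correctly reduces to producing a pair of closed points $x\in X$, $y\in Y$ with $\sF(\cO_x)\cong\cO_y[m]$, and your preparatory observations (Hochschild homology transport of the canonical section, the Serre-functor identity $S_\cC(\sF\cO_x)\cong\sF(\cO_x)[n]$, and the $\binom{n}{i}$ Ext-dimensions) are all correct. But you have the direction of the argument backwards, and the step you flag as ``the main obstacle'' is indeed a genuine gap that your proposal does not close. Trying to show that $\sF(\cO_x)$ is a shifted skyscraper in $\Db(Y)$ requires a Bondal--Orlov-type recognition statement on $Y$, but $K_Y$ is not assumed ample, so that tool is unavailable exactly where you need it --- as you yourself observe. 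The vague appeal to ``admissible-subcategory rigidity'' from \cite{Pirozhkov:delPezzoSOD} and \cite{linxun:remarkhochschild} does not supply such a statement.

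The paper's argument resolves this by running the reduction in the opposite direction, so that Bondal--Orlov is applied on the side where $K$ is ample, namely $X$. Concretely: the Hochschild homology inclusion $H^0(X,K_X)\hookrightarrow H^0(Y,K_Y)$ shows $\mathrm{Bs}|K_Y|\subsetneq Y$; the results you cite (\cite{Pirozhkov:delPezzoSOD}, \cite{linxun:remarkhochschild}, together with \cite{OK:nonsod}) are used to show that the support of $^\perp\!\bigl(\sF(\Db(X))\bigr)$ lies inside $\mathrm{Bs}|K_Y|$, so that any skyscraper $\cO_y$ with $y\notin\mathrm{Bs}|K_Y|$ already lies in the essential image of $\sF$, say $\cO_y=\sF(E)$. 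The Serre functor compatibility (\cite[Lemma~2.7]{Kuz:fractionalCY}) then yields $S_X(E)\cong E[n]$, and since $(E,E)^\bullet=(\cO_y,\cO_y)^\bullet$, the reconstruction argument of \cite[Proposition~2.2]{Bondal-Orlov} applied on $X$ --- where $K_X$ is ample --- forces $E\cong\cO_p[m]$. This is what your outline is missing: the key object to test is $E=\sF^!(\cO_y)$ (equivalently $\sF^*(\cO_y)$) for $y$ outside the base locus, not $\sF(\cO_x)$ for an arbitrary $x$, precisely because the ample canonical bundle lives on $X$ and not on $Y$.
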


\subsection*{Convention}
Throughout the paper, we work over an algebraically closed field $k$ of characteristic $0$. A variety is a reduced separated scheme of finite type over $\Spec k$.

\subsection*{Acknowledgement}
We wish to thank Arend Bayer and Laura Pertusi for their useful comments.

C.L.\  is supported by the Royal Society URF$\backslash$R1$\backslash$201129 “Stability condition and application in algebraic geometry”. X.Z.\ is partially supported by the NSF grant DMS-2101789, and NSF FRG grant DMS-2052665. This paper was written when the third author was attending the Junior Trimester program ``Algebraic geometry: derived categories, Hodge theory, and Chow groups'' at the Hausdorff Institute for Mathematics in Bonn, funded by the Deutsche Forschungsgemeinschaft (DFG, German Research Foundation) under Germany Excellence Strategy – EXC-2047/1 – 390685813. We would like to thank this institution for the warm hospitality.

\section{Support of Semiorthogonal Factor}

\subsection{Adjoint functors}
Let $\cC$ and $\cD$ be a pair of categories, $\sF\colon \cD\to\cC$ and $\sG\colon \cC\to\cD$ be functors. Assume that $\sF$ is left adjoint to $\sG$, namely, for all objects $X$ in $\cC$ and $Y$ in $\cD$, there is a natural bijective 
\begin{equation*}
    \Phi_{YX}\colon \Hom_{\cC}(\sF Y,X)\cong \Hom_{\cD}(Y,\sG X).
\end{equation*}
The map $\Phi$ is natural in the sense that for all morphisms $f\colon X\to X'$ in $\cC$, $g\colon Y'\to Y$ in $\cD$, the following diagram commutes:
\begin{equation}\label{eqadjdiagram}
    \begin{tikzcd}
               \Hom_{\cC}(\sF Y,X)\ar[equal]{rr}{\Phi_{XY}}\ar{d}[swap]{\Hom(\sF g,f)} && \Hom_{\cD}(Y,\sG X)\ar{d}{\Hom(g,\sG f)}\\ 
               \Hom_{\cC}(\sF Y',X')\ar[equal]{rr}{\Phi_{X'Y'}} && \Hom_{\cD}(Y',\sG X')
    \end{tikzcd}
\end{equation}

We will use the following basic facts about adjoint functors.
\begin{Lem}\label{lem:adjfacts}
    Let $X$ and $X'$ be objects in $\cC$, $Y$ and $Y'$ be objects in $\cD$, $f\colon X\to X'$, $h\colon \sF Y\to X$, $h'\colon \sF Y'\to X'$ be morphisms in $\cC$, and $g\colon Y\to Y'$ be morphism in $\cD$. Then
    \begin{enumerate}
        \item [\rm(1)] $\Phi_{YX'}(f\circ h)=\sG(f)\circ \Phi_{YX}(h)$.
        \item[\rm(2)]$\Phi_{YX'}(h'\circ \sF g)= \Phi_{Y'X'}(h')\circ g$.
        \item[\rm(3)] In diagram \ref{eq2comsqs}, the square on the left commutes if and only if the square on the right commutes.
        \begin{equation}\label{eq2comsqs}
    \begin{tikzcd}
               \sF Y\ar{r}{h}\ar{d}[swap]{\sF (g)} & X\ar{d}{f} && Y\ar{rr}{\Phi_{YX}(h)}\ar{d}[swap]{ g} && \sG X\ar{d}{\sG (f)} \\ 
               \sF Y' \ar{r}{h'} & X' && Y' \ar{rr}{\Phi_{Y'X'}(h')}&& \sG X'
    \end{tikzcd}
\end{equation}
    \end{enumerate}
\end{Lem}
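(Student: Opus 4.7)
The plan is to derive all three statements directly from the naturality square \eqref{eqadjdiagram}. Parts (1) and (2) are obtained by specializing one of the two morphisms in \eqref{eqadjdiagram} to an identity and then chasing a single element around the square, while part (3) follows formally from (1), (2), and the bijectivity of $\Phi$.

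For (1), I would take $Y' = Y$ and $g = \id_Y$ in \eqref{eqadjdiagram}, keeping the given $f\colon X \to X'$. Then the left vertical arrow sends $h \in \Hom_\cC(\sF Y, X)$ to $f\circ h$ and the right vertical arrow sends $\beta \in \Hom_\cD(Y, \sG X)$ to $\sG(f)\circ \beta$. Chasing $h$ down-then-right produces $\Phi_{YX'}(f\circ h)$, while right-then-down produces $\sG(f)\circ \Phi_{YX}(h)$, and commutativity gives (1). For (2), note first that the morphism $g\colon Y\to Y'$ appearing in the statement points opposite to the $g\colon Y'\to Y$ of \eqref{eqadjdiagram}, so I would relabel by swapping the roles of $Y$ and $Y'$ in the diagram. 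Setting $X' = X$ and $f = \id_X$ in the relabelled square and chasing $h'\in \Hom_\cC(\sF Y', X')$ then yields $\Phi_{YX'}(h'\circ \sF g) = \Phi_{Y'X'}(h')\circ g$.

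For (3), I would use that $\Phi_{YX'}$ is a bijection. The left square in \eqref{eq2comsqs} commutes iff $f\circ h = h'\circ \sF(g)$ as elements of $\Hom_\cC(\sF Y, X')$, which (by bijectivity) is equivalent to $\Phi_{YX'}(f\circ h) = \Phi_{YX'}(h'\circ \sF g)$. Applying (1) to the left-hand side and (2) to the right-hand side rewrites this as $\sG(f)\circ \Phi_{YX}(h) = \Phi_{Y'X'}(h')\circ g$, which is precisely commutativity of the right square in \eqref{eq2comsqs}.

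Since the lemma is a standard unpacking of what it means for $\Phi$ to be natural in both arguments, there is essentially no obstacle; the only point requiring care is matching the orientation of $g$ in part (2) with the convention of \eqref{eqadjdiagram}, handled by swapping $Y$ and $Y'$ before specializing.
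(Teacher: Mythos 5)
Your proof is correct and follows essentially the same route as the paper: specialize the naturality square \eqref{eqadjdiagram} by setting one of the morphisms to an identity for (1) and (2), and combine with bijectivity of $\Phi$ for (3). The only difference is that you make explicit the relabeling needed to reconcile the orientation of $g$ in part (2) with the convention of \eqref{eqadjdiagram}, which the paper leaves implicit.
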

\begin{proof}
    \rm{(1)} follows by applying $Y'=Y$ and $g=\id_Y$ in \eqref{eqadjdiagram}. \rm{(2)} is by applying $X=X'$ and $f=\id_{X'}$. \rm{(3)} follows by \rm{(1)} and \rm{(2)} immediately.
\end{proof}

\subsection{Semiorthogonal factor}
Let $\cT$ be a $k$-linear triangulated category. We will be interested in \emph{admissible subcategories} of $\cT$, in other words when the inclusion functor $\alpha\colon\cD\hookrightarrow\cT$  has left adjoint $\alpha^*$ and right adjoint $\alpha^!$. For example, let $X$ and $Y$ be two smooth projective varieties. By \cite[Theorem 1.1]{BondalVdBergh:Generators}, an exact fully faithful functor from $\Db(X)\to\Db(Y)$ realizes $\Db(X)$ as an admissible subcategory of $\Db(Y)$.\\

Assume that $\cD$ is a n admissible subcategory of $\cT$, then $\cT$ admits \emph{semiorthogonal decompositions} as $\langle \cD^\perp,\cD\rangle=\langle \cD,^\perp\!\cD\rangle$. Here $\cD^\perp$ (resp. $^\perp\!\cD$) is the full triangulated subcategory of $\cT$ consisting of objects $\{F\;|\;\RHom(D,F)=0$ for all $D\in\cD\}$ (resp. $\RHom(F,D)=0$).

By  a \emph{semiorthogonal decomposition} (SOD) of $\cT$:
	\begin{equation*}
	\cT = \langle \cD_1, \dots, \cD_m \rangle,
	\end{equation*}
	 we mean a sequence of full triangulated subcategories $\cD_1, \dots, \cD_m$ of $\cT$ such that: 
	\begin{enumerate}
		\item $\RHom(F, G) = 0$, for all $F \in \cD_i$, $G \in \cD_j$ and $i>j$;
		\item For any $F \in \cT$, there is a sequence of morphisms
		\begin{equation*}  
		0 = F_m \to F_{m-1} \to \cdots \to F_1 \to F_0 = F,
		\end{equation*}
		such that $\pi_i(F):=\mathrm{Cone}(F_i \to F_{i-1}) \in \cD_i$ for $1 \leq i \leq m$. 
	\end{enumerate}
The subcategories $\cD_i$ are called the \emph{components} or \emph{SOD factors} of the decomposition.

Assume that $\alpha\colon\cD\hookrightarrow\cT$ is an admissible subcategory of $\cT$. The \emph{left mutation functor} through $\cD$ is the functor $\mathsf{L}_{\cD}:\cT\to \cT$ defined by the canonical distinguished triangle
\begin{equation*}
    \alpha_i\alpha_i^!\xrightarrow{\eta_i}\mathsf{id}\rightarrow \mathsf{L}_{\cD_i},
\end{equation*}
where $\eta_i$ denotes the counit of the adjunction. The image of $\mathsf L_{\cD}$ is  in $\cD^\perp$.

In complete analogy, one defines the \emph{right mutation functor} through $\cD_i$ as the functor $\mathsf{R}_{\cD_i}$ defined by the canonical distinguished triangle
\begin{equation*}
    \mathsf{R}_{\cD_i}\rightarrow\mathsf{id}\xrightarrow{\epsilon_i} \alpha_i\alpha_i^*,
\end{equation*}
where $\epsilon_i$ is the unit of the adjunction.

\subsection{Definition of support}

\begin{Def}\label{def:support}
Let $X$ be a smooth variety and $E\in\Db(X)$. The set-theoretic support $\supp(E)$ is defined to be:
\begin{equation}\label{eq:supp}
   \mathrm{supp}(E):=\{p\text{ closed point on } X\;|\;\RHom(E,\cO_p)\neq 0\}.
\end{equation}
For $i\in\Z$, we write $$\mathrm{supp}^i(E):=\{p\text{ closed point on } X\;|\;\Hom(E,\cO_p[i])\neq 0\}.$$

It is an easy consequence of the ext spectral sequence that 
\begin{equation}\label{eqsuppi}
    \mathrm{supp}^i(E)\subset \bigcup_{j\in \Z} \mathrm{supp}^i(\H^j(E)[-j]).
\end{equation}
\end{Def}

\begin{Lem} \label{lem:suppandgenerator}
Let $\sF\colon \Db(X)\rightarrow \Db(Y)$ be an exact functor. Assume that a subcategory $\cT$ of $\Db(X)$ has a classical generator $G$ in the sense of \cite[Section 3.1]{Rouquier:dimensionoftricat}. Then for every object $E$ in $\cT$, we have $$\supp(E)\subset \supp(G)\text{ and }\supp(\sF(E))\subset \supp(\sF(G)).$$
\end{Lem}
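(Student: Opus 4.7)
The plan is to observe that for each fixed point, the non-support condition cuts out a thick triangulated subcategory, so being generated (in the thick sense) by $G$ automatically forces the support containment.

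\textbf{Step 1: Package non-support as a thick subcategory.} Fix a closed point $p\in X$ and consider the full subcategory
\[
\cT_p := \{E\in \Db(X)\,:\,\RHom(E,\cO_p)=0\}.
\]
I would verify that $\cT_p$ is closed under shifts (obvious), cones (from the long exact sequence obtained by applying $\Hom(-,\cO_p[*])$ to a triangle), and direct summands (since $\Hom$ is additive). Thus $\cT_p$ is a thick triangulated subcategory of $\Db(X)$.

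\textbf{Step 2: Deduce the first inclusion.} Suppose $p\notin \supp(G)$, i.e.\ $G\in \cT_p$. Since $G$ is a classical generator of $\cT$, every object of $\cT$ lies in the smallest thick triangulated subcategory containing $G$, which by Step 1 is contained in $\cT_p$. Hence $E\in \cT_p$, i.e.\ $p\notin \supp(E)$. Taking contrapositives yields $\supp(E)\subset\supp(G)$.

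\textbf{Step 3: Deduce the second inclusion via $\sF$.} For a closed point $q\in Y$ define
\[
\cT'_q := \{E\in \Db(X)\,:\,\RHom(\sF(E),\cO_q)=0\}.
\]
Because $\sF$ is exact, it sends triangles to triangles and direct summands to direct summands, so the same formal argument as in Step 1 shows $\cT'_q$ is thick in $\Db(X)$. Now if $q\notin \supp(\sF(G))$ then $G\in \cT'_q$, and the classical-generator property of $G$ in $\cT$ forces $E\in \cT'_q$, i.e.\ $q\notin \supp(\sF(E))$. This gives $\supp(\sF(E))\subset\supp(\sF(G))$.

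\textbf{Main obstacle.} There is no real obstacle: the only subtlety is confirming that classical generation in Rouquier's sense (thick closure, in particular closed under direct summands) matches the closure properties of $\cT_p$ and $\cT'_q$. Once this is noted, both inclusions follow simultaneously from the single observation that vanishing of $\RHom(-,\cO_p)$ defines a thick subcategory, applied before and after $\sF$.
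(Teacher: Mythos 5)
Your proof is correct. The paper in fact states this lemma without proof (treating it as a standard fact), and your argument---that for each closed point the vanishing of $\RHom(-,\cO_p)$ (respectively $\RHom(\sF(-),\cO_q)$) cuts out a thick subcategory of $\Db(X)$, so classical generation by $G$ forces the containments---is exactly the natural one the authors are implicitly invoking. One small point worth spelling out: since $\cT$ need not itself be thick in $\Db(X)$, the cleanest phrasing is that $\cT_p\cap\cT$ (resp.\ $\cT'_q\cap\cT$) is a thick subcategory of $\cT$ containing $G$, hence equals $\cT$; this is equivalent to what you wrote.
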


We will use the following facts about the support of objects later.

\begin{Lem}[{\cite[Lemma 5.3]{Bridgeland-Maciocia:K3Fibrations}}]\label{lem:supportbasic}
    Let $E$ be an object and $x$ be a closed point on $X$. Assume that $x\in\supp^0(\H^i(E))\setminus \supp(\tau^{\geq i+1}(E))$, then $\Hom(E,\cO_x[-i])\neq 0$. In particular, $\supp(E)=\cup_{i\in\Z}\supp(\H^i(E))$.
\end{Lem}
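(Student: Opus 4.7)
The plan is to combine the two canonical truncation triangles for $E$ at cohomological degree $i$ with the hypothesis, isolating the contribution of $\H^i(E)$ to $\RHom(E,\cO_x)$ in the required degree. I would carry this out in two truncation steps followed by a straightforward deduction of the ``in particular'' statement.

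First I would apply the contravariant functor $\Hom(-,\cO_x[-i])$ to the triangle $\tau^{\leq i}E \to E \to \tau^{\geq i+1}E \to \tau^{\leq i}E[1]$. The hypothesis $x \notin \supp(\tau^{\geq i+1}E)$ kills every shift $\Hom(\tau^{\geq i+1}E,\cO_x[k])$, so the associated long exact sequence collapses to give an isomorphism $\Hom(E,\cO_x[-i]) \cong \Hom(\tau^{\leq i}E,\cO_x[-i])$. Next I would apply the same functor to the triangle $\tau^{\leq i-1}E \to \tau^{\leq i}E \to \H^i(E)[-i] \to \tau^{\leq i-1}E[1]$. Writing $F := \tau^{\leq i-1}E$, the spectral sequence $E_2^{p,q} = \Ext^p(\H^{-q}(F),\cO_x) \Rightarrow \Hom(F,\cO_x[p+q])$ has only contributions with $p \ge 0$ and $-q \le i-1$; on the total-degree diagonals $p+q=-i$ and $p+q=-i-1$ this forces $p \le -1$, which is impossible. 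Hence the two flanking Hom groups in the resulting long exact sequence vanish, giving $\Hom(\tau^{\leq i}E,\cO_x[-i]) \cong \Hom(\H^i(E),\cO_x)$. The right-hand side is non-zero by the assumption $x \in \supp^0(\H^i(E))$, so $\Hom(E,\cO_x[-i])\neq 0$ as required.

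For the ``in particular'' statement, the inclusion $\supp(E) \subset \bigcup_i \supp(\H^i(E))$ is immediate from \eqref{eqsuppi}, using that $\supp^k(\H^j(E)[-j]) \subset \supp(\H^j(E))$ for every $k$. For the reverse, given $x \in \supp(\H^j(E))$ for some $j$, let $i$ be the largest such integer. Then \eqref{eqsuppi} applied to $\tau^{\geq i+1}E$ shows $x \notin \supp(\tau^{\geq i+1}E)$, while $\supp^0(\H^i(E)) = \supp(\H^i(E))$ for a coherent sheaf (by Nakayama applied to the finitely generated stalk), so the first part applies and yields $x \in \supp(E)$. The only delicate bookkeeping is the spectral-sequence vanishing in step two; once the degree inequalities are set up correctly, the rest is entirely formal and I do not anticipate further obstacles.
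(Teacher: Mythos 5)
Your proof is correct and follows essentially the same route as the paper's: both isolate $\H^i(E)[-i]$ by stripping off $\tau^{\geq i+1}E$ via the support hypothesis and $\tau^{\leq i-1}E$ via $t$-structure orthogonality, just with the two truncations taken in the opposite order (and your spectral-sequence computation in step two is overkill, since $\Hom(\tau^{\leq i-1}E,\cO_x[k])=0$ for $k\le -i$ is an immediate $t$-structure axiom, $\tau^{\leq i-1}E$ lying in $\Db(X)^{\leq i-1}$ and $\cO_x[k]$ in $\Db(X)^{\geq i}$). Your explicit deduction of the \emph{in particular} clause, including the observation $\supp^0(\cF)=\supp(\cF)$ for a coherent sheaf $\cF$, correctly fills in a step the paper leaves implicit.
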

\begin{proof}
    Note that $$\Hom(E,\cO_x[-i])\cong\Hom(\tau^{\geq i}(E),\cO_x[-i])\cong\Hom(\tau^{\leq i}(\tau^{\geq i}(E)),\cO_x[-i]),$$
    where the first equality follows from the adjunction $\tau^{\geq i} \dashv \iota^{\geq i}$, and the second uses the assumption that $x\notin \supp(\tau^{\geq i+1}(E))$.
    The statement holds.
\end{proof}

\begin{Lem}\label{lem:jacob}
    Let $R$ be an integral domain with Jacobson radical zero. Let $L_\bullet=\{R^{\oplus a}\to R^{\oplus b}\to R^{\oplus c}\}$ be a sequence of $R$-module such that $L_\bullet\otimes \mathrm{Frac}(R)$ is exact. Then there exists a maximal ideal $\mathfrak m$ such that $L_\bullet \otimes (R/\mathfrak m)$ is exact.
\end{Lem}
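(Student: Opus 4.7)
The plan is to reduce exactness of $L_\bullet\otimes R/\mathfrak m$ to a rank statement about the two maps. Denote the sequence by $R^{\oplus a}\xrightarrow{\phi}R^{\oplus b}\xrightarrow{\psi}R^{\oplus c}$, and let $K=\mathrm{Frac}(R)$. Write $\rho=\mathrm{rk}_K(\phi\otimes K)$ and $\sigma=\mathrm{rk}_K(\psi\otimes K)$. Exactness of $L_\bullet\otimes K$ gives $\rho+\sigma=b$.

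Next I would look at the determinantal ideals $I_\rho(\phi)\subset R$ generated by the $\rho\times\rho$ minors of a matrix representing $\phi$, and $I_\sigma(\psi)$ generated by the $\sigma\times\sigma$ minors of $\psi$. Since these ranks are realized over $K$, both ideals are nonzero, and because $R$ is an integral domain, the product $I_\rho(\phi)\cdot I_\sigma(\psi)$ is also a nonzero ideal of $R$. Pick any nonzero element $r$ in this product. The Jacobson radical of $R$ vanishes by hypothesis, so there exists a maximal ideal $\mathfrak m\subset R$ with $r\notin\mathfrak m$; in particular $I_\rho(\phi)\not\subset\mathfrak m$ and $I_\sigma(\psi)\not\subset\mathfrak m$.

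Set $k=R/\mathfrak m$. Since some $\rho\times\rho$ minor of $\phi$ and some $\sigma\times\sigma$ minor of $\psi$ remain nonzero modulo $\mathfrak m$, one gets $\mathrm{rk}_k(\phi\otimes k)\geq \rho$ and $\mathrm{rk}_k(\psi\otimes k)\geq \sigma$. Conversely, the locus in $\mathrm{Spec}\,R$ where $\phi$ has rank at most $\rho$ is cut out by the vanishing of all $(\rho+1)\times(\rho+1)$ minors, hence is closed; since these minors already vanish at the generic point, they vanish on all of $\mathrm{Spec}\,R$, giving $\mathrm{rk}_k(\phi\otimes k)\leq\rho$, and similarly for $\psi$. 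Thus the ranks are preserved under reduction mod $\mathfrak m$.

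A final dimension count over the field $k$ yields
\begin{equation*}
\dim_k\ker(\psi\otimes k)=b-\sigma=\rho=\dim_k\mathrm{im}(\phi\otimes k),
\end{equation*}
and since $(\psi\otimes k)\circ(\phi\otimes k)=0$ follows from $\psi\circ\phi=0$, the inclusion $\mathrm{im}(\phi\otimes k)\subset\ker(\psi\otimes k)$ is an equality. The main point to take care of is the simultaneous choice of $\mathfrak m$: it is precisely the combination of the integral domain hypothesis (so the two determinantal ideals have nonzero product) with the Jacobson-radical-zero hypothesis that lets us pick one maximal ideal missing both obstructions. Everything else is the standard semicontinuity-of-rank argument.
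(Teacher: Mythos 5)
Your proof is correct and follows essentially the same strategy as the paper: you choose a maximal ideal $\mathfrak m$ avoiding a product of nonzero minors (using that $R$ is a domain to ensure the product is nonzero and that the Jacobson radical vanishes to find $\mathfrak m$), and then conclude by a rank count. The only cosmetic difference is that you phrase the constraint via determinantal ideals and explicitly verify $\mathrm{rk}_k(\phi\otimes k)\leq\rho$ using vanishing of higher minors, whereas the paper picks specific full-rank square submatrices $M'$, $N'$ and squeezes using the complex condition via the chain $b\geq\mathrm{rk}_{R/\mathfrak m}M+\mathrm{rk}_{R/\mathfrak m}N\geq\mathrm{rk}_{R/\mathfrak m}M'+\mathrm{rk}_{R/\mathfrak m}N'=b$; both versions get the upper bound on the sum of ranks by one route or another, and neither is genuinely more general.
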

\begin{proof}
    Fix a basis for the free modules in $L_\bullet$, and denote the morphisms by matrices $M$ and $N$. When base change to $K:=\mathrm{Frac}(R)$, the exactness of $L_\bullet \otimes K$ is equivalent to $\rk_KM+\rk_KN= b$. 
    
    Let  $M'$ and $N'$ be square sub-matrices of $M$ and $N$ with size $\rk_KM$ and $\rk_KN$ and full rank. The determinants $\det M'$ and $\det N'$ are both non-zero. As the Jacobson radical of $R$ is zero, there exists a maximal ideal $\mathfrak m$ such that $\det M',\det N'\notin \mathfrak m$. Hence, $$b\geq \mathrm{rk}_{R/\mathfrak m}M+\mathrm{rk}_{R/\mathfrak m}N\geq \mathrm{rk}_{R/\mathfrak m}M'+\mathrm{rk}_{R/\mathfrak m}N'=b.$$
    So $L_\bullet\otimes (R/\mathfrak m)$ is exact.
\end{proof}
\begin{Lem}\label{lem:suppi}
    Let $\cF$ be a coherent sheaf on an irreducible smooth variety $X$ with dimension $n$, then 
    \begin{align}\label{eq233}
        \dim(\mathrm{supp}^i(\cF))\leq n-i.
    \end{align}
    In particular, the set $\mathrm{supp}^i(\cF)$ is empty when $i>n$. 
    
    Moreover, let $\Gamma$ be an irreducible component of $\supp(\cF)$ with codimension $m$, then $\supp^m(\cF)\supset \Gamma$.
\end{Lem}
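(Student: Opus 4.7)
The plan is to exploit the fact that $X$, being smooth of dimension $n$, locally admits a finite locally free resolution $E_\bullet \to \cF$ of length at most $n$, so $\RHom(\cF, \cO_X)$ is represented by the dual complex $E_\bullet^\vee$. For an arbitrary (possibly non-closed) point $y \in X$, the $i$-th cohomology of $E_\bullet^\vee \otimes_{\cO_X} k(y)$ computes $\Ext^i_{\cO_{X,y}}(\cF_y, k(y))$. I would introduce the auxiliary subset $V_i \subset X$ consisting of all points $y$ at which this Ext is nonzero; clearly $\supp^i(\cF) = V_i \cap X^{\mathrm{cl}}$, and $\dim\supp^i(\cF) = \dim V_i$ by the Jacobson property.

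First I would check that $V_i$ is Zariski closed. This reduces to a local statement: on an affine open where $E_\bullet^\vee$ at positions $i{-}1, i, i{+}1$ is a three-term complex of trivial bundles, failure of exactness at position $i$ is equivalent to $\rk(d^{i-1}_y)+\rk(d^i_y) <$ (rank of the middle term), a closed condition by the usual rank stratification. This can alternatively be phrased via Lemma \ref{lem:jacob} applied to the restriction of this $3$-term complex to the coordinate ring of any irreducible closed subvariety $Z \subset X$: if the complex is exact at the generic point of $Z$, then a Zariski dense set of closed points of $Z$ lies in the complement of $V_i$, so the closed points of $Z \cap \supp^i(\cF)$ cannot be dense in $Z$.

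Granting closedness, the dimension bound is immediate. Let $Z$ be an irreducible component of $V_i$ with generic point $\zeta$. Then $\Ext^i_{\cO_{X,\zeta}}(\cF_\zeta, k(\zeta)) \neq 0$, but $\cO_{X,\zeta}$ is a regular local ring of dimension $n - \dim Z$, so its global dimension equals $n - \dim Z$, forcing $i \leq n - \dim Z$. This yields $\dim Z \leq n - i$, hence $\dim \supp^i(\cF) \leq n - i$, and in particular $\supp^i(\cF) = \emptyset$ once $i > n$.

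For the moreover part, let $\Gamma$ be an irreducible component of $\supp(\cF)$ of codimension $m$ with generic point $\eta$. Since $\eta$ is a minimal point of $\supp \cF$, the module $\cF_\eta$ is nonzero and supported only at the closed point of $\Spec \cO_{X,\eta}$, hence Artinian of depth $0$. Auslander--Buchsbaum applied to the regular local ring $\cO_{X,\eta}$ of dimension $m$ then gives $\mathrm{pd}(\cF_\eta) = m$, so $\Ext^m_{\cO_{X,\eta}}(\cF_\eta, k(\eta)) \neq 0$ and $\eta \in V_m$. Closedness of $V_m$ forces $\Gamma = \overline{\{\eta\}} \subset V_m$, and intersecting with closed points yields $\Gamma \subset \supp^m(\cF)$. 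The only mild obstacle in the argument is verifying closedness of $V_i$; everything else follows cleanly from Auslander--Buchsbaum combined with the regularity of the stalks $\cO_{X,\zeta}$ at \emph{arbitrary} (not only closed) points of the smooth variety $X$.
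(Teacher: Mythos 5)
Your argument is correct and, at its core, follows the same strategy as the paper --- pass to the generic point $\zeta$ of an irreducible component of the bad locus, use that $\cO_{X,\zeta}$ is a regular local ring of dimension equal to the codimension, and invoke finiteness of homological dimension --- but the execution differs in three ways worth recording. First, you stay on the $\Ext$ side throughout, whereas the paper dualizes via Verdier duality to $\mathrm{Tor}_i$ at closed points before localizing. Second, you introduce the scheme-theoretic locus $V_i$ of all (not only closed) points $y$ where $\Ext^i_{\cO_{X,y}}(\cF_y,k(y))\neq0$ and prove closedness via the rank stratification of the dual complex; the generic point of a component is then automatically a point of $V_i$. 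The paper instead records closedness of $\supp^i(\cF)$ on closed points via upper semicontinuity and uses Lemma \ref{lem:jacob} to transport non-exactness from all maximal ideals above $P$ to the fraction field $R_P/PR_P$. Your framing makes Lemma \ref{lem:jacob} optional rather than essential. Third, and most substantively, for the ``moreover'' part the paper reduces (by filtering and shrinking) to $M\cong R/P$ and computes $\Ext^m_R(R/P,R/\mathfrak m)=k$ at smooth closed points of $\Spec R/P$, whereas you work directly at the generic point $\eta$ of $\Gamma$: there $\cF_\eta$ is a nonzero finite-length module over the regular local ring $\cO_{X,\eta}$ of dimension $m$, so it has depth $0$, and Auslander--Buchsbaum gives projective dimension exactly $m$, forcing $\Ext^m_{\cO_{X,\eta}}(\cF_\eta,k(\eta))\neq0$. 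This is cleaner, avoids the reduction to cyclic modules, and unifies the two halves of the lemma under a single principle: analyze $\Ext$ against the residue field at a generic point, where the local ring is regular of the right dimension.
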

\begin{proof}
    By the upper semicontinuity of $\Hom(\cF,\cO_x[i])$ for closed points $x$ on $X$, the support $\supp^i(\cF)$ is closed. Also by Verdier duality, we have
    \[\H_i(\cF \overset{L}{\otimes} \cO_x) \cong \Hom(\cF,\cO_x[i])^\vee.\]

    As the statement is local, we may assume $X=\Spec{R}$ is affine, and the sheaf $\cF=\widetilde{M}$. As $R$ is regular, the module admits a free resolution $L_\bullet$ of finite length. 

    For every irreducible component $\Spec{R/P}$ of $\supp^i(\widetilde M)$, by definition, the sequence $L_\bullet\otimes R/\mathfrak m$ is not exact at $L_i\otimes R/\mathfrak m$ for every maximal ideal $m\supset P$. 

    Apply Lemma \ref{lem:jacob} to $R/P$ and the sequence $\{L_{i+1}\otimes (R/P)\to L_{i}\otimes (R/P)\to L_{i-1}\otimes (R/P)\}$, we see that the sequence $L_\bullet\otimes \mathrm{Frac}(R/P)=L_\bullet \otimes (R_P/PR_P)$ cannot be exact at the $i$-th term.

    Note that $L_\bullet \otimes R_P$ is a free resolution for the $R_P$-module $M_P$. The cohomology of the sequence $L_\bullet\otimes(R_P/PR_P) $ is $\mathrm{Tor}^\bullet_{R_P}(M_P,R_P/PR_P)$. In particular,  $$\mathrm{Tor}^i_{R_P}(M_P,R_P/PR_P)\neq0.$$
    
    Since $R$ is regular, $R_P$ is regular. It follows that $\mathrm{Tor}^j_{R_P}(M_P,R_P/PR_P)=0$ when $j>\dim(R_P)=n-\dim(\Spec R/P)$. Therefore, $\dim(\Spec R/P)\leq n-i$. The inequality \eqref{eq233} holds.

    For the second part of the statement, as $M$ can be expressed as the extension of the push-forward of some  $R/P$-modules, by the first part of the statement and shrinking the open subset if necessary, we may assume $M\cong R/P$. Then for every maximal ideal $\mathfrak m\supset P$ where $\Spec R/P$ is regular, $\Ext^m_R(R/P,R/\mathfrak m)=k$. The statement follows.
\end{proof}

\begin{Prop}\label{prop:supp2}
    Let $X$ be a smooth variety and $U$ be an open subset of $X$. Let $E\in\Db(X)$ be an object satisfying $\supp(\tau^{\leq 0}(E))\cap U\neq \emptyset$. Then there exists a morphism $h\colon E\to \cO_x[m]$ for some closed point $x\in U$, $m\in\Z$ such that the composition of morphisms $h\circ g\colon \tau^{\leq 0}(E)\to E\to \cO_x[m]$ is non-zero, where $g$ is the canonical morphism associated to the truncation functor.
\end{Prop}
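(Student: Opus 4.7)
The plan is to reduce constructing $h$ to a Hom-computation on the lowest cohomology sheaf of $E$ that meets $U$. Using that $\Hom_X(E,\cO_x[k]) = \Hom_U(E|_U,\cO_x[k])$ for $x\in U$ (since $\cO_x = j_*\cO_x$ for the open immersion $j\colon U\hookrightarrow X$), I replace $X$ by $U$ and $E$ by $E|_U$; passing to a connected component (which is irreducible by smoothness), I further assume $X$ is irreducible. Let $i := \min\{j : \H^j(E)\neq 0\}$; the support hypothesis on $\tau^{\le 0}(E)$ forces $i\le 0$. Applying the second part of Lemma~\ref{lem:suppi} I fix an irreducible component $\Gamma$ of $\supp(\H^i(E))$ of codimension $m_0$ in $X$, so that $\Ext^{m_0}(\H^i(E),\cO_x)\neq 0$ on a dense open subset $\Gamma^\circ\subseteq\Gamma$.

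Set $m := m_0 - i \ge 0$. I claim it suffices to find $x\in\Gamma^\circ$ such that
\begin{enumerate}
\item[\rm(A)] $\Hom(\tau^{\ge 1}(E),\cO_x[m+1]) = 0$, and
\item[\rm(B)] $\Hom(\tau^{\le 0}(E),\cO_x[m]) \neq 0$.
\end{enumerate}
Indeed, applying $\Hom(-,\cO_x[m])$ to the triangle $\tau^{\le 0}(E)\to E\to \tau^{\ge 1}(E)$, condition (A) makes the target of the connecting map $\Hom(\tau^{\le 0}(E),\cO_x[m])\to\Hom(\tau^{\ge 1}(E),\cO_x[m+1])$ vanish, so the restriction $\Hom(E,\cO_x[m])\to\Hom(\tau^{\le 0}(E),\cO_x[m])$ is surjective; any lift $h$ of the nonzero element given by (B) then automatically satisfies $h\circ g\neq 0$.

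For (A), the inclusion \eqref{eqsuppi} yields $\supp^{m+1}(\tau^{\ge 1}(E))\subseteq\bigcup_{j\ge 1}\supp^{m+1+j}(\H^j(E))$; by the first part of Lemma~\ref{lem:suppi} each term has codimension $\ge m+1+j > m_0$ in $X$, hence meets the codimension-$m_0$ irreducible $\Gamma$ in a proper closed subset. For (B), apply $\Hom(-,\cO_x[m])$ to the triangle $\H^i(E)[-i]\to\tau^{\le 0}(E)\to\tau^{[i+1,0]}(E)$, where $\tau^{[i+1,0]} := \tau^{\ge i+1}\circ\tau^{\le 0}$. An analogous codimension count shows that both $\Hom(\tau^{[i+1,0]}(E),\cO_x[m])$ and $\Hom(\tau^{[i+1,0]}(E),\cO_x[m+1])$ vanish on a dense open of $\Gamma$, so the long exact sequence collapses to an isomorphism $\Hom(\tau^{\le 0}(E),\cO_x[m])\cong\Ext^{m_0}(\H^i(E),\cO_x)$, nonzero on $\Gamma^\circ$.

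Finitely many dense open subsets of the irreducible $\Gamma$ have nonempty intersection, so a valid $x$ exists; the main technical content is the codimension bookkeeping in (A) and (B), with the rest being formal manipulation of the long exact sequences attached to the truncation triangles.
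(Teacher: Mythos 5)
Your argument is correct, and it reaches the conclusion by a genuinely different route from the paper. The paper works on $X$ throughout and must control $\supp(\tau^{\leq t-1}(E))\cap U$: it first takes $s$ to be the \emph{minimal codimension} among irreducible components of $\supp(\H^j(E))$ ($j\le 0$) that meet $U$, and then $t$ the \emph{minimal degree} realizing that codimension, so that every component of $\supp(\tau^{\leq t-1}(E))\cap U$ has codimension strictly greater than $s$ and a generic point of the chosen $\Gamma$ avoids it. You instead replace $X$ by $U$ (and an irreducible component of it) at the outset, using the adjunction $\Hom_X(-,j_*\cO_x)\cong\Hom_U(j^*(-),\cO_x)$ and the fact that truncation and the canonical map $g$ commute with the exact functor $j^*$; after this reduction the minimal degree $i$ satisfies $\tau^{\leq i-1}(E)=0$ automatically, so the codimension-minimization step is unnecessary and any irreducible component $\Gamma$ of $\supp(\H^i(E))$ works. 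The downstream dimension bookkeeping via \eqref{eqsuppi} and Lemma~\ref{lem:suppi}, and the two long-exact-sequence arguments producing first a nonzero class in $\Hom(\tau^{\leq 0}(E),\cO_x[m])$ and then its lift to $\Hom(E,\cO_x[m])$, are essentially the same as the paper's (the paper additionally passes through $\tau^{\leq t}(E)$, but the estimates are of the same type). Your reduction to $U$ makes the argument cleaner and slightly shorter, at the cost of the one extra observation that the Hom-spaces and the truncation triangle restrict compatibly along the open immersion.
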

\begin{proof}
    The first step is to choose the closed point $x$. Let
    $$S:=\{\Gamma\;|\; \Gamma \text{ irreducible component of }\mathrm{supp}(\H^i(E))\text{ for some } i\leq 0,\; \Gamma\cap U\neq \emptyset\}.$$
    By assumption, the set $S$ is non-empty. Let $s=\min\{\codim(\Gamma)\;|\; \Gamma\in S\}$ and
    $$t=\min\{i\;|\; \supp(\H^i(E)) \text{ contains an irreducible component }\Gamma \text{ with }\codim(\Gamma)=s, i\leq 0\}.$$

    Let $\Gamma$ be an irreducible component of $\supp(\H^t(E))$ with $\codim \Gamma = s$.  By the choice of $\Gamma$, every irreducible component in $\supp(\tau^{\leq t-1}(E)))\cap U$ has codimension greater than $s$.

    By Lemma \ref{lem:suppi} and \eqref{eqsuppi}, for every $l_2\geq l_1\geq t+1$, we have
    \begin{align*}
        & \dim\left(\mathrm{supp}^{s-t}(\tau^{\geq l_1}(\tau^{\leq l_2}(E))[-1])\right)\leq \dim\left(\bigcup_{l_1\leq j\leq l_2}\mathrm{supp}^{s-t}(\H^j(E)[-1-j])\right)\\
  \notag  =& \max_{l_1\leq j\leq l_2}\{\dim(\mathrm{supp}^{s-t}(\H^j(E)[-1-j]))\}=\max_{l_1\leq j\leq l_2}\{\dim(\mathrm{supp}^{s-t+j+1}(\H^j(E)))\} \\ \notag\leq & \max_{l_1\leq j\leq l_2}\{n-(s-t+j+1)\}\leq n-s-2.
    \end{align*}
Hence there exists a closed point \begin{equation*}
        x\in (U\cap \Gamma)\setminus (\supp(\tau^{\leq t-1}(E))\cup\mathrm{supp}^{s-t}(\tau^{\geq t+1}(\tau^{\leq 0}(E)[-1]))\cup\mathrm{supp}^{s-t}(\tau^{\geq 1}(E)[-1])).
    \end{equation*}

   Now the second step is to show that $\Hom(\tau^{\leq 0}(E),\cO_x[m])\neq 0$, where $m=s-t$. Apply $\Hom(-,\cO_x)$ to $\tau^{\leq t-1}(E)\to\tau^{\leq t}(E)\to \H^{t}(E)[-t]\xrightarrow{+}$, by the choice of $x$, we have $$\Hom(\tau^{\leq t}(E),\cO_x[m])=\Hom(\H^t(E)[-t],\cO_x[m])=\Hom(\H^t(E),\cO_x[s])\neq 0.$$
   
   When $t=0$, this is exactly what we want. When $t\leq -1$, apply $\Hom(-,\cO_x)$ to
   $$\tau^{\geq t+1}(\tau^{\leq 0}(E))[-1]\to\tau^{\leq t}(E)\to\tau^{\leq 0}(E)\to \tau^{\geq t+1}(\tau^{\leq 0}(E))\xrightarrow{+},$$
   by the choice of $x$, we have $\Hom(\tau^{\geq t+1}(\tau^{\leq 0}(E))[-1],\cO_x[m])= 0$, and the non-vanishing follows.

Finally apply $\Hom(-,\cO_x)$ to $\tau^{\geq 1}(E)[-1]\to\tau^{\leq 0}(E)\to E\to \tau^{\geq 1}(E)\xrightarrow{+}$, note that by the choice of $X$, we have $$\Hom(\tau^{\geq 1}(E)[-1],\cO_x[m])= 0.$$ Hence we get a surjective map $$-\circ g\colon \Hom(E,\cO_x[m])\to\Hom(\tau^{\leq 0}(E),\cO_x[m]).$$ 
    
   The preimage of any non-zero morphism in $\Hom(\tau^{\leq 0}(E),\cO_x[m])$ gives $h$ as desired.
\end{proof}

\begin{Lem}\label{lem:2homtosheaf}
    Let $X$ be a smooth variety and $U$ be an open subset of $X$. Let $\cF\in\Coh(X)$ and $p$ be a closed point in $U$.  Let $E\in\Db(X)$ be an object satisfying $\supp(\H^i(E))\cap U=\emptyset$ when $i\geq 0$. Then 
    the composition of any morphisms $E\xrightarrow{g}\cF\xrightarrow{h}\cO_p[m]$ is always zero for every $m\in\Z$.
\end{Lem}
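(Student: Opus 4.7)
The plan is to factor $g$ through the canonical truncation $\tau^{\geq 0}(E)$ and then use that this truncation has zero stalk at $p$, while $\cO_p[m]$ is supported at $p$. Since $\cF\in\Coh(X)\subset D^{\geq 0}(X)$, the adjunction between the inclusion $D^{\geq 0}(X)\hookrightarrow \Db(X)$ and its left adjoint $\tau^{\geq 0}$ gives a natural isomorphism $\Hom_{\Db(X)}(E,\cF)\cong\Hom_{\Db(X)}(\tau^{\geq 0}(E),\cF)$, realized by precomposition with the canonical morphism $\eta\colon E\to \tau^{\geq 0}(E)$. Hence $g=\tilde g\circ \eta$ for a unique $\tilde g\colon\tau^{\geq 0}(E)\to\cF$, and since $h\circ g=(h\circ\tilde g)\circ\eta$, it suffices to show that the intermediate morphism $h\circ\tilde g\colon\tau^{\geq 0}(E)\to\cO_p[m]$ vanishes.

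By hypothesis, the cohomology sheaves $\H^i(\tau^{\geq 0}(E))=\H^i(E)$ for $i\geq 0$ all have support disjoint from $U$, so their stalks at $p\in U$ vanish, and therefore $\tau^{\geq 0}(E)_p\simeq 0$ in $D^b(\cO_{X,p})$. Because $\cO_p[m]$ is a skyscraper supported at $\{p\}$, the local $\RlHom$-sheaf $\RlHom_X(\tau^{\geq 0}(E),\cO_p[m])$ is also supported at $\{p\}$, so taking global sections reduces to the stalk at $p$, yielding
\[
\Hom_X\bigl(\tau^{\geq 0}(E),\cO_p[m]\bigr)\;\cong\;\Ext^m_{\cO_{X,p}}\bigl(\tau^{\geq 0}(E)_p,\kappa(p)\bigr)\;=\;0.
\]
Consequently $h\circ\tilde g=0$, and thus $h\circ g=0$.

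There is no real obstacle here; the argument is essentially formal. The only point requiring care is getting the adjunction direction right---maps out of $E$ into an object of $D^{\geq 0}$ factor through $\tau^{\geq 0}(E)$ rather than $\tau^{\leq 0}(E)$---which is precisely what matches the hypothesis on $\H^i(E)$ for $i\geq 0$ with the desired vanishing.
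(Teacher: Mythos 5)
Your proof is correct and uses essentially the same ingredients as the paper's: both boil down to the vanishing $\Hom(\tau^{\geq 0}(E),\cO_p[m])=0$, which holds because $p\notin\supp(\tau^{\geq 0}(E))$, combined with the observation that $g\colon E\to\cF$ kills $\tau^{\leq -1}(E)$. You package the latter via the adjunction factorization $g=\tilde g\circ\eta$ through $\tau^{\geq 0}(E)$, while the paper instead applies $\Hom(-,\cO_p[m])$ to the triangle $\tau^{\leq -1}(E)\to E\to\tau^{\geq 0}(E)$ to get injectivity of $-\circ\mathsf c$ and deduces $h\circ g\circ\mathsf c=0$; these are the same argument up to reorganization.
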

\begin{proof}
     Apply $\Hom(-,\cO_p)$ to the distinguished triangle $\tau^{\leq -1}(E)\xrightarrow{\mathsf{c}} E\xrightarrow{} \tau^{\geq 0}(E)\xrightarrow{+}$, we get
    \begin{equation}\label{eq278}
      \dots\to\Hom(\tau^{\geq 0}(E),\cO_p[m])\to \Hom(E,\cO_p[m])\xrightarrow{-\circ \mathsf c} \Hom(\tau^{\leq -1}(E),\cO_p[m])\to \dots
    \end{equation}
    
    As $\supp(\tau^{\geq 0}(E))=\cup_{i\geq0}\supp(\H^i(E))\not\ni p$, it follows that $\Hom(\tau^{\geq 0}(E),\cO_p[m])=0$.
    
    So the map $-\circ \mathsf c$ in the sequence \eqref{eq278} is injective. Note that $g\circ \mathsf c\in\Hom(\tau^{\leq -1}(E),\cF)=0$, so $h\circ g\circ \mathsf c=0$. It follows that $h\circ g=0$.
\end{proof}

\subsection{Restriction of a functor}

Let $X$ and $Y$ be smooth projective varieties, $U$ and $V$ be an open subvariety on $X$ and $Y$ respectively. Denote by $W=X\setminus U$ and $Z=Y\setminus V$. Denote by $\iota\colon U\to X$ the inclusion morphism.

\begin{Lem}\label{lem:cohresonU}
 Let $\cF_U$ and $\cG_U$ be coherent sheaves on $U$,  $\cF_j$ and $\cG_j$ be coherent sheaves on $X$, $j=1,2$. Assume that $\cF_j|_U\cong \cF_U$ and $\cG_j|_U\cong \cG_U$ for $j=1,2$. Let $f\in \Hom(\cF_1,\cF_2)$, $g\in\Hom(\cG_1,\cG_2)$, $h_j\in\Hom(\cF_j,\cG_j)$ such that $f|_U$, $g|_U$ are isomorphisms and $(h_2\circ f)|_U=(g\circ h_1)|_U$.   
  
  Then there exist coherent sheaves $\cF\subset \iota_*\cF_U$, $\cG\subset \iota_* \cG_U$ and morphisms $f_j\in\Hom(\cF_j,\cF)$, $g_j\in\Hom(\cG_j,\cG)$, such that $f_j|_U$, $g_j|_U$ are isomorphisms and $f_1|_U=(f_2\circ f)|_U$, $g_1|_U=(g_2\circ g)|_U$. In addition, there exists a morphism $h\in\Hom(\cF,\cG)$ such that $h\circ f_1=g_1\circ h_1$ and $h\circ f_2=g_2\circ h_2$.
\end{Lem}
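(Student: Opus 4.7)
The plan is to realize $\cF$ and $\cG$ as the smallest subsheaves of $\iota_*\cF_U$ and $\iota_*\cG_U$ containing the images of $\cF_1,\cF_2$ and $\cG_1,\cG_2$ respectively, under canonical maps built from the adjunction $\iota^*\dashv\iota_*$. The morphisms $f_j,g_j$ and $h$ will then be read off directly from this ambient description.

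First, I would fix an arbitrary isomorphism $\phi_1\colon \cF_1|_U\xrightarrow{\sim}\cF_U$ and set $\phi_2:=\phi_1\circ(f|_U)^{-1}\colon\cF_2|_U\xrightarrow{\sim}\cF_U$; similarly fix $\varphi_1\colon \cG_1|_U\xrightarrow{\sim}\cG_U$ and set $\varphi_2:=\varphi_1\circ (g|_U)^{-1}$. Composing the adjunction unit $\eta_j\colon\cF_j\to\iota_*\iota^*\cF_j$ with $\iota_*\phi_j$ defines $\psi_j\colon \cF_j\to\iota_*\cF_U$, and similarly we obtain $\psi'_j\colon\cG_j\to\iota_*\cG_U$. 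Define $\cF\subset\iota_*\cF_U$ as the image of $(\psi_1,\psi_2)\colon\cF_1\oplus\cF_2\to\iota_*\cF_U$ and $\cG\subset\iota_*\cG_U$ analogously, so each is coherent (being a quotient of a coherent sheaf). The morphism $f_j\colon\cF_j\to\cF$ is the corestriction of $\psi_j$, and $g_j$ is defined similarly. Since $\iota^*\iota_*\cong\id$ on quasi-coherent sheaves over $U$, restricting to $U$ recovers $\phi_j$ and $\varphi_j$, whence $f_j|_U$ and $g_j|_U$ are isomorphisms. The naturality of the unit together with the compatibility $\phi_2\circ f|_U=\phi_1$ built into our choice of $\phi_2$ yields $\psi_2\circ f=\psi_1$ in $\iota_*\cF_U$, and composing with the injection $\cF\hookrightarrow\iota_*\cF_U$ gives $f_2\circ f=f_1$; the same argument handles $g$.

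To construct $h$, define $h_U\colon\cF_U\to\cG_U$ by $h_U:=\varphi_1\circ h_1|_U\circ\phi_1^{-1}$; using the hypothesis $(g\circ h_1)|_U=(h_2\circ f)|_U$ together with the definitions of $\phi_2$ and $\varphi_2$, a direct computation shows that $h_U$ equals $\varphi_2\circ h_2|_U\circ\phi_2^{-1}$ as well. The pushforward $\iota_*h_U\colon\iota_*\cF_U\to\iota_*\cG_U$ then satisfies $\iota_*h_U\circ\psi_j=\psi'_j\circ h_j$ by naturality of the unit applied to $h_j$. In particular, $\iota_*h_U$ carries $\cF=\psi_1(\cF_1)+\psi_2(\cF_2)$ into $\cG=\psi'_1(\cG_1)+\psi'_2(\cG_2)$, so it restricts to a morphism $h\colon\cF\to\cG$. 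The identities $h\circ f_j=g_j\circ h_j$ follow at once by composing with the injection $\cG\hookrightarrow\iota_*\cG_U$.

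The construction is essentially forced, and the only real obstacle is the bookkeeping needed to arrange the four isomorphisms $\phi_j,\varphi_j$ so that all compatibilities — namely $\psi_2\circ f=\psi_1$, $\psi'_2\circ g=\psi'_1$, and the agreement of the two formulas for $h_U$ — hold simultaneously. Once these choices are fixed, everything else reduces to the naturality of the unit $\id\to\iota_*\iota^*$ and the fact that $\iota^*\iota_*\cong\id$ for the open immersion $\iota$.
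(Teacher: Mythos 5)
Your proof is correct and essentially the same as the paper's: both push the data forward along $\iota_*$ via the adjunction unit and cut down to coherent subsheaves of $\iota_*\cF_U$, $\iota_*\cG_U$. The only (minor) variation is that you take $\cF$ and $\cG$ to be the canonical minimal choices — the sums of the images of the two pushed-forward maps — so the containment $\iota_*h_U(\cF)\subset\cG$ falls out of the compatibility relations for free, whereas the paper leaves $\cF$, $\cG$ as arbitrary coherent subsheaves containing the relevant images and explicitly enlarges $\cG$ to contain $(\iota_*\tilde h)(\cF)$.
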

\begin{proof}
    The assumptions give the following commutative diagram:
    \begin{equation}\label{eq3com}
        \begin{tikzcd}
               \iota^*\cF_2 \ar{r}{\widetilde{f}_2}[swap]{\cong} \ar{d}[swap]{\iota^*h_2} & \cF_U \ar{d}{\widetilde{h}}& \iota^*\cF_1\ar{l}{\cong}[swap]{\widetilde f_1} \ar{d}{\iota^*h_1} \\
             \iota^*\cG_2 \ar{r}{\widetilde{g}_2}[swap]{\cong} & \cG_U & \iota^*\cG_1\ar{l}{\cong}[swap]{\widetilde g_1}.
        \end{tikzcd}
    \end{equation}
    As $\iota^*$ is left adjoint to $\iota_*$, by Lemma \ref{lem:adjfacts} (3),  we have the commutative diagram: 
    \begin{equation}\label{eq3com2}
        \begin{tikzcd}
              \cF_2 \ar{r}{\Phi(\widetilde{f}_2)}[swap]{} \ar{d}[swap]{h_2} & \iota_*\cF_U \ar{d}{\iota_*\widetilde{h}} & \cF_1\ar{l}[swap]{ \Phi(\widetilde f_1)} \ar{d}{h_1}  \\
              \cG_2 \ar{r}[swap]{\Phi(\widetilde{g}_2)} & \iota_*\cG_U&\cG_1\ar{l}{\Phi(\widetilde g_1)}.
        \end{tikzcd}
    \end{equation}
    Let $\cF$ be a coherent subsheaf of $\iota_*\cF_U$ containing the images of  $\Phi(\widetilde f_1)$ and $\Phi(\widetilde f_2)$. Let $\cG$ be a coherent subsheaf of $\iota_*\cG_U$ containing the images of $\Phi(\widetilde g_1)$, $\Phi(\widetilde g_1)$, and $(\iota_*\widetilde h)(\cF)$. Let $f_j$, $g_j$, and $h$ be the morphisms restricted from $\Phi(\widetilde f_j)$, $\Phi(\widetilde g_j)$, and $\iota_*\widetilde h$ respectively. It is clear that they satisfy the conditions in the statement.
\end{proof}

\begin{Prop}\label{prop:functorrestoUV}
    Let $\sF\colon \Db(X)\to \Db(Y)$ be an exact  functor satisfying
    \begin{enumerate}
        \item $\supp\left(\H^i(\sF(\cF))\right)\subset Z$ for every $\cF\in\Coh(X)$ and $i\neq0$;
        \item $\supp\left(\sF(E)\right)\subset Z$ for every $E\in\Db(X)$ with $\supp(E)\subset W$.
    \end{enumerate}
   Let $E_j\in\Db(X)$, $j=1,2$ satisfying $\supp\left(\H^i(E_j)\right)\subset W$ for $i\neq 0$. Then we have the following properties: 
   \begin{enumerate}
       \item [\rm(1)] $\H^i(\sF(E))|_V\cong \H^0(\sF(\H^i(E)))|_V$ for every $E\in \Db(X)$ and $i\in\Z$.
       \item[\rm(2)] Consider $f\in\Hom(E_1,E_2)$ such that $\H^0(f)|_U$ is an isomorphism, then $\H^0(\sF(f))|_V$ is an isomorphism.
        If $\H^0(E_1)|_U\cong \H^0(E_2)|_U$, then $\H^0(\sF(E_1))|_V\cong \H^0(\sF(E_2))|_V$.
       \item [\rm(3)] Let $f_j\in\Hom(E_1,E_2)$, $j=1,2$ satisfying $\H^0(f_1)|_U = \H^0(f_2)|_U$, then $\H^0(\sF(f_1))|_V = \H^0(\sF(f_2))|_V$.
   \end{enumerate}
\end{Prop}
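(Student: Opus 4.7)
The strategy is to prove (1) first by induction on the number of nonzero cohomology sheaves, and then to use condition (2) together with truncation triangles to reduce (2) and (3) to statements about morphisms of coherent sheaves, where the kernel/cokernel/image are supported on $W$ and can be killed by condition (2).

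For part (1), I would induct on the number of $i$ with $\H^i(E) \neq 0$. The base case $E \cong \cF[-k]$ is immediate from condition (1), which gives $\H^i(\sF(\cF[-k]))|_V = \H^{i-k}(\sF(\cF))|_V$, vanishing unless $i=k$. For the inductive step, let $k=\min\{i:\H^i(E)\neq 0\}$ and apply $\sF$ to the canonical triangle $\H^k(E)[-k]\to E\to\tau^{\geq k+1}E\xrightarrow{+}$. The long exact sequence of cohomology sheaves, restricted to $V$, combined with the base case for the first term and the inductive hypothesis for the third term, isolates $\H^i(\sF(E))|_V \cong \H^0(\sF(\H^i(E)))|_V$ in each degree.

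For part (2), the key observation is that the natural zigzag $E_j \leftarrow \tau^{\leq 0}E_j \to \H^0(E_j)[0]$ becomes an isomorphism after applying $\sF$ and restricting to $V$. Indeed, the support hypothesis $\supp(\H^i(E_j))\subset W$ for $i\neq 0$, together with Lemma~\ref{lem:supportbasic}, forces $\tau^{\leq -1}E_j$ and $\tau^{\geq 1}E_j$ to have support in $W$, so condition (2) gives $\sF(\tau^{\leq -1}E_j)|_V=\sF(\tau^{\geq 1}E_j)|_V=0$. By naturality of truncation, $\H^0(\sF(f))|_V$ is an isomorphism if and only if $\H^0(\sF(\H^0(f)))|_V$ is. It then remains to treat the case of a morphism $g\colon \cF_1\to\cF_2$ of coherent sheaves on $X$ with $g|_U$ an isomorphism: both $\ker g$ and $\cok g$ are supported on $W$, so applying $\sF$ to the two short exact sequences $0\to\ker g\to\cF_1\to\im g\to 0$ and $0\to\im g\to\cF_2\to\cok g\to 0$ and taking $\H^0(-)|_V$ gives $\H^0(\sF(\cF_1))|_V\cong \H^0(\sF(\im g))|_V\cong \H^0(\sF(\cF_2))|_V$ with composition $\H^0(\sF(g))|_V$. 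For the ``in particular'' part, given an abstract isomorphism $\H^0(E_1)|_U\cong \H^0(E_2)|_U$, I would apply (a mild variant of) Lemma~\ref{lem:cohresonU}: take a coherent subsheaf $\cF\subset \iota_*(\H^0(E_1)|_U)$ containing the images of the adjunction maps from both $\H^0(E_1)$ and $\H^0(E_2)$ (identified via the given isomorphism), obtaining maps $f_j\colon \H^0(E_j)\to\cF$ whose restrictions to $U$ are isomorphisms; applying the first half of (2) to each $f_j$ and combining with part (1) yields the conclusion.

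For part (3), set $f=f_1-f_2$, so $\H^0(f)|_U=0$. By the same truncation reduction used in (2), $\H^0(\sF(f))|_V$ is naturally identified with $\H^0(\sF(\H^0(f)))|_V$, so it suffices to show the latter vanishes. The map $\H^0(f)\colon \H^0(E_1)\to\H^0(E_2)$ factors through $\im \H^0(f)$, which is supported on $W$; hence $\sF(\im\H^0(f))|_V=0$ by condition~(2), and $\H^0(\sF(\H^0(f)))|_V=0$ as the composition factors through zero.

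The main technical point to be careful about is the naturality of the identifications $\sF(E_j)|_V\cong \sF(\H^0(E_j))|_V$: these must be realised as genuine zigzags of morphisms coming from the functorial truncation triangles, not merely as abstract isomorphisms, in order for the reductions in (2) and (3) to preserve the commutative squares that compare $\sF(f)$ with $\sF(\H^0(f))$.
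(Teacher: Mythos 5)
Your proposal is correct, and while part (1) is essentially the paper's argument (induction via truncation triangles; the paper inducts on $n_U(E)=\#\{i:\supp(\H^i(E))\cap U\neq\emptyset\}$ and peels with $\tau^{\leq k}/\tau^{\geq k+1}$, you induct on the number of nonzero cohomology sheaves and peel off $\H^k(E)[-k]$, a cosmetic difference), you take a genuinely different route through (2) and (3). The paper proves (2) by observing that $\supp(\Cone f)\subset W$ and running the long exact sequence of $\H^\bullet(\sF(-))|_V$ on the cone triangle; for (3) it again uses the cone triangle together with the observation that a surjection $g$ between abstractly isomorphic coherent sheaves on a Noetherian scheme is forced to be an isomorphism, hence $\ker g=\im(\H^0(\sF(f))|_V)=0$. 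You instead establish once and for all the natural zigzag identification $\H^0(\sF(E_j))|_V \cong \H^0(\sF(\tau^{\leq 0}E_j))|_V \cong \H^0(\sF(\H^0(E_j)))|_V$, reducing both (2) and (3) to a morphism $g$ of honest coherent sheaves, and then kill $\ker g$, $\cok g$ (for (2)) or $\im g$ (for (3)) directly by condition (b). This buys you two things: (3) becomes a one-line factorization through zero with no Noetherian argument, and the first half of (2) only needs conditions (a) and (b), not part (1) itself. The price is that you must track the naturality of the truncation zigzag carefully (which you explicitly flag, and which is indeed the delicate point). Your treatment of the "in particular" clause of (2) via Lemma \ref{lem:cohresonU} matches the paper's; the lemma as stated already furnishes the common roof $\cE$ and maps $f_j$, so no "variant" is really needed. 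Both proofs are valid; yours is somewhat more modular.
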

\begin{proof}
    \rm{(1)} We induct on the number $n_U(E):=\#\{i\in\Z\;|\;\H^i(E)\cap U\neq \emptyset\}$. 

    When $n_U(E)=0$, we have  $\supp(\H^i(E))\subset W$  for every $i$. Hence $\supp(E)\subset W$. By condition {(\it b)}, 
    $\H^i(\sF(E))|_V=0=\H^0(\sF(\H^i(E)))|_V$.

    When $n_U(E)=1$, we may assume $\H^k(E)\cap U\neq \emptyset$ for some $k\in\Z$. Apply  $\H^\bullet(\sF(-))|_V$ to the distinguished triangle
$\tau^{\leq k}(E)\to E\to\tau^{\geq k+1}(E)\xrightarrow{+}$, we get \begin{align}\label{eq29}
   \dots \to \H^{j-1}(\sF(\tau^{\geq k+1}(E)))|_V \to\H^j(\sF(\tau^{\leq k}(E)))|_V\to   \H^j(\sF(E))|_V\to  \H^j(\sF(\tau^{\geq k+1}(E)))|_V\to  \dots
\end{align}
Note that $\supp(\tau^{\geq k+1}(E))\subset Z$, by (b), $$\H^j(\sF(\tau^{\leq k}(E)))|_V\cong   \H^j(\sF(E))|_V$$
for every $j\in\Z$. Apply $\H^\bullet(\sF(-))|_V$ to the distinguished triangle
$\H^k(E)[-k]\to \tau^{\leq k}(E)\to\tau^{\leq k-1}(E)\xrightarrow{+}$, by (b), we get 
\begin{equation}\label{eq21}
    \H^{j-k}(\sF(\H^k(E)))|_V\cong\H^j(\sF(\H^k(E)[-k]))|_V\cong\H^j(\sF(\tau^{\leq k}(E)))|_V\cong   \H^j(\sF(E))|_V
\end{equation} for every $j\in\Z$. 

When $j\neq k$, by (a), the left-hand side of \eqref{eq21} is $0$, so $\H^j(\sF(E))|_V=0$. Note that $\supp(\H^j(E))\subset Z$, by (b), $\H^0(\sF(\H^j(E)))|_V=0$. When $j=k$, \eqref{eq21} gives the statement directly.

Now assume the statement holds for all $E$ with $n_U(E)\leq s$. Let $E$ be an object with $n_U(E)=s+1$. Assume  that $n_U(\tau^{\leq k}(E)), n_U(\tau^{\geq k+1}(E))\leq s$, then by induction, when $j\leq k$, $$\H^j(\sF(\tau^{\geq k+1}(E)))|_V\cong \H^0(\sF(\H^j(\tau^{\geq k+1}(E))))|_V=0.$$
When $j>k$, $\H^j(\sF(\tau^{\leq k}(E)))|_V\cong \H^0(\sF(\H^j(\tau^{\leq k}(E))))|_V=0.$

By the long exact sequence \eqref{eq29} and induction, when $j\geq k$, 
$$\H^j(\sF(\tau^{\leq k}(E)))|_V\cong \H^0(\sF(\H^j(\tau^{\leq k}(E))))|_V=\H^0(\sF(\H^j(E)))|_V.$$
When $j\leq k+1$, $\H^j(\sF(\tau^{\geq k+1}(E)))|_V\cong \H^0(\sF(\H^j(\tau^{\geq k+1}(E))))|_V=\H^0(\sF(\H^j(E)))|_V.$ 

The statement holds for all objects with $n_U=s+1$ and we complete the induction.\\

\rm{(2)} By the assumptions on $E_1$, $E_2$ and $f$, we have $\supp(\Cone(f))\subset W$. It follows from statement (1) that $\supp(\sF(\Cone(f)))\subset Z$. Apply $\H^0(\sF(-))|_V$ to the distinguished triangle $E_1\xrightarrow{f}E_2\to \Cone(f)\xrightarrow{+}$, the first part of the statement follows.

 For the second part of the statement, by Lemma \ref{lem:cohresonU}, there exists $\cE\in\Coh(X)$ and morphisms $f_j\colon \H^0(E_j)\to\cE$ such that $f_j|_U$ is an isomorphism. Hence $\supp(\Cone(f_j))\subset W$. Apply $\H^0(\sF(-))|_V$ to $\H^0(E_j)\to \cE\to \Cone(f_j)\xrightarrow{+}$, we conclude by (1) and condition (b) that $$\H^0(\sF(E_j))|_V\cong\H^0(\sF(\H^0(E_j)))|_V\cong \H^0(\sF(\cE))|_V.$$

\rm{(3)} By the additivity of all functors, we only need to show that for every $f\in\Hom(E_1,E_2)$,  $\H^0(f)|_U=0$ implies $\H^0(\sF(f))|_V=0$. 

Apply $\H^\bullet(\sF(-))|_V$ to $E_1\xrightarrow{f}E_2\to\Cone (f)\xrightarrow{+}$, by (1), we have the long exact sequence
$$0\rightarrow \H^{-1}(\sF(\Cone(f)))|_V\to \H^0(\sF(E_1))|_V \xrightarrow{\H^0(\sF(f))|_V} \H^0(\sF(E_2))|_V \xrightarrow{g} \H^0(\sF(\Cone(f)))|_V\to 0. $$
As the restricted morphism $f|_U=0$, we have  $\H^0(\Cone(f))|_U\cong \H^0(E_2)|_U$. It follows by (2) that $\H^0(\sF(E_2))|_V \cong \H^0(\sF(\Cone(f)))|_V$. Since $V$ is smooth, in particular Noetherian, and $\H^0(\sF(E_2))|_V$ is coherent, the surjective morphism $g$ must an isomorphism. Therefore, the morphism $\H^0(\sF(f))|_V=0$.
\end{proof}

Let $\cF$ be a coherent sheaf on $U$. Consider the direct system of all pairs of coherent sheaves with morphisms $\{(\cF_i,\varphi_i)\}_{i\in \cI}$ on $X$ indexed by a filtered category $\cI$  such that 
\begin{itemize} \label{eq32}
 \item $\varphi_i\colon \cF_i\rightarrow \iota_*\cF;\;\;\; \varphi_i|_U\colon  \cF_i|_U\cong \iota_*\cF|_U=\cF$.
 \item Two different indices   $i < i'$ if and only if $\varphi_{i'}$ is injective and $\im(\varphi_i)\subset \im(\varphi_{i'})$.
 \item The morphism $\varphi_{ii'}\colon  \cF_i\to \cF_{i'}$ is the unique one satisfyting $\varphi_i=\varphi_{i'}\circ\varphi_{ii'}$.
\end{itemize}

It folows from \cite[\href{https://stacks.math.columbia.edu/tag/01PD}{Lemma 28.22.5 and 6}]{stacksproject} that
\begin{equation}\label{eq_limit}
\;\;  \mathrm{colim}_i(\cF_i)=\iota_*\cF
\end{equation}

\begin{PropDef}\label{propdef:FrestrictoU}
  Let $\sF$ be an exact functor satisfying assumptions as that in Proposition \ref{prop:functorrestoUV}.  Then we may define an additive functor $\sF|_U$  from $\Coh(U)$ to $\Coh(V)$ as follows.

\begin{itemize}[leftmargin=*]
\item 
  Objects: for $\cF\in \Coh(U)$,  define $\sF|_U(\cF)$ to be the coherent sheaf $\H^0(\sF(\cF_i))|_V$ on $V$. 
\item
  Morphisms: for a morphism $h\colon\cF\to \cG$ in $\Coh(U)$, we define a morphism in $\Coh(V)$ as follows. Write $\iota_*\cG=\mathrm{colim}_{j\in \cJ}\cG_j$ as in \eqref{eq_limit}. Then there exist $\varphi_i\colon \cF_i\to \iota_*\cF$, $\phi_j\colon \cG_j \to \iota_*\cG$ and a morphism $h^i_j\colon \cF_i\to\cG_j$ satisfying $\phi_j\circ h^i_j = \iota_*h\circ \varphi_i$. Define $$\sF|_U(h)= \H^0(\sF(h))|_V\colon \H^0(\sF(\cF_i))|_V\to \H^0(\sF(\cG_j))|_V.$$
\end{itemize}

\end{PropDef}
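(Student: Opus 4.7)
The plan is to verify that the prescribed data yields an additive functor, well defined up to natural isomorphism. The main input is Proposition \ref{prop:functorrestoUV}, which governs the behavior of $\H^0(\sF(-))|_V$ under modifications supported on $W$. I would check in turn: well-definedness on objects, existence and well-definedness of the morphism assignment, and finally functoriality and additivity.

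For objects, given two pairs $(\cF_i,\varphi_i)$ and $(\cF_{i'},\varphi_{i'})$ in the direct system, the coherent subsheaf $\im(\varphi_i)+\im(\varphi_{i'})\subset\iota_*\cF$ again belongs to the system and dominates both. The induced morphisms from $\cF_i$ and $\cF_{i'}$ restrict to isomorphisms on $U$, so Proposition \ref{prop:functorrestoUV}(2) provides canonical isomorphisms among $\H^0(\sF(\cF_i))|_V$, $\H^0(\sF(\cF_{i'}))|_V$, and $\H^0(\sF(\im(\varphi_i)+\im(\varphi_{i'})))|_V$; this shows $\sF|_U(\cF)$ is well defined up to canonical isomorphism.

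For morphisms I first need to exhibit the lift $h^i_j$. Starting from any $\cF_i$, the image $\im(\iota_*h\circ\varphi_i)\subset\iota_*\cG$ is coherent, and adding it to any $\cG_{j_0}$ from the system produces a coherent subsheaf of $\iota_*\cG$ that still restricts to $\cG$ on $U$; this is an index $\cG_j$ through whose injective inclusion $\phi_j$ the morphism $\iota_*h\circ\varphi_i$ factors uniquely, giving $h^i_j$. For independence from the choices of $i$ and $j$, I would take a second choice $(i',j',h^{i'}_{j'})$, pass to common upper bounds $\cF_I\supset\cF_i,\cF_{i'}$ and $\cG_J\supset\cG_j,\cG_{j'}$, and produce the analogous lift $h^I_J\colon\cF_I\to\cG_J$; injectivity of $\phi_J$ forces the two compositions $\cF_i\to\cF_I\xrightarrow{h^I_J}\cG_J$ and $\cF_i\xrightarrow{h^i_j}\cG_j\to\cG_J$ to agree (and similarly for the primed indices). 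Applying $\H^0(\sF(-))|_V$ and using Proposition \ref{prop:functorrestoUV}(2) to identify the canonical isomorphisms on objects then shows that the induced morphism does not depend on the choice.

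Functoriality is then routine: the identity lifts to an identity, and for a composition $\cF\xrightarrow{h}\cG\xrightarrow{k}\cH$ one selects simultaneously a triple $(\cF_i,\cG_j,\cH_l)$ with lifts $h^i_j$ and $k^j_l$, whereupon $k^j_l\circ h^i_j$ lifts $k\circ h$ and $\H^0(\sF(-))|_V$ preserves the composition. Additivity is immediate from the additivity of each ingredient and from the observation that sums of lifts lift sums of morphisms. I expect the main technical obstacle to be the well-definedness of $\sF|_U(h)$ — making sure that the a priori many choices of coherent extensions descend to a single canonical morphism in $\Coh(V)$ — which is resolved precisely by the interplay between the uniqueness of lifts through the injective inclusions $\phi_j$ and Proposition \ref{prop:functorrestoUV}(2); should uniqueness ever fail in a given setup, Proposition \ref{prop:functorrestoUV}(3) serves as a backup, ensuring that any two lifts agreeing on $U$ yield the same morphism after applying $\H^0(\sF(-))|_V$.
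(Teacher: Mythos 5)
Your proposal is correct and follows essentially the same route as the paper: well-definedness on objects via common refinements in the direct system and Proposition \ref{prop:functorrestoUV}(2), existence of the lift $h^i_j$ by enlarging $\cG_j$ to contain $\im(\iota_*h\circ\varphi_i)$, independence of choices by passing to common upper bounds $\cF_I$, $\cG_J$, and then routine verification of functoriality and additivity. One small improvement you make, worth keeping: by observing that $\phi_J$ is injective you deduce that $h^I_J\circ\varphi_{iI}=\phi_{jJ}\circ h^i_j$ holds \emph{globally} on $X$, whereas the paper only records agreement on $U$ and then invokes Proposition \ref{prop:functorrestoUV}(3). Your version avoids the appeal to (3) at that step, though you correctly note that (3) remains available as a backup; the paper on the other hand does lean on (3) once more for objects, to argue that the identification $\H^0(\sF(\varphi_{ik}))|_V$ depends only on the restriction $\varphi_{ik}|_U$ and is therefore canonical across all representatives — a point you compress into "canonical isomorphisms" but which is justified the same way, since the transition maps $\varphi_{ik}$ are unique once the targets are taken injective.
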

\begin{proof}
   By Proposition \ref{prop:functorrestoUV} (2), the object $\sF|_U(\cF)$ does not depend on the choice of $\cF_i$.  Moreover, for every  $i< k$ in $\cI$ and morphism $\varphi_{ik}\colon \cF_i\to \cF_k$, the identification of $\sF|_U(\cF)$'s is given by the isomorphism $$\H^0(\sF(\varphi_{ik}))|_V\colon \H^0(\sF(\cF_i))|_V\cong\H^0(\sF(\cF_k))|_V.$$
By Proposition \ref{prop:functorrestoUV} (3), the morphism $\H^0(\sF(\varphi_{ik}))|_V$ only depends on $\varphi_{ik}|_U$ hence the embedding $\varphi_i|_U$ and $\varphi_k|_U$. This gives the canonical identifications between $\H^0(\sF(\cF_i))|_V$ for all representatives.\\

    For every $\varphi_i\colon \cF_i\to \iota_*\cF$, the image of  $\iota_*h\circ \varphi_i$ is a coherent subsheaf of $\iota_*\cG$. There exists $\phi_j\colon \cG_j\hookrightarrow \iota_* \cG$ such that $\mathrm{im}(\phi_j)\supset \im(\iota_*h\circ\varphi_i)$. So there exists $h^i_j$ as that in the statement.

    To show that $\sF|_U(h)$ does not depend on the choice of $h^i_j$, let $(\cF_{i'}, \varphi_{i'})$, $(\cG_{j'},\phi_{j'})$ and $h^{i'}_{j'}$ satisfy the assumption $\phi_{j'}\circ h^{i'}_{j'} = \iota_*h\circ \varphi_{i'}$. Then there exists $\varphi_I\colon \cF_I\hookrightarrow \iota_* \cF$ whose $\im(\varphi_I)\supset \im(\varphi_i)\cup\im(\varphi_{i'})$. So there exists $\varphi_{iI}\colon \cF_i\rightarrow\cF_I$ and $\varphi_{i'I}\colon \cF_{i'}\rightarrow\cF_I$.
    
    Let $\phi_J\colon \cG_J\hookrightarrow \iota_* \cG$ whose image contains the image of $\cG_j$, $\cG_{j'}$ and $\iota_*h\circ \varphi_I$. So there exists $h^I_J\colon \cF_I\to \cG_J$ such that     $\phi_J\circ h^I_J=\iota_*\circ \varphi_I$. It follows that $$\left(h^I_J\circ \varphi_{iI}\right)|_U=\left(\phi_{jJ}\circ h^i_j\right)|_U\text{ and }\left(h^I_J\circ \varphi_{i'I}\right)|_U=\left(\phi_{j'J}\circ h^{i'}_{j'}\right)|_U.$$
    By Proposition \ref{prop:functorrestoUV} (3), we have the following commutative diagram:
    \begin{center}
        \begin{tikzcd}
            \H^0(\sF(\cF_i))|_V \ar[equal]{rr}{\H^0(\sF(\varphi_{iI}))|_V} \ar{d}[swap]{\H^0(\sF(h^i_j))|_V}& &\H^0(\sF(\cF_I))|_V\ar{d}{\H^0(\sF(h^I_J))|_V} \ar[equal]{rr}{\H^0(\sF(\varphi_{i'I}))|_V}& &\H^0(\sF(\cF_{i'}))|_V\ar{d}{\H^0(\sF(h^{i'}_{j'}))|_V}\\
            \H^0(\sF(\cG_j))|_V \ar[equal]{rr}[swap]{\H^0(\sF(\phi_{jJ}))|_V} & &\H^0(\sF(\cG_J))|_V\ar[equal]{rr}[swap]{\H^0(\sF(\phi_{j'J}))|_V} & &\H^0(\sF(\cG_{j'}))|_V
        \end{tikzcd}
   \end{center}
Hence, $\sF|_U(h)$ does not depend on the choice of representatives.

The functor $\sF|_U$ is clearly additive as $\H^0(\sF(-))|_V$ is so.
\\

For the identity morphism $\id_{\cF}$, choose representative $\id\colon \cF_i\to\cF_i$, then $\sF(\id)$ is  identity and so does $\sF(\id)|_V$.

Given morphisms $h\colon \cF\to \cG$ and $t\colon \cG\to \cL$ in $\Coh(U)$, we may choose $h^i_j\colon \cF_i\to\cG_j$ and $t^j_k\colon \cG_j\to\cL_k$ as representatives in $\Coh(X)$. It is clear that $t^j_k\circ h^i_j$ is a representative of $t\circ h$. Therefore, $\sF|_U(t\circ h)=\H^0(\sF(t^j_k\circ h^i_j))|_V=\H^0(\sF(t^j_k))|_V\circ\H^0(\sF(h^i_j))|_V=\sF|_U(t)\circ\sF|_U(h).$

In summary, we have checked that $\sF|_U$ is well-defined on objects and morphisms, and $\sF|_U$ satisfies the axioms on morphisms.
\end{proof}
\subsection{Composition of restricted functors}

Let $X_i$, $i=1,2,3$ be smooth projective varieties with open subvarieties $U_i$ respectively. Let $\sF_1\colon \Db(X_1)\to\Db(X_2)$ and $\sF_2\colon \Db(X_2)\to\Db(X_3)$ be exact functors satisfying assumptions as that in Proposition \ref{prop:functorrestoUV} with respect to $U_i$'s. By Proposition and Definition \ref{propdef:FrestrictoU}, we have functors $\sF_1|_{U_1}\colon\Coh(U_1)\to \Coh(U_2)$ and $\sF_2|_{U_2}\colon\Coh(U_2)\to \Coh(U_3)$. We have the following statement for their composition.
\begin{Prop}\label{prop:composition}
Adopt notations as above, then the composition $\sF_2\circ \sF_1\colon\Db(X_1)\to\Db(X_3)$ also satisfies the assumptions in Proposition \ref{prop:functorrestoUV} with respect to $U_1$ and $U_3$. The restricted functor $(\sF_2\circ \sF_1)|_{U_1}=\sF_2|_{U_2}\circ \sF_1|_{U_1}$. 
\end{Prop}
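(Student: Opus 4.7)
The plan is to verify conditions (a) and (b) of Proposition \ref{prop:functorrestoUV} for the composition $\sF_2\circ \sF_1$, and then to compare the two restricted functors by unwinding the definitions. Throughout, let $\iota_k\colon U_k\hookrightarrow X_k$ denote the inclusions and write $W_k=X_k\setminus U_k$.

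Condition (b) for $\sF_2\circ \sF_1$ is immediate by applying the two hypotheses in turn: if $\supp(E)\subset W_1$, then (b) for $\sF_1$ gives $\supp(\sF_1(E))\subset W_2$, whence (b) for $\sF_2$ gives $\supp(\sF_2\sF_1(E))\subset W_3$. For condition (a), take $\cF\in \Coh(X_1)$ and apply Proposition \ref{prop:functorrestoUV} (1) to $\sF_2$ with the object $\sF_1(\cF)\in \Db(X_2)$:
\[
\H^i(\sF_2\sF_1(\cF))|_{U_3} \cong \H^0(\sF_2(\H^i(\sF_1(\cF))))|_{U_3}.
\]
For $i\neq 0$, condition (a) for $\sF_1$ gives $\supp(\H^i(\sF_1(\cF)))\subset W_2$, and then condition (b) for $\sF_2$ annihilates the right-hand side. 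Hence $\supp(\H^i(\sF_2\sF_1(\cF)))\subset W_3$ for all $i\neq 0$, verifying (a).

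To compare the two functors, take $\cF\in \Coh(U_1)$ and a representative $(\cF_i,\varphi_i)$ of $\iota_{1*}\cF$ from the direct system preceding Proposition and Definition \ref{propdef:FrestrictoU}; for a morphism $h\colon \cF\to \cG$, take a representative $h^i_j\colon \cF_i\to \cG_j$ in $\Coh(X_1)$. By definition, $(\sF_2\circ \sF_1)|_{U_1}(\cF)=\H^0(\sF_2\sF_1(\cF_i))|_{U_3}$ and $(\sF_2\circ \sF_1)|_{U_1}(h)=\H^0(\sF_2\sF_1(h^i_j))|_{U_3}$. On the other hand, $\sF_1|_{U_1}(\cF)=\H^0(\sF_1(\cF_i))|_{U_2}$, and $\H^0(\sF_1(\cF_i))\in \Coh(X_2)$ equipped with the canonical morphism to $\iota_{2*}\sF_1|_{U_1}(\cF)$ is a valid representative; similarly $\H^0(\sF_1(h^i_j))$ represents $\sF_1|_{U_1}(h)$. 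Therefore $\sF_2|_{U_2}(\sF_1|_{U_1}(\cF))=\H^0(\sF_2(\H^0(\sF_1(\cF_i))))|_{U_3}$ and $\sF_2|_{U_2}(\sF_1|_{U_1}(h))=\H^0(\sF_2(\H^0(\sF_1(h^i_j))))|_{U_3}$. Both identifications, on objects and on morphisms, come from Proposition \ref{prop:functorrestoUV} (1) applied to $\sF_2$ with $E=\sF_1(\cF_i)$ (respectively the morphism $\sF_1(h^i_j)$) and $i=0$, together with the naturality in $E$ of that isomorphism, which is inherited from the functoriality of the truncation triangles used in its proof.

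The main obstacle I anticipate is confirming this naturality, since Proposition \ref{prop:functorrestoUV} (1) is phrased object-wise. Should it not be transparent from the inductive construction, an alternative is to invoke Proposition \ref{prop:functorrestoUV} (3) directly: the two candidate morphisms, viewed on $U_3$, depend only on $h|_{U_1}$ via the canonical identifications, so they must coincide. Either route also yields preservation of identity morphisms and of composition, completing the proof.
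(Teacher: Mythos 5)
Your proposal is correct and follows essentially the same route as the paper: verify (a) via Proposition \ref{prop:functorrestoUV}(1) applied to $\sF_2$ with $E=\sF_1(\cF)$ together with (b) for $\sF_2$, verify (b) by applying (b) twice, and then compare the restricted functors by passing from $\sF_2\sF_1(\cF_i)$ to $\sF_2(\H^0(\sF_1(\cF_i)))$. The naturality worry you flag is resolved exactly as you suggest: the paper realizes the comparison isomorphism explicitly through the roof $\sF_1(\cF_i)\xrightarrow{\mathsf c_{\geq 0}}\tau^{\geq 0}(\sF_1(\cF_i))\xleftarrow{\mathsf c_0}\H^0_{X_2}(\sF_1(\cF_i))$, invokes Proposition \ref{prop:functorrestoUV}(2) (not (1)) to show both arrows become isomorphisms after applying $\H^0_{X_3}(\sF_2(-))|_{U_3}$, and then the functoriality of the truncation triangles yields the commutative squares needed both for independence of the representative and for compatibility with morphisms $h^i_j$.
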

\begin{proof}
    For every coherent sheaf $\cE$ on $X_1$ and $i\neq 0$, by Proposition \ref{prop:functorrestoUV} assumption (b) and statement (2). 
\begin{align*}
      &\supp(\H^i_{X_2}(\sF_1(\cE)))\cap U_2=\emptyset; \\
      &  \H^i_{X_3}((\sF_2\circ\sF_1)(\cE))|_{U_3}\cong \H^0_{X_3}\left(\sF_2\left(\H^i_{X_2}(\sF_1(\cE))\right)\right)|_{U_3}=0.
    \end{align*}
    For every object $E$ in $\Db(X_1)$ with $\supp(E)\cap U_1=\emptyset$, by Proposition \ref{prop:functorrestoUV} assumption (b), $\supp(\sF_1(E))\cap U_2=\emptyset$. By assumption (b) on $\sF_2$ again, $\supp((\sF_2\circ\sF_1)(E))\cap U_3=\emptyset$.
    Therefore, the exact functor $\sF_2\circ\sF_1$ satisfies the assumptions in Proposition \ref{prop:functorrestoUV} with respect to $U_1$ and $U_3$.\\
    
    For every coherent sheaf $\cF$ on $U_1$, let $\cF_{i}$ be a representative of $\iota_{1*}\cF$ on $X_1$, then  $\H^0_{X_2}(\sF_1(\cF_{i}))$ is a representative of $\iota_{2*}(\sF_1|_{U_1}(\cF))$. 
    
    Denote by $\mathsf c_{\geq 0}\colon\sF_1(\cF_i)\to\tau^{\geq0}(\sF_1(\cF_i))$ and $\mathsf c_{ 0}\colon\H^0_{X_2}(\sF_1(\cF_i))\to\tau^{\geq0}(\sF_1(\cF_i))$ the canonical morphisms respectively. We have the isomorphisms:
    \begin{equation}\label{eq3ts}
        \begin{tikzcd}
                 \H^0_{X_3}\left((\sF_2\circ\sF_1)(\cF_i)\right)|_{U_3}\ar{rr}{\H^0_{X_3}(\sF_2(\mathsf c_{\geq0}))|_{U_3}}[swap]{\cong} \ar[equal]{d} && \H^0_{X_3}\left(\sF_2(\tau^{\geq0}(\sF_1(\cF_i)))\right)|_{U_3}  && \H^0_{X_3}\left(\sF_2(\H^0_{X_2}(\sF_1(\cF_i)))\right)|_{U_3} \ar{ll}{\cong}[swap]{\H^0_{X_3}(\sF_2(\mathsf c_{0}))|_{U_3}} \ar[equal]{d}\\
                  (\sF_2\circ\sF_1)|_{U_1}(\cF)\ar[dashed]{rrrr}{\mathsf c_{\cF}}[swap]{=} &&&& \sF_2|_{U_2}(\sF_1|_{U_1}(\cF)).
        \end{tikzcd}
    \end{equation}
    
 As both $\H^0_{X_2}(\mathsf c_{\geq0})|_{U_2}$ and $\H^0_{X_2}(\mathsf c_{0})|_{U_2}$ are isomorphisms,  by Proposition \ref{prop:functorrestoUV}, both $\H^0_{X_3}(\sF_2(\mathsf c_{\geq0}))|_{U_3}$ and $\H^0_{X_3}(\sF_2(\mathsf c_{0}))|_{U_3}$ are isomorphisms.
 
 For any other representative $\cF_j$ with $\varphi_{ik}\colon\cF_i\to\cF_k$, we may apply $\H^0_{X_3}(\sF_2(-))|_{U_3}$ to the following commutative diagrams:
  \begin{equation}\label{eqtaus}
        \begin{tikzcd} 
        \sF_1(\cF_i)\ar{rr}{\mathsf c_{\geq0,i}} \ar{d}{\sF_1(\varphi_{ik})} && \tau^{\geq0}(\sF_1(\cF_i)) \ar{d}{\tau^{\geq 0}(\sF_1(\varphi_{ik}))} && \H^0_{X_2}(\sF_1(\cF_i)) \ar{ll}[swap]{\mathsf c_{0,i}} \ar{d}{\H^0_{X_2}(\sF_1(\varphi_{ik}))}\\
                   \sF_1(\cF_k)\ar{rr}{\mathsf c_{\geq0,j}} &&\tau^{\geq 0}(\sF_1(\cF_k)) && \H^0_{X_2}(\sF_1(\cF_k)) \ar{ll}[swap]{\mathsf c_{0,k}}.
        \end{tikzcd}
    \end{equation}
 
 It follows that the identification $\mathsf{c}_\cF$ between  $(\sF_2\circ\sF_1)|_{U_1}(\cF)$ and $ \sF_2|_{U_2}(\sF_1|_{U_1}(\cF))$ in \eqref{eq3ts} does not depend on the choice of the representatives of $\cF$. \\
 
 For every morphism $h\colon\cF\to\cG$ of coherent sheaves on $U_1$, let $h^i_j\colon\cF_i\to\cG_j$ be a representative of $h$ on $X_1$, then by Proposition and Definition \ref{propdef:FrestrictoU}, $\sF_1|_{U_1}(h)$ has a representative
 $$\H^0_{X_2}(\sF_1(h^i_j))\colon\H^0_{X_2}(\sF_1(\cF_i))\to \H^0_{X_2}(\sF_1(\cG_j))$$
 on $X_2$. 
 
  Denote by $\mathsf c'_{\geq 0}\colon\sF_1(\cG_j)\to\tau^{\geq0}(\sF_1(\cG_j))$ and $\mathsf c_{ 0}\colon\H^0_{X_2}(\sF_1(\cG_j))\to\tau^{\geq0}(\sF_1(\cG_j))$ the canonical morphisms respectively. We have the following  commutative diagram:
    \begin{equation}\label{eqtauhoms}
        \begin{tikzcd} 
        \sF_1(\cF_i)\ar{rr}{\mathsf c_{\geq0}} \ar{d}{\sF_1(h^i_j)} && \tau^{\geq0}(\sF_1(\cF_i)) \ar{d}{\tau^{\geq 0}(\sF_1(h^i_j))} && \H^0_{X_2}(\sF_1(\cF_i)) \ar{ll}[swap]{\mathsf c_{0}} \ar{d}{\H^0_{X_2}(\sF_1(h^i_j))}\\
                   \sF_1(\cG_j)\ar{rr}{\mathsf c'_{\geq0}} &&\tau^{\geq 0}(\sF_1(\cG_j)) && \H^0_{X_2}(\sF_1(\cG_j)) \ar{ll}[swap]{\mathsf c'_{0}}.
        \end{tikzcd}
    \end{equation}
    Apply $\H^0_{X_3}(\sF_2(-))|_{U_3}$ to \eqref{eqtauhoms}, we get 
     \begin{equation}\label{eqtaucom}
        \begin{tikzcd} 
        (\sF_2\circ \sF_1)|_{U_1}(\cF)\ar[equal]{rrrr}{\mathsf c_{\cF}} \ar{d}[swap]{(\sF_2\circ\sF_1)|_{U_1}(h)} &&& &  \sF_2|_{U_2}(\sF_1|_{U_1}(\cF))  \ar{d}{\sF_2|_{U_2}(\sF_1|_{U_1}(h))}\\
                   (\sF_2\circ \sF_1)|_{U_1}(\cG)\ar[equal]{rrrr}{\mathsf c'_{\cG}} &&& &\sF_2|_{U_2}(\sF_1|_{U_1}(\cG)).
        \end{tikzcd}
    \end{equation}
    The statement holds.
\end{proof}

\section{Fully faithful functors and birational equivalence}

\subsection{Main result}

The following theorem is the main result of this paper.

\begin{Thm}\label{thm:pointtoglobal}
    Let $X$ and $Y$ be two irreducible smooth projective varieties with a fully faithful exact functor $\sF\colon \Db(X)\rightarrow \Db(Y)$. Assume that there exists a closed point $x\in X$ such that $\sF(\cO_x)=\cO_y[m]$ for some closed point $y\in Y$ and some integer $m$. Then $X$ and $Y$ are birational.
\end{Thm}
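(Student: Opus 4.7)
My plan is to implement the three-step strategy outlined after the theorem statement. \textit{First,} I would write $\sF = \Phi_\cK$ for a Fourier--Mukai kernel $\cK \in \Db(X\times Y)$ via Orlov's representability theorem, so that $\sF(\cO_{x'})\cong L\iota_{x'}^*\cK$. Upper semicontinuity of fiber cohomology at the single surviving degree $-m$ (where $\sF(\cO_x)=\cO_y[m]$) yields an open $U\ni x$ on which $\H^i(\sF(\cO_{x'}))=0$ for $i\neq -m$. Writing $\cF_{x'}:=\H^{-m}(\sF(\cO_{x'}))$, fully faithfulness gives
\begin{equation*}
\Ext^i(\cF_{x'},\cF_{x'})\cong \Ext^i(\cO_{x'},\cO_{x'})\cong \bigwedge\nolimits^i k^n,\qquad n:=\dim X=\dim Y,
\end{equation*}
so $\End\cF_{x'}=k$ and $\cF_{x'}$ is indecomposable. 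Combined with $0$-dimensional support (from upper semicontinuity of fiber dimension for the proper projection $\supp\cK\to X$) and the above Ext dimensions, $\cF_{x'}$ is forced to be a reduced length-one skyscraper $\cO_{\phi(x')}$. This yields $\phi\colon U\to Y$ with $\sF(\cO_{x'})\cong \cO_{\phi(x')}[m]$; injectivity of $\phi$ follows from $\sG\sF\cong\id$ for any adjoint $\sG$ of $\sF$, and the symmetric argument run through $\sG$ shows $V:=\phi(U)$ is open in $Y$.

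\textit{Next,} I set $W=X\setminus U$, $Z=Y\setminus V$, and verify the hypotheses of Proposition~\ref{prop:functorrestoUV} for $\sF$ (and, in parallel, for $\sG$). Condition (b) is immediate from
\begin{equation*}
\RHom(\sF E,\cO_y)\cong \RHom(E,\sG\cO_y)\cong \RHom(E,\cO_{x'}[-m])=0
\end{equation*}
whenever $\supp(E)\subset W$ and $y=\phi(x')\in V$, since $E$ vanishes in an open neighborhood of $x'\in U$. For condition (a), any point in $\supp\H^i(\sF\cF)\cap V$ with $i\neq 0$ would produce via Proposition~\ref{prop:supp2} a nonzero morphism $\sF\cF\to\cO_y[j]$ at a degree $j$ outside the range $[m,m+n]$, contradicting the sheaf bound on $\RHom(\cF,\cO_{x'}[\ast-m])$ together with fully faithfulness. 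The symmetric hypotheses for $\sG$ rest on the computation $\sG\cO_{\phi(x')}\cong\cO_{x'}[-m]$.

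\textit{Finally,} Proposition and Definition~\ref{propdef:FrestrictoU} produces additive functors $\sF|_U\colon\Coh(U)\to\Coh(V)$ and $\sG|_V\colon\Coh(V)\to\Coh(U)$, and Proposition~\ref{prop:composition} applied to $\sG\sF\cong\id$ yields $\sG|_V\circ\sF|_U\cong\id_{\Coh(U)}$; the reverse identity holds on the image of $\sF|_U$, which contains all skyscrapers $\cO_{\phi(x')}$ and is closed under extensions, giving an equivalence $\Coh(U)\simeq\Coh(V)$ identifying $\cO_{x'}\leftrightarrow\cO_{\phi(x')}$. Gabriel's reconstruction theorem then delivers an isomorphism of schemes $U\cong V$, i.e., a birational equivalence between $X$ and $Y$. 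The hardest step is the first: upgrading the semicontinuity bounds to the sharp statement that $\cF_{x'}$ is a reduced skyscraper of length one requires essential use of the full Ext-algebra rigidity together with smoothness of $Y$, and one must additionally verify that the shift $m$ is locally constant on $U$.
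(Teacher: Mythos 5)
Your high-level structure follows the paper's three-step outline, and your Step~1 --- using a Fourier--Mukai kernel, upper semicontinuity of fiber cohomology and fiber dimension, and the rigidity of the Ext-algebra together with the Bridgeland--Maciocia criterion for point-like objects --- is a reasonable alternative to Proposition~\ref{prop:pointtoopen}, which instead proceeds via classical generators, iterated hyperplane sections $X_i$, and a Riemann--Roch argument; your kernel-based version is closer in spirit to the paper's alternative proof in Section~4. However, Step~2 contains a genuine gap. To invoke Proposition~\ref{propdef:FrestrictoU} you must verify hypothesis~(a) of Proposition~\ref{prop:functorrestoUV}, namely $\supp(\H^i(\sF\cF))\subset Z$ for all $\cF\in\Coh(X)$ and $i\neq 0$ (after normalizing $m=0$). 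For $i>0$ this does follow from Lemma~\ref{lem:supportbasic} and $\RHom(\cF,\cO_{x'}[i])=0$. For $i<0$ your argument fails: the nonzero morphism $\sF\cF\to\cO_y[j]$ produced by Proposition~\ref{prop:supp2} has degree $j=s-t$ with $s=\codim(\Gamma)\in[0,n]$ and $t\leq-1$, which need not lie outside $[0,n]$ (for instance $t=-1$, $s=0$ gives $j=1$), so no contradiction with $\RHom(\cF,\cO_{x'}[\ast])$ being concentrated in degrees $[0,n]$ arises. The correct argument is the one in Proposition~\ref{prop:Fofcoherentsheaf}: set $A=\tau^{\leq-1}(\sF\cF)$, pass to the adjoint composition $\sF^*(A)\to\cF\to\cO_{x'}[j]$, show it vanishes by Lemma~\ref{lem:2homtosheaf} because $\supp(\H^i(\sF^*A))\subset W$ for $i\geq 0$, then use full faithfulness to conclude that \emph{every} morphism $\sF\cF\to\cO_y[j]$ with $y\in V$ kills $A$, and only then invoke Proposition~\ref{prop:supp2} to deduce $\supp(A)\cap V=\emptyset$. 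Without this adjunction trick hypothesis~(a) is not established and the remainder of your proof does not go through. A smaller issue: your justification that $V=\phi(U)$ is open (``the symmetric argument run through $\sG$'') is too quick, since $\sG$ is not fully faithful and the same rigidity argument does not transport directly; the paper handles openness of $V$ explicitly via the construction in Step~2 of Proposition~\ref{prop:pointtoopen}.
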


The proof of this result will take up Section \ref{sec:iso_neigh} and \ref{sec:iso_open}. Here we first provide a direct application of this theorem.

\begin{Cor}\label{cor:pgneq0bir}
    Let $X$ and $Y$ be two smooth projective varieties with the same dimension. Assume that the canonical divisor $K_X$ is ample and $H^0(X,K_X)\neq 0$. If there exists a fully faithful exact functor $\sF\colon \Db(X)\rightarrow \Db(Y)$, then $Y$ is birational to $X$.
\end{Cor}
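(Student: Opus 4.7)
The strategy is to reduce the corollary to a direct application of Theorem~\ref{thm:pointtoglobal}. By that theorem, it is enough to exhibit a single closed point $x \in X$ such that $\sF(\cO_x) \cong \cO_y[m]$ for some closed point $y \in Y$ and some integer $m$: once such a pair $(x, y)$ is produced, the birationality of $X$ and $Y$ follows at once.

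The role of the cited results \cite{Pirozhkov:delPezzoSOD} and \cite{linxun:remarkhochschild} is precisely to furnish such a point. Since $\sF$ is fully faithful, it identifies $\Db(X)$ with an admissible subcategory $\cA \subset \Db(Y)$. The Serre functor on $\cA$ is induced from $S_X = (-) \otimes K_X[\dim X]$, while the Serre functor $S_Y = (-) \otimes K_Y[\dim Y]$ of $\Db(Y)$ restricts to $\cA$ together with a canonical natural transformation comparing the two. A nonzero element of $H^0(X, K_X)$ translates, via the Hochschild--Serre machinery exploited in \cite{linxun:remarkhochschild}, into a nonzero such natural transformation of endofunctors of $\cA$. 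Combining this with Pirozhkov's analysis of admissible subcategories from \cite{Pirozhkov:delPezzoSOD}, together with the assumptions $\dim X = \dim Y$ and $K_X$ ample, one concludes that for a general closed point $x \in X$ the object $\sF(\cO_x)$ is concentrated in a single cohomological degree and supported on a zero-dimensional subscheme of $Y$. The rigidity constraint $\End(\sF(\cO_x)) = \End(\cO_x) = k$ coming from full faithfulness then forces the underlying sheaf to have length one, so that $\sF(\cO_x) \cong \cO_y[m]$ for a single closed point $y \in Y$.

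Having secured such a pair $(x, y)$, Theorem~\ref{thm:pointtoglobal} applies and yields that $X$ and $Y$ are birational. The main obstacle is the middle step: extracting from the hypotheses $K_X$ ample and $H^0(X, K_X) \neq 0$ a literal shifted skyscraper $\cO_y[m]$ in the image of $\sF$, rather than an arbitrary point-like object of $\cA$. Once this extraction is carried out using the cited Hochschild-cohomological and SOD-structural results, the rest of the argument is a purely formal invocation of the main theorem of the paper.
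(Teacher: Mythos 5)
Your overall architecture is right: reduce to Theorem~\ref{thm:pointtoglobal} by exhibiting a closed point whose skyscraper sheaf maps to a shifted skyscraper sheaf. But the middle of your argument --- the part that is supposed to actually produce this pair --- is where the proof lives, and there is a genuine gap there.

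You claim that the Hochschild--Serre machinery and Pirozhkov's SOD analysis let one conclude that ``for a general closed point $x\in X$ the object $\sF(\cO_x)$ is concentrated in a single cohomological degree and supported on a zero-dimensional subscheme of $Y$.'' This does not follow from the cited results, which say nothing directly about $\sF(\cO_x)$. What \cite{Pirozhkov:delPezzoSOD} and \cite{linxun:remarkhochschild} give, and what the paper actually uses, is a statement about the \emph{other} component of the SOD: the support of $^\perp(\sF(\Db(X)))$ lies inside $\mathrm{Bs}|K_Y|$ (and the Hochschild homology comparison $\mathsf{HH}_{-n}(X)\hookrightarrow\mathsf{HH}_{-n}(Y)$ gives $H^0(Y,K_Y)\neq 0$, hence $\mathrm{Bs}|K_Y|\subsetneq Y$). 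The paper's argument then goes in the \emph{opposite} direction from yours: pick a closed point $y\notin\mathrm{Bs}|K_Y|$; because the left orthogonal is supported in the base locus, $\cO_y$ already lies in $\sF(\Db(X))$, i.e.\ $\cO_y=\sF(E)$ for some $E\in\Db(X)$; and the problem becomes to show that \emph{this} $E$ is a shifted skyscraper.

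That last step is the crux, and your proposal omits the tool that makes it work. One needs that $E$ is fixed, up to the shift $[n]$, by the Serre functor $S_X$. This is extracted via \cite[Lemma~2.7]{Kuz:fractionalCY}, comparing the Serre functor of the admissible subcategory $\sF(\Db(X))$ with that of $\Db(Y)$ applied to $\cO_y$, to get $S_X(E)\cong E[n]$. Combined with $\Ext^\bullet(E,E)\cong\Ext^\bullet(\cO_y,\cO_y)$ (full faithfulness), the Bondal--Orlov criterion for point-like objects \cite[Proposition~2.2]{Bondal-Orlov} --- which is precisely where the ampleness of $K_X$ enters --- gives $E\cong\cO_p[m]$. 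Your substitute, that ``$\End(\sF(\cO_x))=k$ forces length one,'' would indeed finish the job \emph{if} one already knew the object were a shifted sheaf with zero-dimensional support; but establishing those two facts is exactly the content of the Serre-functor/Bondal--Orlov step, which you have skipped. Without it the argument does not close.
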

\begin{proof}
    By \cite[Corollary 7.5]{Kuznetsov2009Hochschild},  $\mathsf{HH}_{-\dim X}(X)\cong H^0(X,K_X)\neq 0$ is a direct summand of $\mathsf{HH}_{-n}(Y)\cong H^0(Y,K_Y)$. It follows that $\mathrm{Bs}|K_Y|$ is a proper subset of $Y$. By \cite[Theorem 1.2]{OK:nonsod} and \cite[Lemma 5.3]{Pirozhkov:delPezzoSOD} (or by \cite[Theorem 4.5]{linxun:remarkhochschild} directly), the support of $^\perp(\sF(\Db(X)))$  is contained in $\mathrm{Bs}|K_Y|$. 
    
    Let $y$ be a closed point in $Y\setminus\mathrm{Bs}|K_Y|$, then $\sF(E)=\cO_y$ for some $E\in\Db(X)$. By \cite[Lemma 2.7]{Kuz:fractionalCY}, $$\mathsf{S}_{\sF(\Db(X))}(\sF(E))=(\sF\circ\sF^*)(\mathsf{S}_{\Db(Y)}(\sF(E))=(\sF\circ\sF^*)(\cO_y[n])=\sF(E)[n].$$
    Therefore, $\mathsf{S}_X(E) = E[n]$. Together with the fact that $(E,E)^i=(\cO_y,\cO_y)^i$, by \cite[Proposition 2.2]{Bondal-Orlov}, we must have $E\cong \cO_p[m]$ for some closed point $p$ on $X$. The statement follows by Theorem \ref{thm:pointtoglobal}.
\end{proof}

\begin{Rem}
        By \cite{Olander:CRdim}, if there is a fully faithful exact function from $D^b(X)$ to $D^b(Y)$, then $\dim X\leq \dim Y$.
\end{Rem}

Another application to K3 surfaces of Picard rank $1$ will be given in Section \ref{sec:K3}.

\subsection{Isomorphic of Zariski local neighborhood}\label{sec:iso_neigh}

\begin{Lem}\label{lem:pointrhom}
    Let $X$ be a smooth projective variety, then an object $E\in\Db(X)$ is a skyscraper sheaf if and only if 
    \begin{equation}
        \label{eq:rhomcondition}
   \RHom(\cO_X(-m),E)=k \end{equation} for all $m\gg 0$.
\end{Lem}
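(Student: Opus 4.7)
The forward direction is a direct computation. For $E = \cO_x$, using that $\cO_X(m)$ is a line bundle gives
\[
\RHom(\cO_X(-m), \cO_x) \;\cong\; R\Gamma\bigl(X, \cO_x \otimes \cO_X(m)\bigr) \;\cong\; R\Gamma(X, \cO_x) \;=\; k,
\]
since tensoring a skyscraper with a line bundle produces a skyscraper and its global sections are one-dimensional. This in fact holds for every $m$, not only $m\gg 0$.

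For the converse, my plan is to combine Serre vanishing with the hypercohomology spectral sequence. Since $E \in \Db(X)$ has only finitely many nonzero cohomology sheaves $\H^q(E)$, there exists $m_0$ such that for every $m \geq m_0$ and every $q$ one has $H^p(X, \H^q(E)(m)) = 0$ for $p > 0$, and moreover $\H^q(E)(m)$ is globally generated whenever nonzero. For such $m$ the spectral sequence
\[
E_2^{p,q} = H^p(X, \H^q(E)(m)) \;\Longrightarrow\; \Ext^{p+q}(\cO_X(-m), E)
\]
degenerates at the $E_2$ page and yields isomorphisms $\Ext^i(\cO_X(-m), E) \cong H^0(X, \H^i(E)(m))$ for every $i \in \Z$.

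Reading the hypothesis $\RHom(\cO_X(-m), E) = k$ as $\Ext^0 = k$ and $\Ext^i = 0$ for $i \neq 0$, I then conclude for $m\gg 0$ that $H^0(X, \H^i(E)(m)) = 0$ for $i \neq 0$ and $H^0(X, \H^0(E)(m)) = k$. The former, paired with global generation of the twists, forces $\H^i(E) = 0$ for $i \neq 0$; hence $E$ is concentrated in a single coherent sheaf $\cF := \H^0(E)$. The latter, together with Serre vanishing, says the Hilbert polynomial $\chi(\cF(m)) = h^0(\cF(m))$ takes the value $1$ for all $m\gg 0$, and is therefore the constant polynomial $1$. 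This pins $\cF$ down as a zero-dimensional coherent sheaf of length one, so $\cF \cong \cO_x$ for a unique closed point $x\in X$. The only thing to double check is that Serre vanishing and global generation can be arranged uniformly across the finitely many $\H^q(E)$, which is immediate since $E$ is bounded; I expect no serious obstacle beyond this routine bookkeeping.
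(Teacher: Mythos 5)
Your proof is correct and takes essentially the same route as the paper's: Serre vanishing collapses the spectral sequence and forces $E$ to be a single coherent sheaf, and then the constancy of $m \mapsto \chi(\cO_X(-m), E)$ pins it down as a length-one skyscraper. The only cosmetic difference is that you read off this last step via the Hilbert polynomial, while the paper invokes Hirzebruch--Riemann--Roch to deduce $\ch_i(E)=0$ for $i<n$ and $\ch_n(E)=1$; these are equivalent encodings of the same numerical conclusion.
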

\begin{proof}
    The `only if' direction is trivial. For the 'if' direction, let $m$ be sufficiently large so that $\Hom(\cO_X(-m),\H^i(E)[j])=0$ for all $i\in\Z$ and $j\geq 1$. It follows that $$\RHom(\cO_X(-m),E)=\oplus_{i}\Hom(\cO_X(-m),\H^i(E))[-i].$$
    Therefore, the object $E$ is a coherent sheaf.
    
    By Hirzebruch--Riemann--Roch, $\ch_i(E)=0$ for $0\leq i<n$ and $\ch_n(E)=1$ where $n$ is the dimension of $X$. So $E$ must be a skyscraper sheaf.
\end{proof}
\begin{Prop}\label{prop:pointtoopen}
    Let $X$ and $Y$ be two smooth projective varieties. Assume that there exists a fully faithful admissible functor $\sF\colon\Db(X)\rightarrow \Db(Y)$ such that $\sF$ maps the skyscraper sheaf $\cO_x$ on $X$ to $\cO_y$ on $Y$ for a given closed point $x$ on $X$. Then there exist open neighborhoods $U$ of $x$ and $V$ of $y$ such that $\sF$ is a one-to-one correspondence of skyscraper sheaves in $U$ and $V$.
    
    Moreover, the induced map $f\colon U \rightarrow V$ is a homeomorphism between the spaces of closed points with the induced  Zariski topology.
\end{Prop}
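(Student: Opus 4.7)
The plan is to represent $\sF$ as a Fourier--Mukai transform and use upper semicontinuity in the resulting family, then close up by adjunction and scheme-theoretic continuity.

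First, I would invoke Orlov's representability theorem to write $\sF\cong\Phi_P$ for some bounded kernel $P\in\Db(X\times Y)$, so that $\sF(\cO_{x'})\cong\mathsf{L} i_{x'}^{*}P$ for every closed point $x'\in X$, with fiber at $x$ equal to $\cO_y$. Boundedness of $P$ together with properness of $\pi_X\colon X\times Y\to X$ supplies a uniform cohomological amplitude for the fibers and a uniform Castelnuovo--Mumford regularity bound, so that the infinite family of vanishings in Lemma~\ref{lem:pointrhom} reduces to checking $\dim_k\Ext^i_Y(\cO_Y(-m_0),\sF(\cO_{x'}))=\delta_{i,0}$ for a single sufficiently large $m_0$ independent of $x'$. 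Since the Euler characteristic $\chi(\sF(\cO_{x'})\otimes\cO_Y(m_0))$ is constant in $x'$, upper semicontinuity of the Ext dimensions will produce an open $U_0\ni x$ on which these values remain $\delta_{i,0}$; hence $\sF(\cO_{x'})\cong\cO_{f(x')}$ for a unique closed point $f(x')\in Y$. The sheaf $\H^0(P)|_{U_0\times Y}$ is then flat of relative length one over $U_0$, so its support is the graph of a morphism of schemes $f\colon U_0\to Y$.

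Second, I would apply the same argument to the right adjoint $\sF^!\colon\Db(Y)\to\Db(X)$, which is again of Fourier--Mukai type and satisfies $\sF^!(\cO_y)=\sF^!\sF(\cO_x)=\cO_x$ by full faithfulness; this produces an open $V_0\ni y$ and a morphism $g\colon V_0\to X$ with $\sF^!(\cO_{y'})=\cO_{g(y')}$. Setting $U:=U_0\cap f^{-1}(V_0)$, and applying $\sF^!$ to $\sF(\cO_{x'})=\cO_{f(x')}$ while using $\sF^!\sF\cong\id$, I get $\cO_{x'}=\sF^!(\cO_{f(x')})=\cO_{g(f(x'))}$ for every $x'\in U$, so $g\circ f=\id_U$. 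Therefore $f|_U\colon U\to g^{-1}(U)$ is a closed immersion (as a section of the separated morphism $g|_{g^{-1}(U)}$). After shrinking $U_0,V_0$ so that they are irreducible of equal dimension $\dim X=\dim Y$ (the dimension equality coming from full faithfulness applied to $\Ext^{\dim X}(\cO_x,\cO_x)\cong\Ext^{\dim Y}(\cO_y,\cO_y)$), the closed subset $f(U)$ exhausts $V:=g^{-1}(U)$ set-theoretically by dimension. Hence $f\colon U\to V$ is a bijection on closed points with continuous inverse $g|_V$, i.e., a homeomorphism in the induced Zariski topology.

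The hardest step will be the first: extracting a Zariski open neighborhood from the pointwise data $\sF(\cO_x)=\cO_y$ requires both the uniform cohomological amplitude and the uniform regularity bound supplied by the Fourier--Mukai kernel, so that Lemma~\ref{lem:pointrhom}'s infinite list of vanishings collapses to a single upper-semicontinuous condition. Once this is in place, the remaining steps are a combination of adjunction formalism and elementary scheme-theoretic arguments.
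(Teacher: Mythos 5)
Your route is genuinely different from the paper's, and closer in spirit to the paper's Section 4 (the alternative proof of Corollary~\ref{cor:pgneq0bir}) than to its proof of Proposition~\ref{prop:pointtoopen}. The paper works entirely with triangulated-category machinery: it uses Rouquier's classical generators to bound the supports of $^\perp\!(\sF(\Db X))$ and of $\Db_Z(Y)$ (Steps 1 and 3), and a flag of smooth hyperplane sections $X_0\subset\cdots\subset X_n=X$ avoiding $x$ (Step 2) so that the exact triangles $\cO_{X_{i-1}}(m+1)[1]\to\cO_{X_i}(m)\to\cO_{X_i}(m+1)$ propagate the computation of $\RHom(\sF(\cO_X(m)),\cO_p)$ to \emph{all} $m$ at once, by induction. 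You instead invoke Orlov representability and semicontinuity in the kernel.

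The crux — which you correctly identify as "the hardest step" — is where your argument has a real gap. You assert that boundedness of $P$ "supplies \ldots a uniform Castelnuovo--Mumford regularity bound," so that a \emph{single} $m_0$ reduces the infinitely many vanishings of Lemma~\ref{lem:pointrhom} to one semicontinuous condition. This is not automatic: the cohomology sheaves $\H^i(\mathsf{L}i_{x'}^*P)$ do not form a bounded family as $x'$ varies in general (their Hilbert polynomials can jump, only their alternating sum is fixed), so there is no a priori single $m_0$ for which $\dim\Ext^\bullet(\cO_Y(-m_0),\sF(\cO_{x'}))=\delta_{\bullet,0}$ forces $\sF(\cO_{x'})$ to be a skyscraper. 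Without such a bound the Ext-vanishing at one $m_0$ does not rule out a non-skyscraper complex whose hypercohomology spectral sequence happens to cancel; one would be back to taking an infinite intersection of opens, which is not open. The paper's flag-of-subvarieties induction sidesteps exactly this. (A related unjustified step: flatness of $\H^0(P)|_{U_0\times Y}$ over $U_0$, and the comparison of $\H^0(P)|_{\{x'\}\times Y}$ with $\H^0(\mathsf{L}i_{x'}^*P)$, both need Tor-vanishing that you do not address.) If you want to pursue a Fourier--Mukai-kernel route, you should replace the regularity argument by the paper's Section~4 strategy: show the support of $\sF(\cO_{x'})$ is zero-dimensional near $x$ by examining $\supp(P_R)$, and then apply the Bondal--Orlov point-object criterion (\cite[Lemma~4.5]{FourierMuaitransformsinaglebraicgeometry06}).

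Finally, for the last part of the statement you only need $f$ to be a homeomorphism of the sets of closed points in the Zariski topology, not a morphism of schemes, so the closed-immersion-via-section argument is an overreach. The paper handles continuity of $f$ and $f^{-1}$ directly: $f^{-1}(W)=\supp(\sF^*(\cO_{\overline W}))\cap U$ and $f(M)=\supp(\sF(\cO_{\overline M}))\cap V$ are closed, using only adjunction and the definition of support.
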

\begin{proof}
We may assume that both $X$ and $Y$ are irreducible with the same dimension $n$. 

\noindent\textbf{Step 1:} There is an open subset of $Y$ where all skyscraper sheaves are in the image of $\sF$. (See also \cite[Lemma 2.34]{Pirozhkov:delPezzoSOD}.)

Denote by $\pi\colon\Db(Y)\to \;\!^\perp\!(\mathsf F(\Db(X)))$ the right adjoint functor to inclusion. By \cite[Proposition 7.9]{Rouquier:dimensionoftricat}, the category $\Db(Y)$ has a classical generator $G$. It follows that $\pi(G)$ is a classical generator of $\;\!^\perp\!(\mathsf F(\Db(X)))$. Hence for any object $E$ in $\;\!^\perp\!(\mathsf F(\Db(X)))$, we have $\mathrm{supp}(E)\subset \mathrm{supp}(\pi(G))$.

Note that $\RHom(\pi(G),\cO_y)=0$, we have  $\mathrm{supp}(E)\subset \mathrm{supp}(\pi(G))\not\ni y$ for every $E$ in $\;\!^\perp\!(\mathsf F(\Db(X)))$. Hence for every $z\in Y\setminus \mathrm{supp}(\pi(G))$, the object $\cO_z$ is in $\sF(\Db(X))$.\\

\noindent \textbf{Step 2:} We construct the open subset $V$ in the statement.

Fix an embedding $X\hookrightarrow \mathbf P^N$, choose subspaces $P_0\subset P_1\subset \dots \subset P_n=\mathbf P^N$ such that $P_i\cong \mathbf P^{N-n+i}$ and $X_i:=P_i\cap X$ is smooth with dimension $i$. Moreover, we require $X_i\not\ni x$ for $i<n$. 

For every $i<n$, we have 
$$\RHom(\sF(\cO_{X_i}),\sF(\cO_x))=\RHom(\cO_{X_i},\cO_x)=0.$$
It follows that $\supp(\sF(\cO_{X_i}))\not\ni y$ for every $i<n$. 

For every $m\in\Z$ and object $E$ in $\Db(X)$, we write $E(m):=E\otimes \cO_{\mathbf P^N}(m)$. Note that
\begin{align}\label{eq:Oxextension}
    \sF(\cO_{X_i}(m+1))&\cong\sF(\Cone(\cO_{X_{i-1}}(m+1)[1]\xrightarrow{g}\cO_{X_i}(m))) \\ \notag &\cong \Cone(\sF(\cO_{X_{i-1}}(m+1)[1])\xrightarrow{\sF(g)}\sF(\cO_{X_i}(m)))
\end{align}
for all $0\leq i\leq n$ and $m$. It follows that 
\begin{align*}
&\supp(\sF(\cO_{X_i}(m+1)))\subset \supp(\sF(\cO_{X_i}(m)))\cup \supp(\sF(\cO_{X_{i-1}}(m+1))),\\
&\supp(\sF(\cO_{X_i}(m)))\subset \supp(\sF(\cO_{X_i}(m+1)))\cup \supp(\sF(\cO_{X_{i-1}}(m+1))).
\end{align*}
Note that $\cO_{X_0}(m)=\cO_{X_0}$ for every $m$. By induction, we have
$$\supp(\sF(\cO_{X_{n-1}}(m)))\subset \bigcup_{0\leq i<n}\supp(\sF(\cO_{X_i}))$$
for all $m\in\Z$. Denote by $Z_1=\bigcup_{0\leq i<n}\supp(\sF(\cO_{X_i}))$, then $Z_1$ is a closed subset and it does not contain $y$.

Note that 
$$\RHom(\sF(\cO_X),\sF(\cO_x))=\RHom(\cO_X,\cO_x)=k,$$
the character $\ch_0(\sF(\cO_X))\neq 0$. It follows that $\RHom(\sF(\cO_X),\cO_p)\neq 0$ for any closed point $p\in Y$. By the upper semicontinuity property of the function $\RHom(\sF(\cO_X),-)$ on $X$, there exists an open subset $V_1\subset Y$ such that for every $p\in V_1$,
$$\RHom(\sF(\cO_X),\cO_p)=k.$$
By \eqref{eq:Oxextension} and induction on $m$, for every point $p\in V_1\setminus Z_1$, we have 
\begin{align*}
    \RHom(\sF(\cO_X(m)),\cO_p)=k
\end{align*}
for all $m$. For every $p\in V_1\setminus (Z_1\cup\supp(\sF(G)))$, there exists an object $E_p$ in $\Db(X)$ satisfying $\sF(E_p)=\cO_p$. It follows that
$$\RHom(\cO_X(m),E_p)=\RHom(\sF(\cO_X(m)),\cO_p)=k$$
for every $m\in\Z$. By Lemma \ref{lem:pointrhom}, $E_p=\cO_q$ for some closed point $q$ on $X$.

We let $V=V_1\setminus (Z_1\cup\supp(\sF(G)))$. Denote by $Z=Y\setminus V$. Let
$$U=\{q\in X\;|\;\sF(\cO_q)=\cO_p\text{ for some point $p$ on }V\}.$$

\noindent\textbf{Step 3:} We show that $U$ is an open subset of $X$.

By \cite[Theorem 6.8]{Rouquier:dimensionoftricat}, the category $\Db_Z(Y)$ has  a classical  generator $G_Z$. Denote by $\sF^*$ the left adjoint functor of $\sF$. For any point $q\in U$, 
$$\RHom(\sF^*(G_Z),\cO_q)\cong\RHom(G_Z,\sF(\cO_q))=0.$$
Therefore, for every object $E\in\Db_Z(Y)$, by Lemma \ref{lem:suppandgenerator}, we  have$$\supp(\sF^*(E))\subset\supp(\sF^*(G_Z))\subset X\setminus U.$$

On the other hand, for any point $q'\in X\setminus U$, note that
$$\RHom(\sF(\cO_{q'}),\sF(\cO_q))=\RHom(\cO_{q'},\cO_q)=0$$
for every point $q\in U$. It follows that $\supp(\sF(\cO_{q'}))\cap V =\emptyset$. Choose a point $p'\in \supp(\sF(\cO_{q'})\subset Z$, then 
$$\RHom(\sF^*(\cO_{p'}),\cO_{q'})\cong\RHom(\cO_{p'},\sF(\cO_{q'}))\neq 0.$$
Therefore $p\in \supp(\sF^*(\cO_{p'}))\subset \supp(\sF^*(G_Z))$. It follows that 
$\supp(\sF^*(G_Z))=X\setminus U$ and $U$
 is open in $X$.\\
 
 \noindent\textbf{Step 4:} We show that the induced maps $f$ and $f^{-1}$ are continuous.
 
 Now for every closed point $q$ on $U$, we write $f(q)$ as the unique point on $V$ such that $\sF(\cO_q)\cong \cO_{f(q)}$.
 
For every Zariski closed subset $W$ in $V$, let $\overline{W}$ be its closure in $Y$. Since $\RHom(\sF^*(\cO_{\overline W}),\cO_q)\cong \RHom(\cO_{\overline W},\sF(\cO_q))$, for every $q\in U$, we have 
$$ q\in \supp(\sF^*(\cO_{\overline W}))\cap U\iff f(q)\in \overline W\cap Y = W.$$
It follows that $f^{-1}(W)=\supp(\sF^*(\cO_{\overline W}))\cap U$ and is closed in $U$. Hence, $f$ is continuous with respect to the Zariski topology. A similar argument gives the continuity of $f^{-1}$.

For every Zariski closed subset $M$ in $U$, let $\overline{M}$ be its closure in $X$. Since $\RHom(\cO_{\overline M},\cO_q)\cong \RHom(\sF(\cO_{\overline M}),\cO_{f(q)})$, for every $q\in U$.  It follows that $f(M)=\supp(\sF(\cO_{\overline M}))\cap V$ and is closed in $V$. 

Hence, both $f$ and $f^{-1}$ are continuous with respect to the Zariski topology.
 \end{proof}
 
\subsection{Isomorphism between two open subsets}\label{sec:iso_open}

Adopt the assumptions as that in Proposition \ref{prop:pointtoopen},  denote by $\sF^*$ the left adjoint functor of $\sF$.  Denote by
 \begin{align}
   \label{eq411}  U:=\{p\in X \;| \; \sF(\cO_p)=\cO_q\text{ for some }q\in Y\};\\
  \label{eq412}   V:=\{q\in Y \;| \; \sF(\cO_p)=\cO_q\text{ for some }p\in X\}.
 \end{align}
  By Proposition \ref{prop:pointtoopen}, both $U$ and $V$ are open. Denote by $f\colon U\to V$ the induced map satisfying $\sF(\cO_p)=\cO_{f(p)}$. Denote by $W=X\setminus U$ and $Z=Y\setminus V$. We have the following description for the image of $\Coh(X)$ and $\Coh(Y)$ under the functor $\sF$.

\begin{Prop}\label{prop:Fofcoherentsheaf}
     Adopt notations as above. Let $E$ be a coherent sheave on $X$ , then for $i\neq 0$, $$\supp(\H^i(\sF(E)))\subset Z.$$
     For $i=0$, we have $$\supp(\H^0(\sF(E)))\cap V= f(\supp(E))\cap V .$$
\end{Prop}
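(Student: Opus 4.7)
The plan is to exploit full faithfulness via the identifications
\[
\RHom_Y(\sF(E), \cO_q) \cong \RHom_X(E, \cO_p), \qquad \RHom_Y(\cO_q, \sF(E)) \cong \RHom_X(\cO_p, E)
\]
for each $q \in V$ with $p = f^{-1}(q) \in U$. Since $E$ is coherent, both right-hand sides have cohomology only in degrees $[0, n_X]$. Applying this to $E = \cO_p$ and comparing with Serre duality on both sides first forces $\dim X = \dim Y =: n$, which I will use throughout.

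For $\H^i(\sF(E))|_V = 0$ when $i > 0$: if $i_1 > 0$ is the largest index with $\H^{i_1}(\sF(E))|_V \neq 0$, then $\tau^{\geq i_1+1}(\sF(E))|_V = 0$, so Lemma~\ref{lem:supportbasic} produces $q \in V \cap \supp(\H^{i_1}(\sF(E)))$ with $\Hom_Y(\sF(E), \cO_q[-i_1]) \neq 0$. By full faithfulness this equals $\Ext^{-i_1}_X(E, \cO_p) = 0$ (since $E$ is a coherent sheaf and $-i_1 < 0$), a contradiction.

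For $i < 0$: let $i_0 < 0$ be the smallest index with $\H^{i_0}(\sF(E))|_V \neq 0$, and $\Gamma$ an irreducible component of $\supp(\H^{i_0}(\sF(E)))$ meeting $V$ with $\codim_Y \Gamma = c_0$. Pick $q \in \Gamma \cap V$ generic enough to avoid the sets $\supp^{c_0 + r}(\H^{i_0 + r - 1}(\sF(E)))$ for $r \geq 2$, which by Lemma~\ref{lem:suppi} each have dimension strictly less than $\dim \Gamma$. In the spectral sequence
\[
E_2^{p, k} = \Ext^p_Y(\H^{-k}(\sF(E)), \cO_q) \Rightarrow \Ext^{p+k}_Y(\sF(E), \cO_q),
\]
Lemma~\ref{lem:suppi} gives $E_2^{c_0, -i_0} \neq 0$; minimality of $i_0$ kills every incoming differential (its source involves $\H^{i_0 - r + 1}|_V = 0$ for $r \geq 2$), and the genericity of $q$ kills every outgoing differential. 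Hence $E_\infty^{c_0, -i_0} = E_2^{c_0, -i_0} \neq 0$, and full faithfulness yields $\Ext^{c_0 - i_0}_X(E, \cO_p) \neq 0$. When $E$ is locally free, $\Ext^{>0}_X(E, \cO_p) = 0$ forces $c_0 - i_0 \leq 0$, contradicting $c_0 \geq 0 > i_0$. For a general coherent $E$, I would reduce to this case by choosing a finite locally free resolution $E_\bullet \to E$ on $X$: since each $\sF(E_k)|_V$ is a sheaf concentrated in degree $0$ by the locally free case, the object $\sF(E_\bullet)|_V \simeq \sF(E)|_V$ is represented by an honest complex of sheaves on $V$, and its exactness outside degree $0$ follows by running the same spectral-sequence argument inductively on the successive kernels that appear in the resolution.

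Once both vanishings are in place, near any $q \in V$ one has $\sF(E) \cong \H^0(\sF(E))$, so $\Hom_Y(\H^0(\sF(E)), \cO_q) \cong \Hom_Y(\sF(E), \cO_q) \cong \Hom_X(E, \cO_p)$, which is nonzero if and only if $p \in \supp(E)$ by Nakayama. This identifies $\supp(\H^0(\sF(E))) \cap V = f(\supp(E) \cap U) = f(\supp(E)) \cap V$. The main obstacle is precisely the passage from the locally free case to the general coherent case: for locally free $E$ the vanishing $\Ext^{>0}_X(E, \cO_p) = 0$ instantly beats the codimension $c_0 \geq 0$ appearing on the $Y$-side, but for an arbitrary coherent $E$ the spectral-sequence argument only yields the weaker bound $c_0 \leq n + i_0$, and finishing the argument requires controlling the cohomology sheaves of all the kernels in a locally free resolution of $E$.
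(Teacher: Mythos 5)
Your handling of $i > 0$ and of the $i = 0$ support identity (which, as your spectral sequence shows, needs only the $i>0$ vanishing) matches the paper's argument. The divergence and the genuine gap are in the $i < 0$ case. Your spectral-sequence computation at a generic $q$ on a component $\Gamma \subset \supp(\H^{i_0}(\sF(E)))$ of codimension $c_0$ correctly produces $\Ext^{c_0 - i_0}_X(E, \cO_p) \neq 0$ with $c_0 - i_0 \geq 1$. But for a merely coherent $E$ the groups $\Ext^k_X(E, \cO_p)$ are allowed to be nonzero for any $0 \leq k \leq n$, so this is not a contradiction: you only learn $c_0 - i_0 \leq n$, which is compatible with, say, $c_0 = 0$ and $i_0 = -1$. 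The proposed reduction via a finite locally free resolution is circular: the first syzygy $K = \ker(L_0 \to E)$ is again just a coherent sheaf, and even granting that $\sF(K)|_V$ and $\sF(L_0)|_V$ are sheaves, nothing forces $\sF(K)|_V \to \sF(L_0)|_V$ to be injective, which is exactly what $\H^{-1}(\sF(E))|_V = 0$ would require; rerunning the same spectral-sequence computation only returns another inconclusive bound.

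The paper closes the $i<0$ case by a structurally different mechanism that routes through the left adjoint $\sF^*$. From $\Hom(\sF^*(G), \cO_q[i]) = \Hom(G, \cO_{f(q)}[i])$ one gets the companion vanishing $\supp(\H^i(\sF^*(G))) \subset W$ for $i>0$ and $G \in \Coh(Y)$. Setting $A = \tau^{\leq -1}(\sF(E))$, a Postnikov/hypercohomology spectral sequence then gives $\supp(\H^i(\sF^*(A))) \cap U = \emptyset$ for $i \geq 0$. This is precisely the hypothesis of Lemma~\ref{lem:2homtosheaf}, which kills every composition $\sF^*(A) \to E \to \cO_x[m]$: the point is that $E$ is a \emph{sheaf}, so any map out of $\tau^{\leq -1}(\sF^*(A))$ into $E$ is zero, while any map out of $\tau^{\geq 0}(\sF^*(A))$ into $\cO_x[m]$ is zero for support reasons. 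Passing this back through the adjunction (Lemma~\ref{lem:adjfacts}) and using fullness, every composition $A \to \sF(E) \to \cO_q[m]$ with $q \in V$ vanishes, and Proposition~\ref{prop:supp2} then forces $\supp(A) \cap V = \emptyset$. The extra leverage the paper uses, and that your approach lacks, is exactly this: not only $\cO_p$ but $E$ itself sits in a single cohomological degree, and that fact is exploited on the $X$-side via $\sF^*$ rather than being translated into an $\Ext$-vanishing on the $Y$-side.
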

\begin{proof}
     For every point $p\in V$, we have
     \begin{equation*}
         \hom(\sF(E),\cO_{p}[i])=\hom(E,\cO_{f^{-1}(p)}[i])=0
     \end{equation*}
     when $i<0$. By Lemma \ref{lem:supportbasic}, 
     \begin{equation}\label{eq37}
         \text{$\supp(\H^i(\sF(E))\subset Z$ when $i>0$. }
     \end{equation}

     For every coherent sheaf $G$ on $Y$ and point $q\in U$, we have 
     \begin{equation*}
         \hom(\sF^*(G),\cO_q[i])=\hom(G,\sF(\cO_q)[i])=\hom(G,\cO_{f(q)}[i])=0
     \end{equation*}
     when $i<0$. By Lemma \ref{lem:supportbasic},  
     \begin{equation}\label{eq38}
         \supp(\H^i(\sF^*(G))\subset W \text{ when $i>0$.} 
     \end{equation}

     Denote by $A$ the truncated object $\tau^{\leq -1}(\sF(E))$ and $g\colon A\to \sF(E)$ the canonical morphism. Denote by $g^*\colon\sF^*(A)\to E$ the adjoint morphism of $g$, then for every point $x\in U$ and  morphism $h\colon E\to \cO_x[m]$. By Lemma \ref{lem:adjfacts} (1), the adjoint morphism of $\sF(h)\circ g\colon A\to \sF(E)\to \sF(\cO_x[m])$ is $h\circ g^*\colon\sF^*(A)\to E\to \cO_x[m]$.

     Since $\H^i(A)=0$ when $i\geq 0$, by \eqref{eq38}, $\supp\left(\H^i(\sF^*(A))\right)\subset W$ when $i\geq 0$. By Lemma \ref{lem:2homtosheaf}, the composition of morphisms $h\circ g^*$ is always zero. 

     Therefore, the morphism $\sF(h)\circ g=0$. As $\sF$ is fully faithful, $\sF(h)$ can be any morphism from $\sF(E)$ to $\cO_x[m]$. By Proposition \ref{prop:supp2}, $\supp(A)\cap U=\emptyset $. So $\supp(\H^i(\sF(E)))=\supp(\H^i(A))\subset Z$ when $i<0$. Together with \eqref{eq37}, we have the statement in the case of $i\neq 0$.

     For the case $i=0$, note that for every point $p\in V$, we have 
     \begin{equation}
    \hom(\H^0(\sF(E)),\cO_{p})=\hom(\sF(E),\cO_{p})=\hom(E,\cO_{f^{-1}(p)}).
     \end{equation}
     The statement holds. 
\end{proof}

\begin{Cor}\label{cor:imageofobject}
    Let $E$ be an object in $\Db(X)$, then 
    $$\supp\left(\H^i(\sF(E))\right)\cap V= f\left(\supp(\H^i(E))\right)\cap V.$$
\end{Cor}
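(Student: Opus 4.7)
The plan is to reduce the statement to the coherent sheaf case, which is Proposition~\ref{prop:Fofcoherentsheaf}, by invoking Proposition~\ref{prop:functorrestoUV}(1) to pass from an arbitrary bounded complex $E$ to its cohomology sheaves $\H^i(E)$. Concretely, once we know the equivalence
\[
\H^i(\sF(E))|_V \;\cong\; \H^0(\sF(\H^i(E)))|_V,
\]
Proposition~\ref{prop:Fofcoherentsheaf} applied to the coherent sheaf $\H^i(E)$ immediately yields
\[
\supp\!\left(\H^i(\sF(E))\right)\cap V \;=\; \supp\!\left(\H^0(\sF(\H^i(E)))\right)\cap V \;=\; f(\supp(\H^i(E)))\cap V,
\]
which is the desired conclusion.

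The substantive task is therefore to verify that $\sF$ satisfies the two hypotheses of Proposition~\ref{prop:functorrestoUV} with respect to the open sets $U\subset X$ and $V\subset Y$ from \eqref{eq411}--\eqref{eq412}. Hypothesis (a), that $\supp(\H^i(\sF(\cF)))\subset Z$ for every $\cF\in\Coh(X)$ and $i\neq 0$, is exactly the first assertion of Proposition~\ref{prop:Fofcoherentsheaf}, so nothing new is needed there. For hypothesis (b), suppose $E\in\Db(X)$ has $\supp(E)\subset W$. Then each cohomology sheaf $\H^i(E)$ is a coherent sheaf supported on $W$, so Proposition~\ref{prop:Fofcoherentsheaf} gives both $\supp(\H^j(\sF(\H^i(E))))\subset Z$ for $j\neq 0$ and
\[
\supp\!\left(\H^0(\sF(\H^i(E)))\right)\cap V \;=\; f(\supp(\H^i(E)))\cap V \;=\; \emptyset,
\]
since $\supp(\H^i(E))\cap U=\emptyset$ and $f$ only takes values in $V$. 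Hence $\supp(\sF(\H^i(E)))\subset Z$ for every~$i$.

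Next I would pass from the individual cohomology sheaves to the complex $E$ itself by induction on the number of nonzero $\H^i(E)$, using the truncation triangles
\[
\tau^{\leq k-1}(E) \longrightarrow \tau^{\leq k}(E) \longrightarrow \H^k(E)[-k] \xrightarrow{+}.
\]
Applying the exact functor $\sF$ and taking supports, the outer two terms have support in $Z$ by the inductive hypothesis and the previous step, so $\supp(\sF(\tau^{\leq k}(E)))\subset Z$ as well. For $k\gg 0$ this gives $\supp(\sF(E))\subset Z$, establishing hypothesis (b).

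With both hypotheses in hand, Proposition~\ref{prop:functorrestoUV}(1) yields the required isomorphism $\H^i(\sF(E))|_V\cong \H^0(\sF(\H^i(E)))|_V$, and combining it with Proposition~\ref{prop:Fofcoherentsheaf} completes the proof. The only non-routine step is the verification of hypothesis (b), where the main subtlety is to bootstrap from the sheaf-theoretic statement of Proposition~\ref{prop:Fofcoherentsheaf} to a statement about arbitrary bounded complexes; this is handled cleanly by induction on the cohomological amplitude via truncation triangles.
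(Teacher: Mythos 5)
Your proof is correct, and the mathematical core --- the truncation induction that bootstraps the sheaf case of Proposition~\ref{prop:Fofcoherentsheaf} to arbitrary bounded complexes --- is the same one the paper implicitly relies on (the corollary is stated without proof). The one organizational difference is where you place the work: the paper proves Corollary~\ref{cor:imageofobject} directly and then, in the proof of Proposition~\ref{prop:UVisom}, cites Proposition~\ref{prop:Fofcoherentsheaf} together with this corollary to verify hypotheses (a) and (b) of Proposition~\ref{prop:functorrestoUV}; you instead verify hypothesis (b) first as a standalone step (it is a special case of the corollary, where $f(\supp(\H^i(E)))\cap V=\emptyset$) and then invoke Proposition~\ref{prop:functorrestoUV}(1) as a black box to get $\H^i(\sF(E))|_V\cong \H^0(\sF(\H^i(E)))|_V$. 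This is logically non-circular since Proposition~\ref{prop:functorrestoUV} is a general statement whose hypotheses you check independently of the corollary. Your route has the minor advantage of reusing the packaged induction in Proposition~\ref{prop:functorrestoUV}(1) instead of re-deriving it, at the mild cost of front-loading the verification of hypothesis (b); either way, the verification of (b) is needed downstream in Proposition~\ref{prop:UVisom}, so nothing is wasted.
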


\begin{Prop}\label{prop:Fstarofcoherentsheaf}
     Adopt notations as above. Let  $E$ be a coherent sheave on $Y$, then for $i\neq 0$, $$ \supp(\H^i(\sF^*(E)))\subset W.$$
\end{Prop}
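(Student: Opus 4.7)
This proposition is the dual of Proposition \ref{prop:Fofcoherentsheaf}, but the argument used there does not dualize directly: its key use of $\sF$ being fully faithful has no analogue for $\sF^*$. I would instead argue via the right mutation triangle of $E$ with respect to the admissible subcategory $\sF(\Db(X))\subset \Db(Y)$, which reduces the statement to Corollary \ref{cor:imageofobject}, already in hand.

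The first step is to form the triangle
$$R(E)\longrightarrow E\longrightarrow \sF(\sF^*(E))\xrightarrow{+}$$
with $R(E)\in{}^\perp\sF(\Db(X))$. By Step 1 of the proof of Proposition \ref{prop:pointtoopen}, every object of ${}^\perp\sF(\Db(X))$ is supported inside $\supp(\pi(G))$, and $V$ is chosen to avoid $\supp(\pi(G))$. Hence $R(E)|_V=0$, so the associated long exact sequence of cohomology sheaves restricted to $V$ gives isomorphisms $\H^i(E)|_V \cong \H^i(\sF(\sF^*(E)))|_V$ for every $i\in\Z$. Since $E\in\Coh(Y)$, the left-hand side vanishes for $i\neq 0$, and so does the right.

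The second step is to apply Corollary \ref{cor:imageofobject} to $\sF^*(E)\in\Db(X)$:
$$\supp\bigl(\H^i(\sF(\sF^*(E)))\bigr)\cap V = f\bigl(\supp(\H^i(\sF^*(E)))\bigr)\cap V.$$
For $i\neq 0$ the left-hand side is empty, and since $f\colon U\to V$ is a bijection (Proposition \ref{prop:pointtoopen}), this forces $\supp(\H^i(\sF^*(E)))\cap U=\emptyset$, i.e.\ $\supp(\H^i(\sF^*(E)))\subset W$, as desired. The only subtle point is verifying that $R(E)$ is supported away from $V$, which is built into the construction of $V$. The strategy works here (but not directly for Proposition \ref{prop:Fofcoherentsheaf}) precisely because for an object of $\sF(\Db(X))$ the analogous right mutation is trivial and yields no information.
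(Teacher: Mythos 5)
Your proof is correct and is essentially the paper's own argument: the paper likewise observes that $\Cone(E\to\sF\sF^*(E))$ lies in $^\perp(\sF(\Db(X)))$, hence is supported in $Z$, deduces $\supp(\H^i(\sF\sF^*(E)))\subset Z$ for $i\neq 0$, and concludes via Corollary \ref{cor:imageofobject}. You have merely spelled out the long-exact-sequence step and the bijectivity of $f$ that the paper leaves implicit.
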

\begin{proof}
  Note that $\Cone\left(E\xrightarrow{\mathrm{can}}\sF(\sF^*(E))\right)\in \;\!^\perp\!(\sF(\Db(X)))$. Since every object in  $^\perp\!(\sF(\Db(X)))$ has support contained in $Z$, we have $\H^i(\sF(\sF^*(E)))\subset Z$ when $i\neq 0$. The statement is followed by Corollary \ref{cor:imageofobject}.
\end{proof}

\begin{Cor}\label{cor:Fstarobjects}
  Let $E$ be an object in $\Db(Y)$, then 
    $$\supp\left(\H^i(\sF^*(Y))\right)\cap U= f^{-1}\left(\supp(\H^i(\sF^*(Y)))\right)\cap U.$$
\end{Cor}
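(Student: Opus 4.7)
The plan is to deduce this corollary from Proposition \ref{prop:Fstarofcoherentsheaf} by first showing that $\sF^*$ meets the hypotheses of Proposition \ref{prop:functorrestoUV} with the roles of $X,Y$ and $U,V$ interchanged, then applying part (1) of that proposition to reduce an arbitrary complex to its cohomology sheaves, and finally handling the coherent sheaf case by a direct adjunction computation.

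First, I would verify the two hypotheses for $\sF^* \colon \Db(Y) \to \Db(X)$ with respect to $V \subset Y$ and $U \subset X$. Hypothesis (a), that $\supp(\H^i(\sF^*(\cG))) \subset W$ for every $\cG \in \Coh(Y)$ and $i \neq 0$, is precisely Proposition \ref{prop:Fstarofcoherentsheaf}. For hypothesis (b), given $E \in \Db(Y)$ with $\supp(E) \subset Z$ and $x \in U$, adjunction yields
$$\RHom(\sF^*(E), \cO_x) = \RHom(E, \sF(\cO_x)) = \RHom(E, \cO_{f(x)}) = 0,$$
since $f(x) \in V$ and $\supp(E) \cap V = \emptyset$. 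Hence $\supp(\sF^*(E)) \cap U = \emptyset$, i.e., $\supp(\sF^*(E)) \subset W$.

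Second, I would apply Proposition \ref{prop:functorrestoUV}(1) to $\sF^*$, obtaining
$$\H^i(\sF^*(E))|_U \cong \H^0(\sF^*(\H^i(E)))|_U$$
for every $E \in \Db(Y)$ and every $i \in \Z$. This reduces the problem to identifying $\supp(\H^0(\sF^*(\cG))) \cap U$ for $\cG \in \Coh(Y)$. For such a $\cG$ and $x \in U$, Proposition \ref{prop:Fstarofcoherentsheaf} implies that $\H^i(\sF^*(\cG))$ has no support at $x$ when $i \neq 0$, so a short truncation argument gives $\RHom(\sF^*(\cG), \cO_x) \cong \RHom(\H^0(\sF^*(\cG)), \cO_x)$; by adjunction this also equals $\RHom(\cG, \cO_{f(x)})$. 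Thus $x \in \supp(\H^0(\sF^*(\cG)))$ if and only if $f(x) \in \supp(\cG)$, and combining with the reduction step yields the corollary.

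I do not anticipate any serious obstacle. The only small point needing care is the truncation argument identifying $\RHom(\sF^*(\cG), \cO_x)$ with $\RHom(\H^0(\sF^*(\cG)), \cO_x)$, which must use both the $i \geq 1$ and $i \leq -1$ parts of Proposition \ref{prop:Fstarofcoherentsheaf} to kill the contributions from $\tau^{\geq 1}(\sF^*(\cG))$ and $\tau^{\leq -1}(\sF^*(\cG))$ when mapping to $\cO_x$ with $x \in U$.
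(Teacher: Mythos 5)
Your proof is correct. Note first that the statement as printed in the paper contains a typo: it should read $\supp(\H^i(\sF^*(E)))\cap U = f^{-1}(\supp(\H^i(E)))\cap U$, in parallel with Corollary~\ref{cor:imageofobject}; you implicitly read it this way.

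Your route is, however, genuinely different from the one the paper signals. The proof of Proposition~\ref{prop:Fstarofcoherentsheaf} works by putting $E$ into the mutation triangle $\pi(E)\to E\to\sF\sF^*(E)\xrightarrow{+}$, noting that $\pi(E)\in\;\!^\perp(\sF(\Db(X)))$ has support in $Z$ so $\H^i(\sF\sF^*(E))|_V\cong\H^i(E)|_V$ for all $i$, and then quoting Corollary~\ref{cor:imageofobject} applied to $\sF^*(E)$. Running the same argument for an arbitrary $E\in\Db(Y)$ (not just a sheaf) immediately gives $\supp(\H^i(E))\cap V = f(\supp(\H^i(\sF^*(E))))\cap V$ for all $i$, which is the corollary. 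That is almost certainly the intended proof, and it is shorter than yours. Your argument instead verifies the two hypotheses of Proposition~\ref{prop:functorrestoUV} for $\sF^*$ with the roles of $(U,X)$ and $(V,Y)$ swapped, applies part~(1) of that proposition, and handles the $i=0$ case by a truncation-plus-adjunction computation. This is more work but it has a genuine advantage: it does not invoke Corollary~\ref{cor:imageofobject} (which the paper also leaves unproved), and—more importantly—it supplies a direct, adjunction-only verification of hypothesis~(b) for $\sF^*$. The paper, in the proof of Proposition~\ref{prop:UVisom}, cites Proposition~\ref{prop:Fstarofcoherentsheaf} together with Corollary~\ref{cor:Fstarobjects} to claim that $\sF^*$ satisfies those hypotheses; if Corollary~\ref{cor:Fstarobjects} were itself proved using Proposition~\ref{prop:functorrestoUV}(1), that would be circular, and your direct check of hypothesis~(b) is exactly what breaks the circle. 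So your proof is both correct and, from the point of view of logical hygiene, slightly preferable, at the price of a longer argument.
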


\begin{Prop}\label{prop:UVisom}
    The subvariety $U$ is isomorphic to $V$.
\end{Prop}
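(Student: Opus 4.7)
The plan is to promote the set-theoretic homeomorphism $f$ to a scheme-theoretic isomorphism by exhibiting an equivalence of abelian categories $\Coh(U)\cong \Coh(V)$ and then invoking Gabriel's reconstruction theorem.

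First I would check that both $\sF$ and its left adjoint $\sF^*$ fit the framework of Proposition \ref{prop:functorrestoUV} (with respect to $(U,V)$ and $(V,U)$ respectively). Condition (a) for $\sF$ is immediate from Proposition \ref{prop:Fofcoherentsheaf}, and condition (b) from Proposition \ref{prop:Fofcoherentsheaf} together with Corollary \ref{cor:imageofobject}; the analogous statements for $\sF^*$ come from Proposition \ref{prop:Fstarofcoherentsheaf} and Corollary \ref{cor:Fstarobjects}. Applying Proposition and Definition \ref{propdef:FrestrictoU} we obtain additive functors
\[
\sF|_U\colon \Coh(U)\to\Coh(V),\qquad \sF^*|_V\colon \Coh(V)\to\Coh(U).
\]

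Next I would show these are quasi-inverse equivalences of abelian categories. Because $\sF$ is fully faithful, the unit $\id_{\Db(X)}\to \sF^*\sF$ is an isomorphism, and so by Proposition \ref{prop:composition} we get natural isomorphisms
\[
\id_{\Coh(U)}\cong (\sF^*\circ \sF)|_U\cong \sF^*|_V\circ \sF|_U.
\]
For the other direction I would analyze the counit $\epsilon\colon \sF\sF^*\to \id_{\Db(Y)}$, whose cone $C$ lies in ${}^\perp(\sF(\Db(X)))$. Since $\cO_q=\sF(\cO_{f^{-1}(q)})$ for every $q\in V$, one has $\RHom(\cO_q,C)=0$ for all $q\in V$, so $\supp(C)\subset Z$. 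For any $\cG\in\Coh(V)$ and any representative $\cG_j\in\Coh(Y)$ with $\cG_j|_V\cong \cG$, the triangle $\sF\sF^*(\cG_j)\to \cG_j\to C_j\xrightarrow{+}$ gives $\H^0(\sF\sF^*(\cG_j))|_V\cong \H^0(\cG_j)|_V\cong \cG$; combined with Proposition \ref{prop:composition} this yields $\sF|_U\circ \sF^*|_V\cong \id_{\Coh(V)}$.

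Finally, the equivalence $\sF|_U\colon \Coh(U)\xrightarrow{\sim} \Coh(V)$ of Noetherian abelian categories induces an isomorphism of schemes $U\cong V$ by Gabriel's reconstruction theorem. The main obstacle is step three: although it is morally clear that ``$\sF$ restricted is an equivalence'', one must be careful that the restricted composition computed via Proposition \ref{prop:composition} is compatible with the $\Db$-level counit, which is what forces the use of the support statement $\supp(C)\subset Z$ and the truncation bookkeeping built into Proposition and Definition \ref{propdef:FrestrictoU}. Once that is in place, the appeal to Gabriel is standard.
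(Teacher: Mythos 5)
Your proposal follows essentially the same route as the paper: restrict $\sF$ and $\sF^*$ via Proposition and Definition \ref{propdef:FrestrictoU}, show they are quasi-inverse using Proposition \ref{prop:composition} together with full-faithfulness on one side and the mutation triangle (whose cone has support in $Z$) on the other, and conclude by Gabriel reconstruction. The only slip is cosmetic: the paper takes $\sF^*$ to be the \emph{left} adjoint of $\sF$, so the relevant natural transformations are $\Id_{\Db(Y)}\to\sF\circ\sF^*$ (unit) and $\sF^*\circ\sF\to\Id_{\Db(X)}$ (counit) rather than the reversed arrows you wrote, but this does not affect the argument since the cone of the mutation triangle lies in $^\perp(\sF(\Db(X)))$ and hence has support in $Z$ either way.
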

\begin{proof}
    By Proposition \ref{prop:Fofcoherentsheaf} and Corollary \ref{cor:imageofobject}, the functor $\sF$ satisfies the assumptions in Proposition \ref{prop:functorrestoUV} with respect to $(U, X)\to(V, Y)$. By Proposition and Definition \ref{propdef:FrestrictoU}, we can define the functor $\sF|_U$ from $\Coh(U)$ to $\Coh(V)$. 
    
    By Proposition \ref{prop:Fstarofcoherentsheaf} and Corollary \ref{cor:Fstarobjects}, the functor $\sF^*$ satisfies the assumptions in Proposition \ref{prop:functorrestoUV} with respect to $(V,Y)\to(U,X)$. By Proposition and Definition \ref{propdef:FrestrictoU}, we can define the functor $\sF^*|_V$ from $\Coh(V)$ to $\Coh(U)$.
    
    By Proposition \ref{prop:composition}, $\sF^*|_V\circ\sF|_U=(\sF^*\circ\sF)|_U=\Id_{\Db(X)}|_U=\Id_{\Coh(U)}$. On the other direction, we have $\sF|_U\circ \sF^*|_V=(\sF\circ\sF^*)|_V$. In the canonical distinguished triangle of functors $\pi\to\Id_{\Db(Y)}\to\sF\circ\sF^*$, as $\supp(\pi(E))\cap V=\emptyset$ for every object in $\Db(Y)$, $(\sF\circ\sF^*)|_V=(\Id_{\Db(Y)})|_V$.

   Therefore, $\Coh(U)$ and $\Coh(V)$ are equivalent. By Gabriel--Rosenberg reconstruction theorem, $U$ is isomorphic to $V$.
\end{proof}

Now Theorem \ref{thm:pointtoglobal} follows directly:

\begin{proof}[Proof of Theorem \ref{thm:pointtoglobal}]
   The statement follows from Proposition \ref{prop:pointtoopen} and \ref{prop:UVisom}. 
\end{proof}

\subsection{K3 surface case}\label{sec:K3}

Thanks to the result in \cite{K3Pic1} on the stability manifold of K3 surface with Picard rank one, our main Theorem \ref{thm:pointtoglobal} also applies to this case.

Let $S$ be a smooth projective K3 surface with Picard rank one. An object $E\in\Db(S)$ is called simple and semirigid if $$\hom(E,E[i])=\begin{cases} 1 & \text{ when } i=0;\\
   2 & \text{ when } i=1;\\
   0 & \text{ when }i<0.\end{cases}$$
    
\begin{Lem}\label{lem:k3pointobj}
    Let $E$ be a simple semirigid object whose character $v_E$ is primitive. Then there exists a Fourier--Mukai partner $S'$ of $S$ and an exact equivalence $\sF\colon \Db(S)\to\Db(S')$ such that $\sF(E)=\cO_p$ for some closed point $p\in S'$.
\end{Lem}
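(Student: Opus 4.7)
The plan is to apply the two results of Bayer--Bridgeland \cite[Propositions 3.15 and 6.3]{K3Pic1} quoted in the introduction. First, I would check that $E$ has the correct numerical invariants to be a ``point-like'' object in the Mukai lattice. Since $E$ is simple and semirigid, and Serre duality on a K3 gives $\hom(E,E[2])=\hom(E,E)=1$, the Euler pairing computes
\[
\chi(E,E)=\hom(E,E)-\hom(E,E[1])+\hom(E,E[2])=1-2+1=0.
\]
Using $\chi(E,E)=-\langle v_E, v_E\rangle$ on a K3, this gives $v_E^2 = 0$. Together with the hypothesis that $v_E$ is primitive, this places $E$ exactly in the setting covered by Bayer--Bridgeland.

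Next, I would apply \cite[Proposition 3.15]{K3Pic1} to produce a stability condition $\sigma$ in the distinguished component $\Stab^\dagger(S)$ with respect to which the simple semirigid object $E$ is $\sigma$-stable. This is the step where the Picard rank one assumption is used crucially: the result relies on a rather precise analysis of the walls in $\Stab^\dagger(S)$, which is tractable only because $\NS(S) = \Z$ keeps the space of stability conditions essentially two-dimensional transverse to the $\glt$-action. Once $E$ is $\sigma$-stable, I would invoke \cite[Proposition 6.3]{K3Pic1}: since $v_E$ is primitive, $v_E^2 = 0$, and $\sigma$ is generic (after a small perturbation if necessary, which preserves $\sigma$-stability of $E$), the moduli space $S' := M_\sigma(v_E)$ of $\sigma$-stable objects of class $v_E$ is a smooth projective K3 surface, a Fourier--Mukai partner of $S$, carrying a (quasi-)universal family $\cU$ on $S \times S'$, and the associated Fourier--Mukai transform $\sF := \Phi_\cU \colon \Db(S) \to \Db(S')$ is an exact equivalence.

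Finally, by construction the object $E$ represents a closed point $p := [E] \in S' = M_\sigma(v_E)$, i.e.\ the restriction of $\cU$ to $S \times \{p\}$ recovers $E$. Unwinding the definition of $\Phi_\cU$, this is precisely the statement that $\sF(E) \cong \cO_p$ (possibly up to shift, which can be absorbed into the choice of kernel), completing the proof.

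The main obstacle is of course producing the stability condition $\sigma$ in the second step. Verifying that $E$ is semistable along a path in $\Stab^\dagger(S)$ ending in a chamber where $E$ is actually stable, without being destabilized by subobjects of smaller phase appearing across a wall, is the technical heart of Bayer--Bridgeland's argument and is the reason the present lemma is restricted to Picard rank one. Assuming those two results as black boxes, however, the remaining content of the proof is essentially the Mukai-lattice calculation $v_E^2 = 0$ and the observation that $E$ corresponds to its own isomorphism class in the moduli space.
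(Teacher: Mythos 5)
Your overall strategy matches the paper's, and the observation that semirigidity forces $v_E^2 = \langle v_E, v_E\rangle = 0$ via the Mukai pairing is a useful preliminary that the paper leaves implicit. However, the key second step is not correctly sourced. Propositions 3.15 and 6.3 of \cite{K3Pic1} are both used in the paper to produce a stability condition $\sigma\in\Stab^\dagger(S)$ at which $E$ is $\sigma$-quasistable (whence $\sigma$-stable, since $v_E$ is primitive); neither proposition asserts that $M_\sigma(v_E)$ is a smooth projective K3 surface carrying a universal family inducing a derived equivalence. To get that, the paper takes a detour: it applies \cite[Proposition 13.2]{Bridgeland:K3} to find an autoequivalence $\Phi$ carrying $\sigma$ to a stability condition at which skyscraper sheaves are semistable, deforms to make skyscrapers stable and $\Phi(\sigma)$ general and of the geometric form $\sigma_{\beta,\omega}$, and only then invokes \cite[Theorem 0.0.2]{MYY2} and \cite[Proposition 16.2.3]{Huybrechts:LecturesonK3book} to realize the moduli space as a Fourier--Mukai partner with the required universal family. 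Your proposal skips this reduction and attributes the entire moduli-space package to a proposition that does not contain it; as written, the step is a gap unless you supply a correct reference (for example the Bayer--Macr\`i machinery, or the chain the paper uses).

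A second, smaller point: with $\cU$ a universal family on $S\times S'$ and $\cU|_{S\times\{p\}}\cong E$, the Fourier--Mukai transform $\Phi_\cU$ in its natural direction goes $\Db(S')\to\Db(S)$ and sends $\cO_p\mapsto E$. To produce $\sF\colon\Db(S)\to\Db(S')$ with $\sF(E)\cong\cO_p$ you should take the inverse (or a suitable adjoint) of this equivalence rather than ``unwind the definition'' of $\Phi_\cU$ directly. This is easily fixed but should be stated, since the lemma specifies the direction of the functor.
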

\begin{proof}
   By \cite[Proposition 3.15 and 6.3]{K3Pic1} (see the definition of the width function $w_E$ in the section above Lemma 3.8, the definition of $W_{=0}$ at the beginning of Section 6, and the definition of $W_{\Z}$ above Remark 3.12 in \cite{K3Pic1}), there exists a stability condition $\sigma\in\Stab^\dag(S)$ such that $E$ is $\sigma$-quasistable. As $v_E$ is primitive, $E$ is $\sigma$-stable.
   
   By \cite[Proposition 13.2]{Bridgeland:K3} (more precisely, the fourth paragraph of the proof), there is an autoequivalence $\Phi$ such that skyscraper sheaves are $\Phi(\sigma)$-semistable. By deforming $\sigma$ if necessary, we may assume that skyscraper sheaves are $\Phi(\sigma)$-stable and $\Phi(\sigma)$ is general with respect to $\Phi(v_E)$. In particular, $\Phi(\sigma)$ is in the form of $\sigma_{\beta,\omega}$ as that constructed in \cite[Section 6]{Bridgeland:K3}. The statement then follows from \cite[Theorem 0.0.2]{MYY2} and \cite[Proposition 16.2.3]{Huybrechts:LecturesonK3book}.
\end{proof}

\begin{Thm}\label{thm:K3pic1}
  Let $S$ be a smooth projective K3 surface of Picard rank one. Then a smooth projective surface $T$ admits $\Db(S)$ as a semiorthogonal factor if and only if $T$ is birational to a Fourier--Mukai partner of $S$.
\end{Thm}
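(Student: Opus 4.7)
The plan is to treat the two implications separately; the ``if'' direction is essentially formal, while the ``only if'' direction will be reduced to Theorem \ref{thm:pointtoglobal} via Lemma \ref{lem:k3pointobj}.

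For the ``if'' direction, suppose $T$ is birational to a Fourier--Mukai partner $S'$ of $S$. Every Fourier--Mukai partner of a K3 surface is again a K3 surface, so $S'$ is minimal, and by the classification of smooth projective surfaces $T$ is obtained from $S'$ by a finite sequence of blowups at closed points. Hence the pullback $\pi^*\colon \Db(S') \to \Db(T)$ along the composed blowup $\pi\colon T \to S'$ is fully faithful and realizes $\Db(S')$ as an admissible subcategory of $\Db(T)$. Composing with any Fourier--Mukai equivalence $\Db(S) \simeq \Db(S')$ then exhibits $\Db(S)$ as a semiorthogonal factor of $\Db(T)$.

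For the ``only if'' direction, let $\sF\colon \Db(S) \hookrightarrow \Db(T)$ be a fully faithful exact functor. I would first mimic the proof of Corollary \ref{cor:pgneq0bir}: by Kuznetsov's Hochschild homology decomposition, $\mathsf{HH}_{-2}(T)$ contains $\mathsf{HH}_{-2}(S) \cong H^0(S, K_S) = k$ as a direct summand, so $h^0(T, K_T) \geq 1$ and $\mathrm{Bs}|K_T|$ is a proper subset of $T$. By \cite[Theorem 1.2]{OK:nonsod}, \cite[Lemma 5.3]{Pirozhkov:delPezzoSOD}, and \cite[Theorem 4.5]{linxun:remarkhochschild}, the support of $^\perp(\sF(\Db(S)))$ is contained in $\mathrm{Bs}|K_T|$, so for every $t \in T \setminus \mathrm{Bs}|K_T|$ the skyscraper $\cO_t$ lies in $\sF(\Db(S))$. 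Writing $\sF(E_t) \cong \cO_t$, full faithfulness of $\sF$ gives $\hom(E_t, E_t[i]) = \hom(\cO_t, \cO_t[i])$, which equals $1, 2, 1$ for $i = 0, 1, 2$ and vanishes otherwise; hence $E_t$ is simple and semirigid.

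The next step would be to verify that the Mukai vector $v(E_t) \in H^*_\alg(S, \Z)$ is primitive. Once this is known, Lemma \ref{lem:k3pointobj} produces a Fourier--Mukai partner $S'$ of $S$ and an exact equivalence $\Phi\colon \Db(S) \xrightarrow{\sim} \Db(S')$ with $\Phi(E_t) \cong \cO_p$ for some closed point $p \in S'$; then the composition $\sF \circ \Phi^{-1}\colon \Db(S') \to \Db(T)$ is a fully faithful exact functor sending $\cO_p$ to $\cO_t$, and since $\dim S' = \dim T = 2$, Theorem \ref{thm:pointtoglobal} forces $S'$ and $T$ to be birational. Establishing primitivity of $v(E_t)$ is what I expect to be the main obstacle. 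The relevant input is that varying $t$ over the open $T \setminus \mathrm{Bs}|K_T|$ produces a $2$-dimensional family of pairwise $\RHom$-orthogonal simple semirigid objects in $\Db(S)$ with constant Mukai vector $v$; on the other hand, on a K3 surface of Picard rank one, a non-primitive isotropic Mukai vector $v = nw$ (with $n \geq 2$ and $w$ primitive isotropic) admits no $\sigma$-stable objects, since every $\sigma$-semistable representative is strictly semistable with Jordan--H\"older factors of Mukai vector $w$ in the $2$-dimensional moduli space $M_\sigma(w)$. Combining the $\sigma$-quasistability guaranteed by \cite[Proposition 3.15]{K3Pic1} with the rigidity forced by the existence of a $2$-dimensional family of mutually orthogonal simple objects should rule out $n \geq 2$; making this precise via the fine structure of $\Stab^\dag(S)$ from \cite{K3Pic1} is the delicate point.
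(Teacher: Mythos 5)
Your overall architecture for the ``only if'' direction matches the paper's: reuse the argument of Corollary \ref{cor:pgneq0bir} to find, for each $t\in T\setminus\mathrm{Bs}|K_T|$, a simple semirigid object $E_t\in\Db(S)$ with $\sF(E_t)\cong\cO_t$; then verify primitivity of $v(E_t)$ so that Lemma \ref{lem:k3pointobj} applies; then invoke Theorem \ref{thm:pointtoglobal}. The ``if'' direction you give is also fine and amounts to Orlov's blowup formula, which is what the paper cites.

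However, there is a genuine gap at precisely the point you flag as ``the delicate point,'' and the route you sketch to close it (arguing via $\sigma$-quasistability, strict semistability of non-primitive isotropic classes, and rigidity coming from a $2$-dimensional orthogonal family) is both much harder than necessary and not actually carried out. You have not shown that a non-primitive $v(E_t)=nw$ leads to a contradiction; the claim that a non-primitive isotropic Mukai vector ``admits no $\sigma$-stable objects'' would require argument, and even granting it you would still need to rule out $E_t$ being strictly quasistable, which is exactly the case Lemma \ref{lem:k3pointobj} does \emph{not} cover when $v$ is not primitive. The paper closes this gap in one line by working downstream rather than upstream: since $\sF$ is fully faithful it is of Fourier--Mukai type, hence induces an integral map on numerical invariants, and $\sF(E_t)\cong\cO_t$ has primitive character $[\mathrm{pt}]$ in $H^*_{\alg}(T,\Z)$. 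If $v(E_t)$ were divisible by some $n\geq 2$ then so would its image $[\mathrm{pt}]$, a contradiction. No stability-condition input is needed for primitivity; the stability machinery enters only afterwards, via Lemma \ref{lem:k3pointobj}, once primitivity is already in hand. You should replace the moduli-theoretic sketch with this cohomological observation, after which the rest of your argument goes through.
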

\begin{proof}
    The `if' direction follows from Orlov's blowup formula \cite[Theorem 4.3]{Orlovblowup}.
    
   For the `only if' direction, denote by $\sF\colon \Db(S)\to\Db(T)$ the fully faithful exact functor. By the same argument as that in Corollary \ref{cor:pgneq0bir}, for every closed point $y\in T\setminus \mathrm{Bs}|K_T|$, the skyscraper sheaf $\cO_y$ is $\sF(E)$ for some  $E\in\Db(S)$ with $\hom(E,E[i])=\hom(\cO_y,\cO_y[i])$. As the character of $\cO_y$ in $H^*_{\mathrm{alg}}(T,\Z)$ is primitive, the character of $E$ is primitive. By Lemma \ref{lem:k3pointobj}, we may assume that there is a Fourier--Mukai partner $S'$ of $S$ and fully faithful exact functor $\sF'\colon \Db(S')\to \Db(T)$ so that $\sF'(\cO_p)=\cO_y$ for some closed point $p$. The statement follows from Theorem \ref{thm:pointtoglobal}.
\end{proof}

\section{An Alternative approach to Corollary \ref{cor:pgneq0bir}}

In this section, we provide an alternative approach to Corollary \ref{cor:pgneq0bir} without using Theorem \ref{thm:pointtoglobal}. In this case, with a simpler proof we can deduce a more precise statement on which open sets are isomorphic.

\begin{Thm}\label{thm:mainYminusBsKtoX}
 Let $X$ and $Y$ be smooth projective varieties of the same dimension $n$. Assume that $K_{X}$ is ample, $H^0(X,K_X)\neq 0$ and there is a fully faithful functor $\sF:D^{b}(X)\hookrightarrow D^{b}(Y)$, then there is an embedding $$f\colon Y\setminus \operatorname{Bs}\vert K_{Y}\vert \hookrightarrow X.$$
 In particular, $X$ is birational to $Y$.
\end{Thm}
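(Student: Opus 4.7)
The strategy is to reuse the existence argument from Corollary~\ref{cor:pgneq0bir} to produce a set-theoretic map $Y \setminus \mathrm{Bs}|K_Y| \to X$, and then upgrade it to an open immersion using the Fourier--Mukai kernel of the left adjoint $\sF^*$. Throughout we may assume $X$ and $Y$ are irreducible, as the general case reduces to this component by component.

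\emph{Set-theoretic construction.} Let $Y^0 := Y \setminus \mathrm{Bs}|K_Y|$. As in the proof of Corollary~\ref{cor:pgneq0bir}, the support of $^\perp\!(\sF(\Db(X)))$ is contained in $\mathrm{Bs}|K_Y|$, so $\cO_y \in \sF(\Db(X))$ for every $y \in Y^0$. Combining \cite[Lemma 2.7]{Kuz:fractionalCY} and \cite[Proposition 2.2]{Bondal-Orlov} with the ampleness of $K_X$, we get a unique closed point $p(y) \in X$ and integer $m(y) \in \Z$ with $\sF^*(\cO_y) \cong \cO_{p(y)}[m(y)]$. If $p(y_1) = p(y_2)$, then $\cO_{y_1}$ and $\cO_{y_2}$ would differ by a shift in $\Db(Y)$; since both are sheaves, the shift must be trivial and $y_1 = y_2$. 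Hence the assignment $f_0 \colon Y^0 \to X$, $y \mapsto p(y)$, is set-theoretically injective.

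\emph{Morphism structure via the Fourier--Mukai kernel.} By Orlov's representability theorem, $\sF^* \cong \Phi_\cR$ for some $\cR \in \Db(X \times Y)$. Upper semi-continuity of $y \mapsto \dim_k \mathcal H^i(\cR \otimes^L_{\cO_Y} k(y))$ shows $\{y \in Y^0 : m(y) = i\}$ is closed in $Y^0$ for every integer $i$. Bounded amplitude of $\cR$ together with the connectedness of the irreducible open $Y^0$ forces these finitely many disjoint closed subsets to have exactly one non-empty piece, so $m \equiv m_0$ is constant. Set $\cR' := \cR[m_0]|_{X \times Y^0}$, so that $\cR'|^L_{X \times \{y\}} \cong \cO_{p(y)}$ is concentrated in degree $0$ for every $y \in Y^0$. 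The base-change spectral sequence $E_2^{p,q} = \mathrm{Tor}^{\cO_{Y^0}}_{-p}(\mathcal H^q(\cR'), k(y))$ shows that the top and bottom non-vanishing cohomology sheaves $\mathcal H^q(\cR')$ survive to the fiber, so fiber vanishing off degree $0$ together with Nakayama forces $\mathcal H^q(\cR') = 0$ for $q \neq 0$; pointwise vanishing of the higher $\mathrm{Tor}$-contributions similarly yields flatness of $\cR'$ over $Y^0$. The resulting flat family of length-$1$ subschemes of $X$ is classified by a morphism $f \colon Y^0 \to \Hilb^1(X) = X$ with $f(y) = p(y)$.

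\emph{Open immersion and conclusion.} The morphism $f$ is injective on closed points, hence quasi-finite, and its generic fiber carries a single geometric point. Since $Y^0$ and $X$ are smooth and irreducible of the same dimension $n$, $f$ is dominant of degree $1$, hence birational. Zariski's Main Theorem factors the quasi-finite separated morphism $f$ as an open immersion followed by a finite morphism $g$ to $X$; but $g$ is finite and birational onto the normal target $X$, hence an isomorphism. Therefore $f$ is an open immersion $Y \setminus \mathrm{Bs}|K_Y| \hookrightarrow X$, and $X$ is birational to $Y$.

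The main technical obstacle is the sheaf-and-flatness conclusion in the middle step: deducing from the pointwise description of the fibers that $\cR'$ is an honest flat coherent sheaf requires the full base-change spectral sequence to collapse uniformly, and this depends crucially on the constancy of $m(y)$ established from the irreducibility of $Y^0$.
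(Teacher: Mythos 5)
Your proposal is correct in substance and reaches the same conclusion, but by a genuinely different technical route in the two main steps. For the constancy of $m(y)$, you use upper semicontinuity of the fiberwise cohomology dimensions together with the fact that the sets $\{y : m(y) = i\}$ are finitely many, disjoint, closed, and cover the irreducible $Y^0$; the paper instead projects the supports $\supp(\H^q(P_R))$ to $U$ and runs the same base-change spectral sequence, reaching an equivalent conclusion. For promoting the set map to a morphism, you invoke the standard result that a complex with sheaf-concentrated derived fibers is itself a sheaf flat over the base (Bridgeland--Maciocia / Huybrechts, Lemma~3.31) and classify the resulting family via $\Hilb^1(X)\cong X$; the paper instead analyzes $\supp(P_R|_{U\times X})$, shows the projection to $U$ is finite of degree one hence an isomorphism, and takes the other projection. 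Finally, you dispatch the open-immersion claim in one stroke via Zariski's Main Theorem applied to the quasi-finite birational morphism into the normal target, whereas the paper constructs a local inverse $h_0$ near each point of $\operatorname{Im}(f)$ and glues; your ending is noticeably shorter. Two small caveats worth flagging: (1) your phrase ``the top and bottom non-vanishing cohomology sheaves survive to the fiber'' is not quite the correct one-liner---the bottom does not automatically survive; the standard proof first kills $\H^q$ for $q>0$, then uses vanishing of $\Tor_1(\H^0(\cR'),k(y))$ to get flatness of $\H^0(\cR')$, then uses that flatness in a descending induction to kill $\H^q$ for $q<0$. The conclusion is right (it is the cited lemma), but the sketch conflates steps. (2) The appeal to $\Hilb^1(X)$ presupposes that $\cR'$ is naturally a flat family of \emph{quotients} of $\cO_X$, which needs a word of justification (locally $\cR'$ is cyclic since each fiber is, and local classifying maps to $X$ glue); alternatively one can avoid $\Hilb$ entirely as the paper does, by observing that $p_{Y^0}:\supp(\cR')_{\mathrm{red}}\to Y^0$ is finite of degree one hence an isomorphism, and composing with $p_X$.
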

\begin{proof}
For every closed point $y\in U=Y\setminus \operatorname{Bs}\vert K_{Y}\vert$, by the same argument as that in Corollary \ref{cor:pgneq0bir}, there exists a point like object $E_{y}\in D^{b}(X)$ such that $\sF(E_{y})=\cO_y$ and $E_{y}\cong \cO_x[m_{y}]$ for some closed point $x$ and some integer $m_{y}$. This induces a set-theoretic map 
$$f\colon U\rightarrow X\;\;\;\colon\;\;\; y\mapsto x.$$
We show that the integer $m_y$ is independent of closed point $y$.

By \cite[Theorem 1.1.]{BondalVdBergh:Generators} and \cite[Theorem 2.2]{Orlov:representability}, the functor $\sF$ and its adjoint are of Fourier--Mukai type. Denote their kernel as $P$ and $P_R$ respectively.

We write $j_y: X\rightarrow X\times Y,\quad x\mapsto (y,x)$. There is a commutative diagram,
$$\xymatrix{y\times X\ar[d]\ar[r]^{j_y}& Y\times X\ar[d]\\
y\ar[r]& Y}$$
Identify $y\times X$ with $X$, we have
$$\cO_{f(y)}[m_y]\cong E_{y}\cong \mathbb{L}j_y^{\ast}P_{R}.$$
Recall that $\mathbb{L}j_y^{\ast}P\cong j_y^{-1}P\otimes^{\mathbb{L}}_{j_y^{-1}\mathcal{O}_{X\times Y}}\mathcal{O}_{X}$, there is a spectral sequence
$$E^{r,s}_{2}=\mathrm{Tor}_{-r}(\H^{s}(j_y^{-1}P),\mathcal{O}_{X})\Rightarrow \mathrm{Tor}_{-(r+s)}(j_y^{-1}P,\mathcal{O}_{X}).$$
We write $Z_{q}=\supp(\H^{q}(P_{R}))$, then $\supp(P_{R})=\bigcup_{q\in\Z} Z_{q}$ and
$$\supp(P_{R})\vert_{U\times X}=\{(y,f(y))\vert y\in U\}.$$
Consider the projection map
$$p_{U}:U\times X\rightarrow U.$$
Let $Z'_{q}=p_{U}(Z_{q}\cap U\times Y)$ which are closed subsets. Then $U=\bigcup_{q\in\Z} Z'_{q}$.
Since $U$ is irreducible, we have $Z'_{q}=U$ or $Z'_{q}=\emptyset$. Observe that $j^{-1}(\H^{q}(P_{R}))$ and $\mathcal{O}_{X}$ are sheaves, hence $E^{r,s}_{2}=0$ for $r\leq -1$. Choose the minimal $q$ with $Z'_{q}=U$, then the second page $E^{0,q}_{2}$ survives to $E^{0.q}_{\infty}$. Note that $q$ is independent of the point $y$. Thus we must have
$$\sF^*(\cO_y)\cong \cO_{f(y)}[q].$$
Finally, replacing $\sF$ with $\sF[q]$, then the right adjoint $\sF^*[-q]$ maps $\cO_y$ to $\cO_{f(y)}$. So we may assume $q=0$. Consider the diagram 
$$\xymatrix{&\supp(P_{R}\vert_{U\times X})\ar[dl]_{p_{U}}\ar[dr]^{p_{X}}&\\
U&&X}$$
Since $p_{U}$ is proper and quasi-finite, hence it is finite of degree one. Therefore $p_{U}$ is an isomorphism. Thus, $f=p_{X}\circ p^{-1}_{U}: U\rightarrow X$ is a morphism of algebraic varieties. 
\begin{Prop}\label{injective}
 $f$ is injective.
\end{Prop}
\begin{proof}
   Take two closed point $y_{1},y_{2}\in U$. Suppose $f(y_{1})=f(y_{2})$, then $\sF^*(\cO_{y_{1}})=\sF^*(\cO_{y_{2}})$. Apply the functor $\sF$ to both sides, we have
   $\cO_{y_{1}}\cong \cO_{y_{2}}$ since $\cO_{y_{1}}$ and $\cO_{y_{2}}$ are in the image of functor $\sF$. Thus $y_{1}=y_{2}$.
\end{proof}
We then prove that $f$ is an open embedding by dividing the proof into lemmas, which implies $X$ is birational to $Y$. 
\begin{Lem}\label{proj}
 The projection $\supp(P)\rightarrow X$ is surjective.  
\end{Lem}
\begin{proof}
    For every closed point $x$, we write $j_{x}': Y\hookrightarrow X\times Y$ as the embedding $y\mapsto (x,y)$.
    Suppose the projection is not surjective, then there exists closed point $x$ such that $j'_{x}(Y)\cap\supp(P)=\emptyset$, therefore $\mathbb{L}j'^{\ast}_{x}P=0$. Thus $\sF(\cO_x)\cong0$, contradict that $\sF$ is fully faithful.
\end{proof}
\begin{Lem}\label{pointobject}\cite[Lemma 4.5]{FourierMuaitransformsinaglebraicgeometry06}
   Let $Z$ be a smooth projective variety. Let $E$ be any object in $D^{b}(Z)$ such that 
   $$Hom(E,E[t])=\left\{
   \begin{aligned}
       & k, t=0\\
       & 0, t\leq -1
   \end{aligned}\right.$$
   Suppose the support of $E$ is zero-dimensional, then $E$ is of the form $\cO_z[m]$ for some $z$ and integer $m$.
\end{Lem}
\begin{Lem}\label{supportkernel}
  $\supp(P)=\supp(P_{R})$. 
\end{Lem}
\begin{proof}
   Since $P_{R}\cong P^{\vee}\otimes p^{\ast}_{X}(K_{X})$, see for example \cite[Proposition 5.9]{FourierMuaitransformsinaglebraicgeometry06}, hence $\supp(P_{R})=\supp(P^{\vee})=\supp(P)$. 
\end{proof}
Let $Z_{1}=\overline{\supp(P_{R}\vert_{U\times X})}$, according to Lemma \ref{supportkernel}, we have $\supp(P)=Z_{1}\cup(\bigcup_{j\neq 1} Z_{j})$, where $Z_{j}$ are other irreducible components. According to Lemma \ref{proj}, the projection $\supp(P)\rightarrow X$ is surjective. 
\begin{Lem}\label{Z1sujective}
  The projection $p_{X}:Z_{1}\rightarrow X$ is surjective.   
\end{Lem}
\begin{proof}
    First, the image of $Z_{j}\vert_{j\neq 1}$ is disjoint with image of $f$. Otherwise, let $x'\in \operatorname{Image}(f)\cap  {Z_{j}}$, and $x'=f(y')$. Since $\sF^*(\cO_{y'})\cong \cO_{x'}$ and $\cO_{y'}$ is in the image of functor $\sF$, we have $\sF(\cO_{x'})\cong \cO_{y'}$. Therefore, the fiber of projection $\supp(P)\rightarrow X$ over $x'$ is $y'$. Hence $(y',x')\in Z_{j}\cap U\times X\neq \emptyset$. Consider the projection to $U$, by Lemma \ref{supportkernel} again, the fiber of every point $y\in U$ is $(y,f(y))$, this implies $Z_{j}\vert_{U\times X}\subset Z_{1}\vert_{U\times X}$, and hence $Z_{j}\subset Z_{1}$, a contradiction.
    
    It follows that we have $X=f(Z_{1})\cup(\bigcup_{j\neq 1} f(Z_{j}))$ by Lemma \ref{proj}. Since $X$ is irreducible, and all $f(Z_{j})$ are proper closed subsets, $X=f(Z_{1})$.
\end{proof}

Take $x_{0}\in \operatorname{Im}(f)$. Let $x_{0}=f(y_{0})$, then $\sF^*(\cO_{y_0})\cong \cO_{x_{0}}$. Hence $\sF(\cO_{x_{0}})\cong \cO_{y_{0}}$. Therefore, the fiber of the projection $Z_{1}\rightarrow X$ over $x_{0}$ is $y_{0}$. Then by Lemma \ref{Z1sujective}, there exists an open neighborhood $V_{x_{o}}\subset X$ such that the fiber over the closed point
in $V_{x_{0}}$ is zero-dimensional. We can choose $V_{x_{0}}$ that is disjoint with the image of $Z_{j}\vert_{j\neq 1}$. The support of $\sF(\cO_{x})$ is zero-dimensional for $x\in V_{x_{0}}$. According to Lemma \ref{pointobject}, $\sF(\cO_x$ is of the form $\cO_{y_{x}}[m_{x}]$. Again these $m_{x}$'s are independent of  $x$ locally, hence $m_{x}=m_{x_{0}}=0$. Write $Z_{1}=Z_{1}\vert_{U\times X}\cup Z'_{1}$, where $Z'_{1}$ is a closed subset of $Z_{1}$. Since the image of $Z'_{1}$ under projection $Z_{1}\rightarrow X$ does not contain $x_{0}$, hence by shrinking the open neighborhood $V_{x_{0}}$, we can assume $f(Z'_{1})\cap V_{x_{0}}=\emptyset$. Thus the fiber of $V_{x_{0}}$ is in $U\times X$.

Due to the same reason, there is a morphism $h_{0}:V_{x_{0}} \rightarrow U$. Since $\sF^*(\cO_{h(x)})\cong \sF^*\circ \sF(\cO_{x})\cong \cO_x$ for $x\in V_{x_{0}}$, therefore
$V_{x_{0}}\subset \operatorname{Image}(f)$. In particular, $\operatorname{Image}(f)$ is an open subset of $X$. Finally by gluing the morphism $h_{0}$, we obtain morphism 
$$h: \operatorname{Image}(f)\rightarrow U.$$
By construction, the composition $f\circ h=\id$. By Proposition \ref{injective}, $f$ is injective. Hence  $f$ is an isomorphism into the image with inverse $h$.
\end{proof}                                                                                                     
\bibliography{all}                      
\bibliographystyle{halpha}  
\end{document}